\numberwithin{equation}{section} \theoremstyle{plain}
\newcommand{\Complex}{\mathbb C}
\newcommand{\Real}{\mathbb R}
\newcommand{\ddbar}{\overline\partial}
\newcommand{\pr}{\partial}
\newcommand{\ol}{\overline}
\newcommand{\Td}{\widetilde}
\newcommand{\norm}[1]{\left\Vert#1\right\Vert}
\newcommand{\set}[1]{\left\{#1\right\}}
\newcommand{\To}{\rightarrow}
\DeclareMathOperator{\Ker}{Ker}
\newtheorem{theorem}{Theorem}[section]
\newtheorem{lemma}[theorem]{Lemma}
\newtheorem{proposition}[theorem]{Proposition}
\newtheorem{definition}[theorem]{Definition}
\theoremstyle{definition}
\theoremstyle{remark}
\numberwithin{equation}{section}
\newcommand{\abs}[1]{\lvert#1\rvert}
\begin{document}

\title[The anomaly formula of the analytic torsion on CR manifolds with $S^1$ action]
{The anomaly formula of the analytic torsion on CR manifolds with $S^1$ action}

\thanks{The author was supported by Taiwan Ministry of Science of Technology project 105-2115-M-008-008-MY2. The author also would like to express his gratitude to Dr. Chin-Yu Hsiao for very useful discussion in this work.}

\author{Rung-Tzung Huang}

\address{Department of Mathematics, National Central University, Chung-Li 320, Taiwan}

\email{rthuang@math.ncu.edu.tw}

\keywords{determinant, Ray-Singer torsion, CR manifolds} 
\subjclass[2000]{Primary: 58J52, 58J28; Secondary: 57Q10}

\begin{abstract}
Let $X$ be a compact connected strongly pseudoconvex CR manifold of dimension $2n+1, n\ge 1$ with a transversal CR $S^1$-action on $X$. In this paper we introduce the Quillen metric on the determinant line of  the Fourier components of the Kohn-Rossi cohomology on $X$ with respect to the $S^1$-action. We study the behavior of the Quillen metric under the change of the metrics on the manifold $X$ and on the vector bundle over $X$. We obtain an anomaly formula for the Quillen metric on $X$ with respect to the $S^1$-action. 
\end{abstract}

\maketitle

\tableofcontents

\section{Introduction}

In \cite{RS2}, Ray and Singer introduced the holomorphic analytic torsion for $\overline{\partial}$-complex on complex manifolds as the complex analogue of the analytic torsion for flat vector bundles over Riemannian manfilds \cite{RS1}. Let $F$ be a Hermitian vector bundle over a compact Hermitian complex manifold $M$. Let $\lambda(F) = \otimes_q \left( \det H^q(M,F) \right)^{(-1)^{q+1}}$ be the dual of the determinant line of the Dolbeault cohomology groups of $M$ with values on $F$. In \cite{Qu}, Quillen defined a metric, the product of the $L^2$-metric on $\lambda(F)$ by the holomorphic analytic torsion, on $\lambda(F)$ when $M$ is a Riemann surface. In \cite{BGS3}, Bismut, Gillet and Soul\'{e} extended it to complex manifolds. By using probability method, they obtained the anomaly formulas for the Quillen metrics when the holomorphic bundle is endowed with Hermitian metrics and the base manifold is assumed to be K\"{a}hler. Recall that the anomaly formulas tell us the variation of the Quillen metrics with respect to the change of the Hermitian metrics on $TX$ and $F$. Note that, in \cite{B}, Berman considered high powers of a holomorphic line bundle over a complex manifold, where the metric of the base manifold is not necessarily K\"{a}hler, and obtained an asymptotic anomaly formula for the Quillen metric by using the Bergman type kernels.

In orbifold geometry, we have Kawasaki's Hirzebruch-Riemann-Roch formula~\cite{Ka79} and also general index theorem~\cite{PV}. Ma~\cite{M} first introduced analytic torsion on orbifolds and obtained anomaly and immersion formulas for Quillen metrics in the case of orbifolds,
which is expressed explicitly in the form of characteristic and secondary characteristic classes on orbifolds. Ma's results should play an important role toward establishing an arithmetic version of the Kawasaki-Riemann-Roch theorem in Arakelov geometry.

CR geometry is an important subject in several complex variables and is closely related to various research areas. To study further geometric problems for CR manifolds, it is important to know the corresponding heat kernel asymptotics and to have (local) index formula and the concept of analytic torsion. The difficulty comes from the fact that the Kohn Laplacian is not hypoelliptic. Thus, we should consider such problems on some class of CR manifolds. 
It turns out that Kohn's $\Box_b$ operator on CR manifolds with $S^1$ action including Sasakian manifolds of interest in String Theory (see~\cite{OV07}) is a natural one of geometric significance among those transversally elliptic operators initiated by Atiyah and Singer (see~\cite{Hsiao14},~\cite{HL1},~\cite{HsiaoLi15} and~\cite{CHT}). In \cite{HH}, Hsiao and the author considered a compact connected strongly pseudoconvex CR manifold $X$ and we introduced the Fourier components of the Ray-Singer analytic torsion on $X$ with respect to a transversal CR $S^1$-action. We established an asymptotic formula for the Fourier components of the analytic torsion with respect to the $S^1$-action. This generalizes the aymptotic formula of Bismut and Vasserot, \cite{BV}, on the holomorphic Ray-Singer torsion associated with high powers of a positive line bundle to strongly pseudoconvex CR manifolds with a transversal CR $S^1$-action. 

In a recent preprint, \cite{F}, Finski studied the general formula of the asymptotic expansion of Ray-Singer analytic torsion associated with increasing powers of a given positive line bundle and then the general asymptotic expansion of Ray-Singer analytic torsion for an orbifold and described a connection between the asymptotic formula of Ray-Singer analytic torsion for an orbifold in \cite{F} and our result in \cite{HH}. In another recent work, \cite{P0, P}, Puchol gave an asymptotic formula for the holomorphic analytic torsion forms of a fibration associated with increasing powers of a given positive line bundle which is the family version of the results of Bimsut and Vasserot on the asymptotic of the holomorphic torsion. 

In \cite{CM}, Cappell and Miller extended the holomorphic analytic torsion to coupling with an arbitrary holomorphic bundle with a compatible connection of type $(1, 1)$. They used certain not necessarily self-adjoint Laplacian to define the analytic torsion and, hence, the analytic torsion is complex-valued. In \cite{LY}, Liu and Yu established an explicit expression of the anomaly formula for the Cappell-Miller holomorphic torsion for K\"{a}hler manifolds by using heat kernel methods. In \cite{S}, Su proved an asymptotic formula for the Cappell-Miller holomorphic torsion associated with a high tensor power of a positive line bundle and a holomorphic vector bundle. 

In \cite{S0}, Su extended the holomorphic $L^2$ torsion introduced by Carey, Farber and Mathai in \cite{CFM} to the case without determinant class condition. He derived the anomaly formula for the holomorphic $L^2$ torsion under the change of the metrics. In the end, he studied the asymptotics of the holomorphic $L^2$ torsion associated with an increasing power of a positive line bundle.

In this paper we introduce the Quillen metric on the determinant line of the Fourier components of the Kohn-Rossi cohomology on $X$ with respect to a transversal CR $S^1$-action. We study the behavior of the Quillen metric under the change of the metrics on the manifold $X$ and on the vector bundles over $X$. We obtain an anomaly formula for the Quillen metric on $X$ with respect to the $S^1$-action, cf. Theorem \ref{T:main}, by using the heat kernel methods of \cite{CHT, HH, LY}.

\subsection{Motivation}
To motivate our approach, let's come back to complex geometry case. Let $M$ be a compact complex manifold of dimension $n$. Let $\langle \, \cdot \, , \, \cdot \, \rangle$ be a Hermitian metric on $\mathbb{C}TM$ and let $(F,h^F)\To M$ be a holomorpic vector bundle over $M$, where $h^F$ denotes a Hermitian fiber metric on $F$. Denote by $T^{*0,\bullet}M$ the vector bundle of $(0, \bullet)$ forms on $M$. Let $\Box_F$ be the Kodaira Laplacian with values in $T^{*0,\bullet}M\otimes F$ and $e^{-t \Box_F}$ be the associated heat operator. Denote by $\theta_{F}(z)$ the $\zeta$-function 
\[
\theta_{F}(z)  = - \mathcal{M} \left\lbrack \operatorname{STr}  \lbrack N e^{-t \Box_F} P^\perp  \rbrack \right\rbrack = -\operatorname{STr} \lbrack N(\Box_F)^{-z}  P^\perp \rbrack.
\] 
Here $N$ is the number operator on $T^{*0,\bullet}M$, $\operatorname{STr}$ denotes the super trace , $P^\perp$ is the orthogonal projection onto $({\rm Ker\,}\Box_F)^\perp$ and $\mathcal{M}$ denotes the Mellin transformation, cf. Definition~\ref{d-gue160313}. It is well-known that the $\zeta$-function has meromorphic extension to the whole complex plane. In particular, it is holomorphic at $z=0$. 


\begin{definition}
The analytic torsion associated to the holomorphic vector bundle $F$ over the complex manifold $M$ is defined by $\exp ( -\frac{1}{2} \theta_{F}'(0) )$. 
\end{definition}

For a finite dimensional vector space $V$, we set 
$$
\det V := \wedge^{\text{max}}V.
$$ 
We then denote by 
$$
(\det V)^{-1}:= (\det V)^*,
$$ 
the dual line of $\det V$. For $q=0,1,\cdots, n$, let $H^q(M,F)$ be the $q$-th $\overline{\partial}$-Dolbeault cohomology group with value in $F$. Denote by 
$$
H^\bullet(M, F) = \oplus_{q=0}^n H^q(M, F).
$$ Then
\[
\det H^\bullet(M,F) \, = \, \otimes_{q=0}^n \left( \det H^q(M,F)  \right)^{(-1)^q}
\]
is the determinant line of the Dolbeault cohomology $H^\bullet(M,F)$. We define 
\begin{equation}
\lambda(F) \, = \,  \left( \det H^\bullet(M,F)   \right)^{-1} \nonumber
\end{equation}
be the dual of $\det H^\bullet(M,F)$. By the Hodge theorem, the cohomology group $H^q(M,F)$ is isomorphic to the kernel of the Dolbeault Laplacian 
$$
\Box^{(q)}_{F} \, := \, \overline{\partial}^F\overline{\partial}^{F,*} + \overline{\partial}^{F,*}\overline{\partial}^F \,: \, \Omega^{0, q}(M, F) \to \Omega^{0, q}(M, F),
$$
where $\overline{\partial}^{F,*}$ denotes the adjoint of $\overline{\partial}^F$ with respect to the metrics $\langle \, \cdot \, , \, \cdot \, \rangle$ and $h^F$.
The metrics $\langle \, \cdot \, \, \cdot \, \rangle$ and $h^{F}$ induce a canonical $L^2$-metric $h^{H^\bullet(M,F)}$ on $H^\bullet(M,F)$. Let $|\cdot|_{\lambda(F)}$ be the $L^2$-metric on $\lambda(F)$ induced by $h^{H^\bullet(M,F)}$.

\begin{definition}\label{D:5.5.5-0}
The Quillen metric $\| \cdot  \|_{{\lambda}(F)}$ on $\det H^\bullet(M,F)$ is defined as
\[
\| \cdot  \|_{\lambda(F)} \, := \,  |\cdot|_{\lambda(F)} \cdot \exp ( -\frac{1}{2} \theta_{F}'(0)  ).
\]
\end{definition}

Now we recall the anomaly formula of Bismut, Gillet and Soul\'{e} for the Quillen metric on $\lambda(F)$. Let $\langle \, \cdot \, , \, \cdot \, \rangle'$ and ${h'}^F$ be another couple of  Hermtian metrics on $\mathbb{C}TM$ and on $F$, respectively. Let $\| \cdot  \|_{\lambda(F)}$ be the Quillen metric on $\lambda(F)$ associated to the metrics $\langle \, \cdot \, , \, \cdot \, \rangle$ and $h^F$ and let $\| \cdot  \|'_{\lambda(F)}$ be the Quillen metric on $\lambda(F)$ associated to the metrics $\langle \, \cdot \, , \, \cdot \, \rangle'$ and ${h'}^F$. Let $\nabla^{TM}$ and $\nabla'^{TM}$ be the Levit-Civita connections on $TM$ with respect to the metrics $\langle\, \cdot \, , \, \cdot \, \rangle$ and $\langle \, \cdot \, , \, \cdot \, \rangle'$ on $\mathbb{C}TM$, respectively. Let $P_{T^{1,0}M}$ be the natural projection from $\mathbb{C}TM$ onto $T^{1,0}M$. Then, 
$$
\nabla^{T^{1,0}M}\, := \, P_{T^{1,0}M}\nabla^{TM}
$$ 
and
$$
\nabla'^{T^{1,0}M}\, := \, P_{T^{1,0}M}\nabla'^{TM}
$$ 
are connections on $T^{1,0}M$. Let $\nabla^F$ and $\nabla'^F$ be the connections on $F$ induced by the Hermitian metrics $h^F$ and ${h'}^F$ on $F$, respectively. We denote by 
$$
\widetilde{\operatorname{Td}}(\nabla^{T^{1,0}M}, \nabla'^{T^{1,0}M}, T^{1,0}M) \quad \text{and} \quad 
\widetilde{\operatorname{ch}}(\nabla^{F}, \nabla'^{F}, F)
$$
the Bott-Chern classes, cf. \cite{BGS1}. We also denote by $\operatorname{Td}(\nabla'^{T^{1,0}M}, T^{1,0}M)$ the Todd class and $\operatorname{ch}(\nabla^F, F)$ the Chern character. We now assume that the metrics $\langle \, \cdot \, , \, \cdot \, \rangle$ and $\langle \, \cdot \, , \, \cdot \, \rangle'$ are K\"{a}hler. The anomaly formula of Bismut, Gillet and Soul\'{e} for Quillen metric on $\lambda(F)$, cf. \cite[Theorem 1.23]{BGS3}, is the following:
\begin{eqnarray}\label{E:BGSformula}
\log \left( \,  \frac{|| \cdot ||'_{\lambda(F)} }{|| \cdot ||_{\lambda(F)} }   \, \right)  \, = \,   \int_M \widetilde{\operatorname{Td}}(\nabla^{T^{1,0}M}, \nabla'^{T^{1,0}M}, T^{1,0}M) \wedge \operatorname{ch}(\nabla^F, F)  \nonumber \\
 +  \int_M \operatorname{Td}(\nabla'^{T^{1,0}M}, T^{1,0}M) \wedge \widetilde{\operatorname{ch}}(\nabla^{F}, \nabla'^{F}, F).
\end{eqnarray}

Let $(L,h^L)\To M$ be a holomorpic line bundle over $M$, where $h^L$ denotes a Hermitian fiber metric of $L$. Let $(L^*,h^{L^*})\To M$ be the dual bundle of $(L,h^L)$ and put 
$$
X=\set{v\in L^*;\, \abs{v}^2_{h^{L^*}}=1}.
$$ 
We call $X$ the circle bundle of $(L^*,h^{L^*})$. It is clear that $X$ is a compact CR manifold of dimension $2n+1$. Given a local holomorphic frame $s$ of $L$ on an open subset $U\subset M$, we define the associated local weight of $h^L$ by
$$
\abs{s(z)}^2_{h^L}=e^{-2\phi(z)}, \quad \phi\in C^\infty(U, \Real).
$$ 
The CR manifold $X$ is equipped with a natural  $S^1$ action. Locally, $X$ can be represented in local holomorphic coordinates $(z,\lambda)\in\mathbb C^{n+1}$, where $\lambda$ is the fiber coordinate, as the set of all $(z,\lambda)$ such that 
$$
\abs{\lambda}^2e^{2\phi(z)}=1,
$$
where $\phi$ is a local weight of $h^L$. The $S^1$ action on $X$ is given by 
$$
e^{i\theta}\circ (z,\lambda)=(z,e^{i\theta}\lambda), \quad e^{i\theta}\in S^1, \ (z,\lambda)\in X.
$$ 
Let $T\in C^\infty(X,TX)$ be the real vector field induced by the $S^1$ action, that is, 
$$
Tu=\frac{\pr}{\pr\theta}(u(e^{i\theta}\circ x))|_{\theta=0}, \quad u \in C^\infty(X).
$$ 
We can check that 
$$
[T,C^\infty(X,T^{1,0}X)]\subset C^\infty(X,T^{1,0}X)
$$
and 
$$
\Complex T(x)\oplus T^{1,0}_xX\oplus T^{0,1}_xX=\Complex T_xX
$$
(we say that the $S^1$ action is CR and transversal). For every $m\in\mathbb Z$, put 
\[\begin{split}
\Omega^{0,\bullet}_m(X):&=\set{u\in\Omega^{0,\bullet}(X);\, Tu=imu}\\
&=\set{u\in\Omega^{0,\bullet}(X);\, u(e^{i\theta}\circ x)=e^{im\theta}u(x), \forall\theta\in[0,2\pi[}.\end{split}\]
Since 
$$
\ddbar_bT=T\ddbar_b,
$$ 
we have 
$$
\ddbar_b:\Omega^{0,\bullet}_m(X)\To\Omega^{0,\bullet}_m(X),
$$ 
where $\ddbar_b$ denotes the tangential Cauchy-Riemann operator. Let $\Omega^{0,\bullet}(M,L^m)$ be the space of smooth sections of $(0,\bullet)$ forms of $M$ with values in $L^m$, where $L^m$ is the $m$-th power of $L$. It is known that (see Theorem 1.2 in~\cite{CHT}) there is a bijection 
\begin{equation}\label{E:bijec}
A_m:\Omega^{0,\bullet}_m(X)\To\Omega^{0,\bullet}(M,L^m)
\end{equation}
 such that 
 $$
 A_m\ddbar_{b}=\ddbar A_m
 $$ 
 on $\Omega^{0,\bullet}_m(X)$. Let $\Box_m$ be the Kodaira Laplacian with values in $T^{*0,\bullet}M\otimes L^m$ and let $e^{-t\Box_m}$ be the associated heat operator. It is well-known that $e^{-t\Box_m}$ admits an asymptotic expansion as $t\To0^+$. Consider 
 $$
 B_m(t):=(A_m)^{-1}\circ e^{-t\Box_m}\circ A_m.
 $$ 
 Let 
 \[\Box_{b,m}:\Omega^{0,\bullet}_m(X)\To\Omega^{0,\bullet}_m(X)\] 
 be the Kohn Laplacian for forms with values in the $m$-th $S^1$ Fourier component and let $e^{-t\Box_{b,m}}$ be the associated heat operator.  We can check that  
\begin{equation}\label{e-gue150923bi}
 e^{-t\Box_{b,m}}=B_m(t)\circ Q_m=Q_m\circ B_m(t)\circ Q_m,
\end{equation}
where 
$$
Q_m:\Omega^{0,\bullet}(X)\To\Omega^{0,\bullet}_m(X)
$$ 
is the orthogonal projection. From the asymptotic expansion of $e^{-t\Box_m}$ and \eqref{e-gue150923bi}, it is straightforward to see that
\begin{equation}\label{e-gue151108}
e^{-t\Box_{b,m}}(x,x)\sim t^{-n}a_n(x)+t^{-n+1}a_{n-1}(x)+\cdots.
\end{equation} 
From \eqref{e-gue151108}, we can define $\exp ( -\frac{1}{2} \theta_{b,m}'(0) )$ the $m$-th Fourier component of the analytic torsion on the CR manifold $X$, where 
$$
\theta_{b,m}(z)  = - \mathcal{M} \left\lbrack \operatorname{STr}  \lbrack N e^{-t \Box_{b,m}} \Pi^\perp_m  \rbrack \right\rbrack = -\operatorname{STr} \lbrack N(\Box_{b,m})^{-z}  P^\perp \rbrack.
$$ 
Here $N$ is the number operator on $T^{*0,\bullet}X$, $\operatorname{STr}$ denotes the super trace , $\Pi^\perp_m$ is the orthogonal projection onto $({\rm Ker\,}\Box_{b,m})^\perp$ and $\mathcal{M}$ denotes the Mellin transformation, cf. Definition~\ref{d-gue160313}. It is easy to see that 
\begin{equation}\label{e-gue160501}
\theta_{b,m}'(0)=\theta_{L^m}'(0). \nonumber
\end{equation}

For each $m \in \mathbb{Z}$ and $q=0,1, \cdots, n$, we consider the cohomology group:
\[
H^q_{b,m}(X) \, := \, \frac{\operatorname{Ker} \overline{\partial}_{b,m}:\Omega^{0,q}_m(X) \to \Omega^{0,q+1}_m(X)}{\operatorname{Im}\overline{\partial}_{b,m}:\Omega^{0,q-1}_m(X) \to \Omega^{0,q}_m(X)},
\]
and call it the $m$-th Fourier components of the Kohn-Rossi cohomology group. Recall that by \eqref{E:bijec} (see also \cite[Theorem 1.2]{CHT}), for each $m \in \mathbb{Z}$ and $q=0,1, \cdots, n$, the cohomology group $H^q_{b,m}(X)$ is isomorphic to the Dolbeault cohomology group $H^q(M,L^m)$. In particular, $\dim H^q_{b,m}(X)<\infty$. Denote by 
$$
H^\bullet_{b, m}(X) = \oplus_{q=0}^n H^q_{b,m}(X).
$$ 
Then
\[
\det H^\bullet_{b,m}(X) = \otimes_{q=0}^n \left( \det H^q_{b,m}(X)  \right)^{(-1)^q}
\]
is the determinant line of the cohomology $H^\bullet_{b,m}(X)$. We define 
\begin{equation}
\lambda_{b,m} \, = \, \left( \det H^\bullet_{b,m}(X)  \right)^{-1}. \nonumber
\end{equation}

Let $\langle\,\cdot\, | \,\cdot\,\rangle$ be the rigid Hermitian metric (see Definition~\ref{d-gue150514f}) on $\Complex TX$ given by, in local holomorphic coordinates $(z,\lambda)$, 
\[
\langle\,\frac{\pr}{\pr z_j}+i\frac{\pr\varphi}{\pr z_j}(z)\frac{\pr}{\pr\theta}\,|\,\frac{\pr}{\pr z_k}+i\frac{\pr\varphi}{\pr z_k}(z)\frac{\pr}{\pr\theta}\,\rangle \,  =  \, \langle\,\frac{\pr}{\pr z_j}\,,\,\frac{\pr}{\pr z_k}\,\rangle,\ \ j,k=1,2,\ldots,n.
\]
The metric $\langle \, \cdot \, |\, \cdot \, \rangle$ induces a canonical $L^2$-metric $h^{H^\bullet_{b,m}(X)}$ on $H^\bullet_{b,m}(X)$. Let $|\cdot|_{\lambda_{b,m}}$ be the $L^2$-metric on $\lambda_{b,m}$ induced by $h^{H^\bullet_{b,m}(X)}$. Fix $m \in \mathbb{Z}$. The Quillen metric $\| \cdot  \|_{\lambda_{b, m}}$ on $\det H^\bullet_{b,m}(X)$ is defined as
\[
\| \cdot  \|_{\lambda_{b, m}} \, := \,  |\cdot|_{\lambda_{b,m}} \cdot \exp ( -\frac{1}{2} \theta_{b,m}'(0)  ).
\]

 We now fix the Hermitian fiber metric $h^L$ on $L$ and, hence, the induced Hermitian metric $h^{L^m}$ on $L^m$ is also fixed. We assume that the metrics $\langle \, \cdot \, , \, \cdot \, \rangle$ and $\langle \, \cdot \, , \, \cdot \, \rangle'$ are K\"{a}hler. For the case of circle bundle over a compact complex manifold, the anomaly formula of Bismut, Gillet and Soul\'{e} for Quillen metric on $\lambda(L^m)$ over $M$ (see \eqref{E:BGSformula}) tells us:
\begin{eqnarray}\label{E:BGSformula1}
\log \left( \,  \frac{|| \cdot ||'_{\lambda(L^m)} }{|| \cdot ||_{\lambda(L^m)} }   \, \right)  \, = \,   \int_M \widetilde{\operatorname{Td}}(\nabla^{T^{1,0}M}, \nabla'^{T^{1,0}M}, T^{1,0}M) \wedge \operatorname{ch}(\nabla^{L^m}, L^m).  
\end{eqnarray}

Let $\nabla^{TX}$ and $\nabla'^{TX}$ be the Levit-Civita connections on $TX$ with respect to two different rigid Hermitian metrics $\langle\, \cdot \, | \, \cdot \, \rangle$ and $\langle \, \cdot \, | \, \cdot \, \rangle'$ on $\mathbb{C}TX$, respectively. Let $P_{T^{1,0}X}$ be the natural projection from $\mathbb{C}TX$ onto $T^{1,0}X$. Then, 
$$
\nabla^{T^{1,0}X}\, := \, P_{T^{1,0}X}\nabla^{TX} 
$$
and
$$
\quad \nabla'^{T^{1,0}X}\, := \, P_{T^{1,0}X}\nabla'^{TX}
$$ 
are connections on $T^{1,0}X$. We denote by 
$
\widetilde{\operatorname{Td}}_{b}(\nabla^{T^{1,0}X}, \nabla'^{T^{1,0}X}, T^{1,0}X)$
the tangential Bott-Chern class, cf. Subsection \ref{S:ts}.
We denote by $|| \cdot ||_{\lambda_{b,m}}$ and $|| \cdot ||_{\lambda_{b,m}}'$ the Quillen metrics on $\det H^\bullet_{b,m}(X)$ with respect to the rigid Hermitian metrics $\langle\, \cdot \, | \, \cdot \, \rangle$ and $\langle \, \cdot \, | \, \cdot \, \rangle'$, respectively. We can now reformulate \eqref{E:BGSformula1} in terms of geometric objects on $X$:
\begin{eqnarray}\label{E:main}
\log \left( \,  \frac{|| \cdot ||'_{\lambda_{b,m}} }{|| \cdot ||_{\lambda_{b,m}} }   \, \right)  \, = \, \frac{1}{2\pi}  \int_X \widetilde{\operatorname{Td}}_{b}(\nabla^{T^{1,0}X}, \nabla'^{T^{1,0}X}, T^{1,0}X) \wedge e^{-m\frac{d\omega_0}{2\pi}} \wedge \omega_0, \nonumber
\end{eqnarray}
where $e^{-m\frac{d\omega_0}{2\pi}}$ denotes the Chern polynomial of the Levi curvature, cf. \eqref{E:leviform2}, and $\omega_0$ is the unique one form given by \eqref{E:global1form}.

The purpose of this paper is to establish the anomaly formula on any abstract strongly pseudoconvex CR manifolds with a transversal CR locally free $S^1$-action. Note that for the case of circle bundle, the $S^1$ action is globally free and $X$ is strongly pseudoconvex if $L$ is positve. 


\subsection{Main result}\label{s-gue150508a}

We now formulate the main results. We refer to Section~\ref{s-gue150508bI} for some notations and terminology used here. 

Let $(X, T^{1,0}X)$ be a compact connected strongly pseudoconvex CR manifold with a transversal CR locally free $S^1$ action $e^{i\theta}$ (see Definition~\ref{d-gue160502}), where $T^{1,0}X$ is a CR structure of $X$. 
Let $T\in C^\infty(X,TX)$ be the real vector field induced by the $S^1$ action and let $\omega_0\in C^\infty(X,T^*X)$ be the global real one form determined by 
\begin{equation}\label{e-gue160502b}
\langle\,\omega_0\,,\,T\,\rangle=-1,\ \ \langle\,\omega_0\,,\,u\,\rangle=0,\ \ \forall u\in T^{1,0}X\oplus T^{0,1}X. \nonumber
\end{equation} 
For $x\in X$, we say that the period of $x$ is $\frac{2\pi}{\ell}$, $\ell\in\mathbb N$, if $e^{i\theta}\circ x\neq x$, for every $0<\theta<\frac{2\pi}{\ell}$ and $e^{i\frac{2\pi}{\ell}}\circ x=x$. For each $\ell\in\mathbb N$, put 
\begin{equation}\label{e-gue150802bm}
X_\ell=\set{x\in X;\, \mbox{the period of $x$ is $\frac{2\pi}{\ell}$}} 
\end{equation} 
and let 
$$
p=\min\set{\ell\in\mathbb N;\, X_\ell\neq\emptyset}
$$ 
It is well-known that if $X$ is connected, then $X_p$ is an open and dense subset of $X$ (see Duistermaat-Heckman~\cite{Du82}). 
In this work, we assume that $p=1$ 
and we denote 
$$
X_{{\rm reg\,}}:=X_{p}=X_1.
$$ 
We call $x\in X_{{\rm reg\,}}$ a regular point of the $S^1$ action. Let $X_{{\rm sing\,}}$ be the complement of $X_{{\rm reg\,}}$. 

Let $E$ be a rigid CR vector bundle over $X$ (see Definition~\ref{d-gue150508dI}) and we take a rigid Hermitian metric $\langle\,\cdot\,|\,\cdot\,\rangle_E$ on $E$ (see Definition~\ref{d-gue150514f}). Take a rigid Hermitian metric $\langle\,\cdot\,|\,\cdot\,\rangle$ on $\Complex TX$ such that 
$$
T^{1,0}X\perp T^{0,1}X, \quad T\perp (T^{1,0}X\oplus T^{0,1}X), \quad \langle\,T\,|\,T\,\rangle=1
$$
and let $\langle\,\cdot\,|\,\cdot\,\rangle_E$ be the Hermitian metric on $T^{*0,\bullet}X\otimes E$ induced by the fixed Hermitian metrics on $E$ and $\Complex TX$. 
We denote by $dv_X=dv_X(x)$ the volume form on $X$ induced by the Hermitian metric $\langle\,\cdot\,|\,\cdot\,\rangle$ on $\Complex TX$. Then we get natural global $L^2$ inner product $(\,\cdot\,|\,\cdot\,)_{E}$ on $\Omega^{0,\bullet}(X,E)$. We denote by $L^{2}(X,T^{*0,\bullet}X\otimes E)$ the completion of $\Omega^{0,\bullet}(X,E)$ with respect to $(\,\cdot\,|\,\cdot\,)_{E}$. For every $u\in\Omega^{0,\bullet}(X,E)$, we can define $Tu\in\Omega^{0,\bullet}(X,E)$ and we have $T\ddbar_b=\ddbar_bT$.
For $m\in\mathbb Z$, put 
\[\begin{split}
\Omega^{0,\bullet}_m(X,E):&=\set{u\in\Omega^{0,\bullet}(X,E);\, Tu=imu}\\
&=\set{u\in\Omega^{0,\bullet}(X,E);\, (e^{i\theta})^*u=e^{im\theta}u,\ \ \forall\theta\in[0,2\pi[},\end{split}\]
where $(e^{i\theta})^*$ denotes the pull-back map by $e^{i\theta}$ (see \eqref{e-gue150508faI}). 
For each $m\in\mathbb Z$, we denote by $L^{2}_m(X,T^{*0,\bullet}X\otimes E)$ the completion of $\Omega^{0,\bullet}_m(X,E)$ with respect to $(\,\cdot\,|\,\cdot\,)_{E}$. 

Since 
$$
T\ddbar_b=\ddbar_bT,
$$ 
we have 
\[\ddbar_{b,m}:=\ddbar_b:\Omega^{0,\bullet}_m(X,E)\To\Omega^{0,\bullet}_m(X,E).\] 
We also write
\[\ol{\pr}^{*}_b:\Omega^{0,\bullet}(X,E)\To\Omega^{0,\bullet}(X,E)\]
to denote the formal adjoint of $\ddbar_b$ with respect to $(\,\cdot\,|\,\cdot\,)_E$. Since $\langle\,\cdot\,|\,\cdot\,\rangle_E$ and $\langle\,\cdot\,|\,\cdot\,\rangle$ are rigid, we can check that 
\begin{equation}\label{e-gue150517im}
\begin{split}
&T\ddbar^{*}_b=\ddbar^{*}_bT\ \ \mbox{on $\Omega^{0,\bullet}(X,E)$},\\
&\ddbar^{*}_{b,m}:=\ddbar^{*}_b:\Omega^{0,\bullet}_m(X,E)\To\Omega^{0,\bullet}_m(X,E),\ \ \forall m\in\mathbb Z.
\end{split}
\end{equation}
Let $\Box_{b,m}$ denote the $m$-th Kohn Laplacian given by
\begin{equation}\label{e-gue151113ym}
\Box_{b,m}:=(\ddbar_b+\ddbar^*_b)^2:\Omega^{0,\bullet}_m(X,E)\To\Omega^{0,\bullet}_m(X,E).
\end{equation}
We extend $\Box_{b,m}$ to $L^{2}_m(X,T^{*0,\bullet}X\otimes E)$ by 
\begin{equation}\label{e-gue151113ymI}
\Box_{b,m}:{\rm Dom\,}\Box_{b,m}\subset L^{2}_m(X,T^{*0,\bullet}X\otimes E)\To L^{2}_m(X,T^{*0,\bullet}X\otimes E)\,,
\end{equation}
where 
$$
{\rm Dom\,}\Box_{b,m}:=\{u\in L^{2}_m(X,T^{*0,\bullet}X\otimes E);\, \Box_{b,m}u\in L^{2}_m(X,T^{*0,\bullet}X\otimes E)\},
$$ 
for which, for any $u\in L^{2}_m(X,T^{*0,\bullet}X\otimes E)$, $\Box_{b,m}u$ is defined in the sense of distribution. It is known that $\Box_{b,m}$ is self-adjoint, ${\rm Spec\,}\Box_{b,m}$ is a discrete subset of $[0,\infty[$ and for every $\nu\in{\rm Spec\,}\Box_{b,m}$, $\nu$ is an eigenvalue of $\Box_{b,m}$ (see Section 3 in~\cite{CHT}). Let 
$
e^{-t\Box_{b,m}} 
$
be associated heat operator. 
Let $N$ be the number operator on $T^{*0,\bullet}X$, i.e. $N$ acts on $T^{*0,q}X$ by multiplication by $q$, and $\operatorname{STr}$ denotes the super trace (see the discussion in the beginning of Section~\ref{s-gue160502q}). We denote by 
$$
\Pi^\perp_m:L^2_m(X,T^{0,\bullet}X\otimes E)\To({\rm Ker\,}\Box_{b,m})^\perp
$$ 
the orthogonal projection. From \eqref{E:5.5.10b}, for $\operatorname{Re}(z)>n$, we can define the $\zeta$ function
\begin{equation}\label{E:5.5.12b}
\theta_{b,m}(z)  = - \mathcal{M} \left\lbrack \operatorname{STr}  \lbrack N e^{-t \Box_{b,m}} \Pi^\perp_m  \rbrack \right\rbrack  =   - \operatorname{STr} \left\lbrack N ({\Box}_{b,m})^{-z} {\Pi}^\perp_m \right\rbrack \nonumber
\end{equation}
and $\theta_{b,m}(z)$ extends to a meromorphic function on $\mathbb{C}$ with poles contained in the set  
\[\set{\ell-\frac{j}{2};\, \ell,j\in\mathbb Z},\]
its possible poles are simple, and $\theta_{b,m}(z)$ is holomorphic at $0$ (see Lemma \ref{L:meroext} or \cite[Lemma 4.4]{HH}), where $\mathcal{M}$ denotes the Mellin transformation, cf. Definition~\ref{d-gue160313}. The $m$-th Fourier component of the analytic torsion for the vector bundle $E$ over $X$ is given by $\exp ( -\frac{1}{2} \theta_{b,m}'(0) )$ (see Definition~\ref{d-gue160502w}). 

Denote by 
$$
H^\bullet_{b, m}(X, E) = \oplus_{q=0}^n H^q_{b,m}(X,E),
$$ 
where $H^q_{b,m}(X,E), \ q=0,1, \cdots, n,$ is the $m$-th Fourier components of the Kohn-Rossi cohomology group (see Definition \ref{D:krcohgp}).
Then
\[
\det H^\bullet_{b,m}(X,E) = \otimes_{q=0}^n \left( \det H^q_{b,m}(X,E)  \right)^{(-1)^q}
\]
is the determinant line of the cohomology $H^\bullet_{b,m}(X,E)$. We define 
\begin{equation}
\lambda_{b,m}(E) = \left( \det H^\bullet_{b,m}(X,E)  \right)^{-1}. \nonumber
\end{equation}

By Theorem 3.7 of \cite{CHT}, the cohomology $H^q_{b,m}(X,E)$ is isomorphic to the kernel of $\Box^{(q)}_{b,m}$. The metrics $\langle \, \cdot \, |\, \cdot \, \rangle$ and $\langle \, \cdot \, |\, \cdot \, \rangle_E$ induce a canonical $L^2$-metric $h^{H^\bullet_{b,m}(X,E)}$ on $H^\bullet_{b,m}(X,E)$. Let $|\cdot|_{\lambda_{b,m}(E)}$ be the $L^2$-metric on $\lambda_{b,m}(E)$ induced by $h^{H^\bullet_{b,m}(X,E)}$. Fix $m \in \mathbb{Z}$. The Quillen metric $\| \cdot  \|_{\lambda_{b, m}(E)}$ on $\det H^\bullet_{b,m}(X,E)$ is defined as
\[
\| \cdot  \|_{\lambda_{b, m}(E)} \, := \,  |\cdot|_{\lambda_{b,m}(E)} \cdot \exp ( -\frac{1}{2} \theta_{b,m}'(0)  ).
\]

Let $\nabla^{TX}$ and $\nabla'^{TX}$ be the Levit-Civita connections on $TX$ with respect to the rigid Hermitian metrics $\langle\, \cdot \, | \, \cdot \, \rangle$ and $\langle \, \cdot \, | \, \cdot \, \rangle'$ on $\mathbb{C}TX$, respectively. Let $P_{T^{1,0}X}$ be the natural projection from $\mathbb{C}TX$ onto $T^{1,0}X$. Then, 
$$
\nabla^{T^{1,0}X}\, := \, P_{T^{1,0}X}\nabla^{TX}
$$
and 
$$
\nabla'^{T^{1,0}X}\, := \, P_{T^{1,0}X}\nabla'^{TX}
$$ 
are connections on $T^{1,0}X$. Let $\nabla^E$ and $\nabla'^E$ be the connections on $E$ induced by the rigid Hermitian metrics $h^E$ and ${h'}^E$ on $E$, respectively.
Denote by $\widetilde{\operatorname{Td}}_{b}(\nabla^{T^{1,0}X}, \nabla'^{T^{1,0}X}, T^{1,0}X)$ and $\widetilde{\operatorname{ch}}_{b}(\nabla^{E}, \nabla'^{E}, E)$ the tangential Bott-Chern classes,
$\operatorname{ch}_b(\nabla^E, E)$ the tangential Chern character and $\operatorname{Td}_{b}(\nabla'^{T^{1,0}X}, T^{1,0}X)$ the tangential Todd class, cf. Subsection \ref{S:ts}.

Our main result is the following

\begin{theorem}\label{T:main0}
With the notations and assumptions above, the following identity holds:
\begin{eqnarray}
\log \left( \,  \frac{|| \cdot ||'_{\lambda_{b,m}(E)} }{|| \cdot ||_{\lambda_{b,m}(E)} }   \, \right)  \, = \, \frac{1}{2\pi}  \int_X \widetilde{\operatorname{Td}}_{b}(\nabla^{T^{1,0}X}, \nabla'^{T^{1,0}X}, T^{1,0}X) \wedge \operatorname{ch}_b(\nabla^E, E) \wedge e^{-m\frac{d\omega_0}{2\pi}} \wedge \omega_0 \nonumber \\
 + \frac{1}{2\pi} \int_X \operatorname{Td}_{b}(\nabla'^{T^{1,0}X}, T^{1,0}X) \wedge \widetilde{\operatorname{ch}}_{b}(\nabla^{E}, \nabla'^{E}, E) \wedge e^{-m \frac{d\omega_0}{2\pi}} \wedge \omega_0, \nonumber
\end{eqnarray}
where $e^{-m\frac{d\omega_0}{2\pi}}$ denotes the Chern polynomial of the Levi curvature, cf. \eqref{E:leviform2}, and $\omega_0$ is the unique one form given by \eqref{e-gue150802bm}, see also \eqref{E:global1form}.
\end{theorem}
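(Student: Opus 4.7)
The plan is to follow the variational approach of Bismut--Gillet--Soul\'e, adapted to the transversally elliptic setting via the heat kernel techniques of \cite{CHT, HH, LY}. I connect the two pairs of rigid Hermitian metrics $(\langle\,\cdot\,|\,\cdot\,\rangle, h^E)$ and $(\langle\,\cdot\,|\,\cdot\,\rangle', {h'}^E)$ by a smooth one-parameter family of rigid Hermitian metrics $(\langle\,\cdot\,|\,\cdot\,\rangle_s, h^E_s)$, $s\in[0,1]$, and study $\frac{d}{ds}\log\|\cdot\|^{2}_{\lambda_{b,m}(E),s}$. Upon integration in $s$, the defining property of the tangential Bott--Chern transgressions (whose $\ddbar_b$-image on $X$ represents the differences $\operatorname{Td}_b(\nabla'^{T^{1,0}X})-\operatorname{Td}_b(\nabla^{T^{1,0}X})$ and $\operatorname{ch}_b(\nabla'^{E})-\operatorname{ch}_b(\nabla^{E})$, see Subsection~\ref{S:ts}) will convert the pointwise identity into the global formula of Theorem~\ref{T:main0}. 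Thus the heart of the proof is to establish the pointwise-in-$s$ variation formula.

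Differentiating the Quillen norm splits the variation into two parts: the variation of the $L^2$ metric on $\lambda_{b,m}(E)$ induced by the changing Hermitian structure on harmonic representatives, and the variation of $\exp(-\theta'_{b,m}(0))$. Using the identity $\Pi^\perp_m = \lim_{t\to\infty}(e^{-t\Box_{b,m}}-P_{\ker\Box_{b,m}})$ together with Duhamel's principle for the family $\Box_{b,m,s}$, the two contributions combine into a single Mellin-transform expression
\begin{equation*}
\tfrac{d}{ds}\theta'_{b,m}(0,s) \;=\; -\,\mathcal{M}\!\left[\operatorname{STr}\!\left[ N\,\dot{\Box}_{b,m,s}\, e^{-t\Box_{b,m,s}}\,\Pi^\perp_{m,s}\right]\right]_{z=0} \;+\;(\text{boundary term at }t=\infty),
\end{equation*}
with $\dot{\Box}_{b,m,s} = \frac{d}{ds}\Box_{b,m,s}$. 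Integrating by parts in $t$ and exploiting the commutator $[\ddbar_b+\ddbar_b^*, N]$, as in \cite[Theorem 1.23]{BGS3} and \cite{LY}, the contribution at $t=\infty$ cancels the variation of the $L^2$ part, and the entire quantity reduces to the finite part as $t\to 0^+$ of a supertrace of the form $\operatorname{STr}[\alpha_s\, e^{-t\Box_{b,m,s}}\,\Pi_m]$, where $\alpha_s$ is a local endomorphism built from $(h^E_s)^{-1}\dot{h}^E_s$ and the analogous derivative of the rigid metric on $\mathbb{C}TX$, and $\Pi_m$ projects onto the $m$-th Fourier component.

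The second step is the small-time asymptotic analysis. By \eqref{e-gue151108}, $e^{-t\Box_{b,m}}(x,x)$ admits a local asymptotic expansion as $t\to 0^+$; on any open set where the $S^1$-action is locally free, the bijection $A_m$ of \eqref{E:bijec} locally identifies $\Box_{b,m}$ with the Kodaira Laplacian $\Box_m$ on the local quotient twisted by $L^m$, so the constant term in the expansion of the supertrace is governed by the local index computation of Bismut--Gillet--Soul\'e on the quotient. Transporting the resulting densities back to $X$ via $A_m^{-1}$ produces, in place of the usual $\operatorname{Td}\wedge\operatorname{ch}$ Chern--Weil forms, their tangential counterparts $\operatorname{Td}_b$ and $\operatorname{ch}_b$ built from $\nabla^{T^{1,0}X}$ and $\nabla^E$, while the isotypical weight $m$ enters through the $m$-th power of the local weight of the line bundle, producing the factor $e^{-m\,d\omega_0/(2\pi)}$. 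The one-form $\omega_0$ and the prefactor $\frac{1}{2\pi}$ arise from writing $dv_X$ as $\omega_0\wedge(d\omega_0)^n/n!$ up to a $2\pi$-factor and integrating over the $S^1$-orbits in the Fourier projection. Gluing with a rigid partition of unity and using that $X_{\mathrm{reg}}$ is open and dense completes the identification of the variation as the $s$-derivative of the integrand of Theorem~\ref{T:main0}; integration in $s$ finishes the proof.

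The main obstacle is the transversal (not full) ellipticity of $\Box_{b,m}$: the heat operator $e^{-t\Box_{b,m}}$ is not trace class on $L^2(X,T^{*0,\bullet}X\otimes E)$, only on each $m$-th Fourier component, and the small-time local index computation must be performed directly on $X$ rather than on the quotient, which is merely an orbifold when $X_{\mathrm{sing}}\neq\emptyset$. Ensuring that the local contributions from charts on $X_{\mathrm{reg}}$ extend continuously across $X_{\mathrm{sing}}$, and that the resulting tangential transgression actually integrates against $e^{-m\,d\omega_0/(2\pi)}\wedge\omega_0$ in the globally well-defined manner required, demands a careful use of the rigid geometry and the tangential Bott--Chern formalism of Subsection~\ref{S:ts} together with the local Kohn-Laplacian analysis of \cite{CHT, HH}.
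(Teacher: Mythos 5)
Your proposal follows essentially the same route as the paper: a Bismut--Gillet--Soul\'e-type variational argument in which the $s$-derivative of the Quillen metric is identified with the constant term of the small-time supertrace expansion (the paper's Theorem~\ref{T:5.5.6} and Proposition~\ref{P:strqbm}), computed locally by identifying $\Box_{b,m}$ on BRT charts with a Kodaira Laplacian twisted by $E\otimes L^m$ with weight $e^{-2m\varphi}$ (Lemma~\ref{l-gue150606}, Theorem~\ref{t-gue150607}), glued by a partition of unity and $S^1$-Fourier averaging up to $O(e^{-\epsilon_0/t})$ errors (Theorem~\ref{t-gue150630I}), and finally identified with the tangential Bott--Chern and Chern--Weil forms times $e^{-m\frac{d\omega_0}{2\pi}}\wedge\omega_0$ (Theorem~\ref{T:a0zz}) before integrating in $s$. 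Apart from minor imprecisions (e.g.\ the transgression property involves $\frac{\ddbar_b\pr_b}{2\pi\sqrt{-1}}$ rather than $\ddbar_b$ alone, and the paper uses BRT trivializations rather than a literal local version of the map $A_m$), your outline matches the paper's proof.
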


Note that the proof of Theorem~\ref{T:main0} is based on Theorem~\ref{T:5.5.6}, Theorem~\ref{t-gue150607}, Theorem~\ref{t-gue150630I} and Theorem~\ref{T:a0zz} which are the main technical results of this paper.

This paper is organized as follows. In Section 2, we collect some notations, definitions and terminology we use throughout and state our main result. In the end of this section, we deduce our anomaly formula on some class of orbifold line bundle. In Section 3, we study the asymptotic behavior of certain heat kernels when $t \to 0^+$. In Section 4, we establish the anomaly formula for the $m$-th Fourier components of the Quillen metric on CR manifolds with a transversal CR $S^1$-action. In Section 5, we establish an asymptotic anomaly formula for the $m$-th Fourier component of the Quillen metric on CR manifolds with a transversal CR $S^1$-action. 


\section{Preliminaries and statement of main result}
In Subsection \ref{s-gue150508bI}, we collect some notations, definitions and terminology we use throughout. In Subsection \ref{S:hkokl}, we recall some background on heat kernels of Kohn Laplacian. In Subsection \ref{S:mtransf}, we recall the definition of Melin transformation. In Subsection \ref{s-gue160502q}, we recall the definition of the Fourier components of the analytic torsion and define the Quillen metric. In Subsection \ref{S:ts}, we define the tangential characteristic and Bott-Chern classes. In Subsection \ref{S:mainth}, we state our main result. Finally, in Subsection \ref{S:orbi}, we deduce our anomaly formula on some class of orbifold line bundle.

\subsection{Set up and terminology}\label{s-gue150508bI} 

Let $(X, T^{1,0}X)$ be a compact CR manifold of dimension $2n+1$, $n\geq 1$, where $T^{1,0}X$ is a CR structure of $X$, that is, $T^{1,0}X$ is a subbundle of the complexified tangent bundle $\mathbb{C}TX$ of rank $n$ satisfying 
$$
T^{1,0}X\cap T^{0,1}X=\{0\},
$$ 
where 
$$
T^{0,1}X=\overline{T^{1,0}X} \quad
 \text{and} 
\quad
[\mathcal V,\mathcal V]\subset\mathcal V,
$$ where $\mathcal V=C^\infty(X, T^{1,0}X)$. There is a unique subbundle $HX$ of $TX$ such that 
$$
\mathbb{C}HX=T^{1,0}X \oplus T^{0,1}X,
$$
 i.e. $HX$ is the real part of $T^{1,0}X \oplus T^{0,1}X$. Let $J \, : \, HX \to HX$ be the complex structure map given by 
$$
J(u+\bar{u}) \, = \, iu-i\bar{u},
$$ 
for every $u \, \in \, T^{1,0}X$. By complex linear extension $J$ to $\mathbb{C}TX$, the $i$-eigenspace of $J$ is given by 
$$
T^{1,0}X \, = \, \left\{ V \in \mathbb{C}HX \, : \, JV \, = \, \sqrt{-1}V  \right\}.
$$ 
We shall also write $(X, HX, J)$ to denote a compact CR manifold. Let $E$ be a smooth vector bundle over $X$. We use $\Gamma(E)$ to denote the space of smooth sections of $E$ on $X$.

Let $(X, HX, J)$ be a compact CR manifold. From now on, we assume that $(X, HX, J)$ admits a $S^1$ action: 
$$
S^1\times X\rightarrow X, (e^{i\theta}, x) \mapsto e^{i\theta}\circ x.
$$ 
We write $e^{i\theta}$ to denote the $S^1$ action. Let $T\in C^\infty(X, TX)$ be the global real vector field induced by the $S^1$ action given by 
$$
(Tu)(x) \, = \, 
\frac{\partial}{\partial\theta}\left(u(e^{i\theta}\circ x)\right)|_{\theta=0},  \quad u\in C^\infty(X).
$$ 

\begin{definition}\label{d-gue160502}
We say that the $S^1$ action $e^{i\theta}$ is CR if
$$[T, C^\infty(X, T^{1,0}X)]\subset C^\infty(X, T^{1,0}X)$$ and the $S^1$ action is transversal if, for each $x\in X$,
$$\mathbb CT_xX=T_x^{1,0}X\oplus T_x^{0,1}X\oplus\Complex T(x).$$ Moreover, we say that the $S^1$ action is locally free if $T\neq0$ everywhere. It should be mentioned that transversality implies locally free.
\end{definition}

We assume throughout that $(X, T^{1,0}X)$ is a compact connected CR manifold with a transversal CR locally free $S^1$ action $e^{i\theta}$ and we let $T$ be the global vector field induced by the $S^1$ action. 
Then $L_TJ=0$ on $HX$, where $L_T$ denotes the Lie derivative along the direction $T$, cf. \cite[Lemma 2.3]{HLM}.

Since 
$$
[\Gamma(T^{1,0}X), \Gamma(T^{1,0}X)] \subset \Gamma(T^{1,0}X),
$$ 
we have 
$$
[JU, JV] - [U, V] \in C^\infty(X, HX),
$$ 
for all $U, V \in C^\infty(X, HX)$. Let $\omega_0\in C^\infty(X,T^*X)$ be the global real one form dual to $T$, that is,
\begin{equation}\label{E:global1form}
\langle\,\omega_0\,,\,T\,\rangle=-1, \quad \langle\,\omega_0\,,\,HX\,\rangle=0. 
\end{equation}
Then, for each $x \in X$, we define a quadratic form on $HX$ by
\begin{equation}\label{E:leviform1}
\mathcal{L}_x(U, V) \, = \, \frac{1}{2} d\omega_0(JU, V), \quad \forall \, U, V\in H_xX.  \nonumber
\end{equation}
We extend $\mathcal{L}$ to $\mathbb{C}HX$ by complex linear extension. Then, for $U, V \in T_x^{1,0}X,$
\begin{equation}\label{E:leviform2}
\mathcal{L}_x(U,\ol V) \, = \, \frac{1}{2} d\omega_0(JU, V) \, = \, -\frac{1}{2i} d\omega_0(U, \ol V).
\end{equation}
The Hermitian quadratic form $\mathcal{L}_x$ on $T_x^{1,0}X$ is called the Levi form at $x$.

\begin{definition}
We say that $T^{1,0}X$ is a strongly pseudoconvex structure and $X$ is a strongly pseudoconvex CR manifold if the Levi form $\mathcal{L}_x$ is a positive definite quadratic form on $H_xX$, for each $x \in X$.
\end{definition}

We further assume throughout that $(X, T^{1,0}X)$ is a compact connected strongly pseudoconvex CR manifold with a transversally CR locally free $S^1$-action. It should be noted that a strongly pseudoconvex CR manifold is always a contact manifold. From \eqref{E:global1form}, we see that $\omega_0$ is a contact form, $HX$ is the contact plane and $T$ is the Reeb vector field.

Denote by $T^{*1,0}X$ and $T^{*0,1}X$ the dual bundles of
$T^{1,0}X$ and $T^{0,1}X$, respectively. Define the vector bundle of $(p,q)$ forms by
$$
T^{*p,q}X \, := \, \Lambda^p(T^{*1,0}X) \wedge \Lambda^q(T^{*0,1}X).
$$ 
Put 
$$
T^{*0,\bullet}X \, := \, \oplus_{j\in\set{0,1,\ldots,n}}T^{*0,j}X.
$$
Let $D \subset X$ be an open subset. Let $\Omega^{p,q}(D)$
denote the space of smooth sections of $T^{*p,q}X$ over $D$ and let $\Omega_0^{p,q}(D)$
be the subspace of $\Omega^{p,q}(D)$ whose elements have compact support in $D$. Put 
\[\begin{split}
&\Omega^{0,\bullet}(D) \, := \, \oplus_{j\in\set{0,1,\ldots,n}}\Omega^{0,j}(D),\\
&\Omega^{0,\bullet}_0(D) \, := \, \oplus_{j\in\set{0,1,\ldots,n}}\Omega^{0,j}_0(D).\end{split}\]
Similarly, if $E$ is a vector bundle over $D$, then we let $\Omega^{p,q}(D,E)$ denote the space of smooth sections of $T^{*p,q}X\otimes E$ over $D$ and let $\Omega_0^{p,q}(D,E)$ be the subspace of $\Omega^{p,q}(D, E)$ whose elements have compact support in $D$. Put 
 \[\begin{split}
&\Omega^{0,\bullet}(D,E) \, := \, \oplus_{j\in\set{0,1,\ldots,n}}\Omega^{0,j}(D,E),\\
&\Omega^{0,\bullet}_0(D,E) \, := \, \oplus_{j\in\set{0,1,\ldots,n}}\Omega^{0,j}_0(D,E).\end{split}\]

Fix $\theta_0\in]-\pi, \pi[$, $\theta_0$ small. Let
$$d e^{i\theta_0}: \mathbb CT_x X\rightarrow \mathbb CT_{e^{i\theta_0}x}X$$
denote the differential map of $e^{i\theta_0}: X\rightarrow X$. By the CR property of the $S^1$ action, we can check that
\begin{equation}\label{e-gue150508fa}
\begin{split}
de^{i\theta_0} \, : \, T_x^{1,0}X\rightarrow T^{1,0}_{e^{i\theta_0}x}X,\\
de^{i\theta_0} \, : \, T_x^{0,1}X\rightarrow T^{0,1}_{e^{i\theta_0}x}X,\\
de^{i\theta_0}(T(x)) \, = \, T(e^{i\theta_0}x).
\end{split}
\end{equation}
Let 
$$
(de^{i\theta_0})^* \, : \, \Lambda^r(\Complex T^*X)\To\Lambda^r(\Complex T^*X)
$$ 
be the pull-back map by $e^{i\theta_0}$, $r=0,1,\ldots,2n+1$. From \eqref{e-gue150508fa}, it is easy to see that, for every $q=0,1,\ldots,n$, 
\begin{equation}\label{e-gue150508faI}
(de^{i\theta_0})^* \, : \, T^{*0,q}_{e^{i\theta_0}\circ x}X\To T^{*0,q}_{x}X.
\end{equation}

For $u\in\Omega^{0,q}(X)$, we define $Tu$ as follows:
\begin{equation}\label{e-gue150508faII}
(Tu)(X_1, \cdots, X_q) \, :=\, \frac{\pr}{\pr\theta}\Bigr((de^{i\theta})^*u(X_1, \cdots, X_q)\Bigr)\Big|_{\theta=0}, \quad X_1, \cdots, X_q \in T^{0,1}_xX.
\end{equation}
From \eqref{e-gue150508faI} and \eqref{e-gue150508faII}, we have 
$$
Tu \in \Omega^{0,q}(X),
$$ 
for all $u \in \Omega^{0,q}(X)$. From the definition of $Tu$, it is easy to check that 
$$
Tu=L_Tu,
$$ 
for $u \in \Omega^{0,q}(X)$, where $L_Tu$ is the Lie derivative of $u$ along the direction $T$. For every $\theta\in\Real$ and every $u\in C^\infty(X,\Lambda^r(\Complex T^*X))$, we write 
$$
u(e^{i\theta}\circ x):=(de^{i\theta})^*u(x).
$$ 
It is clear that, for every $u\in C^\infty(X,\Lambda^r(\Complex T^*X))$, we have 
\begin{equation}\label{e-gue150510f}
u(x) \, = \, \sum_{m\in\mathbb Z}\frac{1}{2\pi}\int^{\pi}_{-\pi}u(e^{i\theta}\circ x)e^{-im\theta}d\theta. \nonumber
\end{equation}

Let 
$$
\ddbar_b:\Omega^{0,q}(X)\rightarrow\Omega^{0,q+1}(X)
$$ 
be the Cauchy-Riemann operator. From the CR property of the $S^1$ action, it is straightforward from \eqref{e-gue150508faI} and \eqref{e-gue150508faII} to see that
\[T\ddbar_b=\ddbar_bT\ \ \mbox{on $\Omega^{0,\bullet}(X)$}.\]

\begin{definition}\label{d-gue50508d}
Let $D\subset U$ be an open set. We say that a function $u\in C^\infty(D)$ is rigid if $Tu=0$. We say that a function $u\in C^\infty(X)$ is Cauchy-Riemann (CR for short)
if $\ddbar_bu=0$. We call $u$ a rigid CR function if  $\ddbar_bu=0$ and $Tu=0$.
\end{definition}

\begin{definition} \label{d-gue150508dI}
Let $F$ be a complex vector bundle over $X$. We say that $F$ is rigid (CR) if 
$X$ can be covered with open sets $U_j$ with trivializing frames $\set{f^1_j,f^2_j,\dots,f^r_j}$, $j=1,2,\ldots$, such that the corresponding transition matrices are rigid (CR). The frames $\set{f^1_j,f^2_j,\dots,f^r_j}$, $j=1,2,\ldots$, are called rigid (CR) frames. 
\end{definition}

\begin{definition}\label{d-gue150514f}
Let $F$ be a complex rigid vector bundle over $X$ and let $\langle\,\cdot\,|\,\cdot\,\rangle_F$ be a Hermitian metric on $F$. We say that $\langle\,\cdot\,|\,\cdot\,\rangle_F$ is a rigid Hermitian metric if, for every rigid local frames $f_1,\ldots, f_r$ of $F$, we have $T\langle\,f_j\,|\,f_k\,\rangle_F=0$, for every $j,k=1,2,\ldots,r$. 
\end{definition} 

It is known that there is a rigid Hermitian metric on any rigid vector bundle $F$ (see Theorem 2.10 in~\cite{CHT} and Theorem 10.5 in~\cite{Hsiao14}). Note that  Baouendi-Rothschild-Treves~\cite{BRT85} proved that $T^{1,0}X$ is a rigid complex vector bundle over $X$.

From now on, let $E$ be a rigid CR vector bundle over $X$ and we take a rigid Hermitian metric $\langle\,\cdot\,|\,\cdot\,\rangle_E$ on $E$ and take a rigid Hermitian metric $\langle\,\cdot\,|\,\cdot\,\rangle$ on $\Complex TX$ such that 
$$T^{1,0}X\perp T^{0,1}X, \quad T\perp (T^{1,0}X\oplus T^{0,1}X), \quad 
\langle\,T\,|\,T\,\rangle=1.
$$
The Hermitian metrics on $E$ and $\Complex TX$ induce Hermitian metrics $\langle\,\cdot\,|\,\cdot\,\rangle$ and $\langle\,\cdot\,|\,\cdot\,\rangle_E$ on $T^{*0,\bullet}X$ and $T^{*0,\bullet}X\otimes E$, respectively. Let 
$$
A(x,y)\in (T^{*,\bullet}_yX\otimes E_y)^*\boxtimes (T^{*,\bullet}_xX\otimes E_x).
$$
 We write $\abs{A(x,y)}$ to denote the natural matrix norm of $A(x,y)$ induced by $\langle\,\cdot\,|\,\cdot\,\rangle_{E}$.
We denote by $dv_X=dv_X(x)$ the volume form on $X$ induced by the fixed 
Hermitian metric $\langle\,\cdot\,|\,\cdot\,\rangle$ on $\Complex TX$. Then we get natural global $L^2$ inner products $(\,\cdot\,|\,\cdot\,)_{E}$ and $(\,\cdot\,|\,\cdot\,)$
on $\Omega^{0,\bullet}(X,E)$ and $\Omega^{0,\bullet}(X)$, respectively. We denote by $L^2(X,T^{*0,q}X
\otimes E)$ and $L^2(X,T^{*0,q}X)$ the completions of $\Omega^{0,q}(X,E)$ and $\Omega^{0,q}(X)$ with respect to $(\,\cdot\,|\,\cdot\,)_{E}$ and $(\,\cdot\,|\,\cdot\,)$, respectively. Similarly, we denote by $L^2(X,T^{*0,\bullet}X
\otimes E)$ and $L^2(X,T^{*0,\bullet}X)$ the completions of $\Omega^{0,\bullet}(X,E)$ and $\Omega^{0,\bullet}(X)$ with respect to $(\,\cdot\,|\,\cdot\,)_{E}$ and $(\,\cdot\,|\,\cdot\,)$, respectively. We extend $(\,\cdot\,|\,\cdot\,)_{E}$ and $(\,\cdot\,|\,\cdot\,)$ to $L^2(X,T^{*0,\bullet}X\otimes E)$ and $L^2(X,T^{*0,\bullet}X)$ in the standard way, respectively.  For $f\in L^{2}(X,T^{*0,\bullet}X\otimes E)$, we denote $\norm{f}^2_{E}:=(\,f\,|\,f\,)_{E}$. Similarly, for $f\in L^{2}(X,T^{*0,\bullet}X)$, we denote $\norm{f}^2:=(\,f\,|\,f\,)$. 

We write $\ddbar_{b}$ to denote the tangential Cauchy-Riemann operator acting on forms with values in $E$:
\[\ddbar_{b}:\Omega^{0,\bullet}(X, E)\To\Omega^{0,\bullet}(X,E).\]
Since $E$ is rigid, we can also define $Tu$ for every $u\in\Omega^{0,q}(X,E)$ and we have 
\begin{equation}\label{e-gue150508d}
T\ddbar_{b}=\ddbar_{b}T\ \ \mbox{on $\Omega^{0,\bullet}(X,E)$}. 
\end{equation}
For every $m\in\mathbb Z$, let
\begin{equation}\label{e-gue150508dI}
\begin{split}
&\Omega^{0,q}_m(X,E) \, := \, \set{u\in\Omega^{0,q}(X,E);\, Tu=imu},\ \ q=0,1,2,\ldots,n, \nonumber \\
&\Omega^{0,\bullet}_m(X,E) \, := \, \set{u\in\Omega^{0,\bullet}(X,E);\, Tu=imu}. \nonumber
\end{split}
\end{equation} 
For each $m\in\mathbb Z$, we denote by $L^2_m(X,T^{*0,q}X\otimes E)$ and $L^2_m(X,T^{*0,q}X)$ the completions of $\Omega^{0,q}_m(X,E)$ and $\Omega^{0,q}_m(X)$ with respect to $(\,\cdot\,|\,\cdot\,)_{E}$ and $(\,\cdot\,|\,\cdot\,)$, respectively. Similarly, we denote by $L^2_m(X,T^{*0,\bullet}X\otimes E)$ and $L^2_m(X,T^{*0,\bullet}X)$ the completions of $\Omega^{0,\bullet}_m(X,E)$ and $\Omega^{0,\bullet}_m(X)$ with respect to $(\,\cdot\,|\,\cdot\,)_{E}$ and $(\,\cdot\,|\,\cdot\,)$, respectively. 


\subsection{Heat kernels of the Kohn Laplacians}\label{S:hkokl}

Since $T\ddbar_{b}=\ddbar_{b}T$, we have 
\[\ddbar_{b,m}:=\ddbar_{b}:\Omega^{0,\bullet}_m(X,E)\To\Omega^{0,\bullet}_m(X,E),\ \ \forall m\in\mathbb Z.\] 
We also write
\[\ol{\pr}^{*}_{b}:\Omega^{0,\bullet}(X,E)\To\Omega^{0,\bullet}(X,E)\]
to denote the formal adjoint of $\ddbar_{b}$ with respect to $(\,\cdot\,|\,\cdot\,)_E$.

Since $\langle\,\cdot\,|\,\cdot\,\rangle_E$ and $\langle\,\cdot\,|\,\cdot\,\rangle$ are rigid, we can check that 
\begin{equation}\label{e-gue150517i}
\begin{split}
&T\ddbar^{*}_{b}=\ddbar^{*}_{b}T\ \ \mbox{on $\Omega^{0,\bullet}(X,E)$}, \nonumber \\
&\ddbar^{*}_{b,m}:=\ddbar^{*}_{b}:\Omega^{0,\bullet}_m(X,E)\To\Omega^{0,\bullet}_m(X,E),\ \ \forall m\in\mathbb Z. \nonumber
\end{split}
\end{equation}
Now, we fix $m\in\mathbb Z$. The $m$-th Fourier component of Kohn Laplacian is given by
\begin{equation}\label{e-gue151113y}
\Box_{b,m}:=(\ddbar_{b,m}+\ddbar^{*}_{b,m})^2:\Omega^{0,\bullet}_m(X,E)\To\Omega^{0,\bullet}_m(X,E). \nonumber
\end{equation}
We extend $\Box_{b,m}$ to $L^{2}_m(X,T^{*0,\bullet}X\otimes E)$ by 
\begin{equation}\label{e-gue151113yI}
\Box_{b,m}:{\rm Dom\,}\Box_{b,m}\subset L^{2}_m(X,T^{*0,\bullet}X\otimes E)\To L^{2}_m(X,T^{*0,\bullet}X\otimes E)\,, \nonumber
\end{equation}
where 
$$
{\rm Dom\,}\Box_{b,m}:=\{u\in L^{2}_m(X,T^{*0,\bullet}X\otimes E);\, \Box_{b,m}u\in L^{2}_m(X,T^{*0,\bullet}X\otimes E)\}
$$ 
for which, for any $u\in L^{2}_m(X,T^{*0,\bullet}X\otimes E)$, $\Box_{b,m}u$ is defined in the sense of distribution. 
It is known that $\Box_{b,m}$ is self-adjoint, ${\rm Spec\,}\Box_{b,m}$ is a discrete subset of $[0,\infty[$ and, for every $\nu\in{\rm Spec\,}\Box_{b,m}$, $\nu$ is an eigenvalue of $\Box_{b,m}$ (see Section 3 in~\cite{CHT}).
For every $\nu\in{\rm Spec\,}\Box_{b,m}$, let $\set{f^\nu_1,\ldots,f^\nu_{d_{\nu}}}$ be an orthonormal frame for the eigenspace of $\Box_{b,m}$ with eigenvalue $\nu$. The heat kernel $e^{-t\Box_{b,m}}(x,y)$ is given by 
\begin{equation}\label{e-gue151023a}
e^{-t\Box_{b,m}}(x,y)=\sum_{\nu\in{\rm Spec\,}\Box_{b,m}}\sum^{d_{\nu}}_{j=1}e^{-\nu t}f^\nu_j(x)\otimes(f^\nu_j(y))^\dagger, \nonumber
\end{equation}
where $f^\nu_j(x)\otimes(f^\nu_j(y))^\dagger$ denotes the linear map: 
\[\begin{split}
f^\nu_j(x)\otimes(f^\nu_j(y))^\dagger:T^{*0,\bullet}_yX\otimes E_y&\To T^{*0,\bullet}_xX\otimes E_x,\\
u(y)\in T^{*0,\bullet}_yX\otimes E_y&\To f^\nu_j(x)\langle\,u(y)\,|\,f^\nu_j(y)\,\rangle_E\in T^{*0,\bullet}_xX\otimes E_x.\end{split}\]
Let 
$$
e^{-t\Box_{b,m}}:L^2(X,T^{*0,\bullet}X\otimes E)\To L^2_m(X,T^{*0,\bullet}X\otimes E)
$$ 
be the continuous operator with distribution kernel $e^{-t\Box_{b,m}}(x,y)$.


\subsection{Mellin transformation}\label{S:mtransf} 

Let $\Gamma(z)$ be the Gamma function on $\Complex$. Then, for ${\rm Re\,}z>0$, we have 
\[\Gamma(z)=\int^\infty_0e^{-t}t^{z-1}dt.\]
$\Gamma(z)^{-1}$ is an entire function on $\Complex$ and 
\begin{equation}\label{e-gue160313b}
\Gamma(z)^{-1}=z+O(z^2)\ \ \mbox{near $z=0$}. \nonumber
\end{equation}

We suppose that $f(t)\in C^\infty(\Real_+)$ verifies the following two conditions:
\begin{itemize}
\item[I.] 
\begin{equation}\label{e-gue160420g}
\mbox{$f(t)\sim \sum^{\infty}_{j=0}f_{-k+\frac{j}{2}}t^{-k+\frac{j}{2}}$ as $t\To0^+$},  \nonumber
\end{equation}
where $k\in\mathbb N_0$, $f_{-k+\frac{j}{2}}\in\mathbb C$, $j=0,1,2,\ldots$.
\item[II.]  For every $\delta>0$, there exist $c>0$, $C>0$ such that 
\begin{equation}\label{e-gue160420I}
\abs{f(t)}\leq Ce^{-ct},\ \ \forall t\geq\delta. \nonumber
\end{equation}
\end{itemize}

\begin{definition}\label{d-gue160313}
The \emph{Mellin transformation} of $f$ is the function defined by, for ${\rm Re\,}z>k$, 
\begin{equation}\label{e-gue160313bIII}
M[f](z)=\frac{1}{\Gamma(z)}\int^\infty_0 f(t)t^{z-1}dt.  \nonumber
\end{equation}
\end{definition}

We can repeat the proof of Lemma 5.5.2 in \cite{MM} and deduce the following, see \cite[Theorem 4.2]{HH} for the proof, 

\begin{theorem}\label{t-gue160313}
$M[f]$ extends to a meromorphic function on $\Complex$ with poles contained in 
\[\set{\ell-\frac{j}{2};\, \ell,j\in\mathbb Z},\] 
and its possible poles are simple. Moreover, $M[f]$ is holomorphic at $0$.
\end{theorem}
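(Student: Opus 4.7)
The plan is to prove Theorem~\ref{t-gue160313} by the classical splitting of the Mellin integral at $t=1$: write
$$M[f](z) = \frac{1}{\Gamma(z)}\bigl(I_0(z) + I_\infty(z)\bigr),$$
where $I_0(z) := \int_0^1 f(t)t^{z-1}\,dt$ and $I_\infty(z) := \int_1^\infty f(t)t^{z-1}\,dt$. First I would dispose of $I_\infty$: by condition~II, for every compact $K \subset \Complex$ one has $\abs{f(t)t^{z-1}} \leq C_K\, e^{-ct}\, t^{{\rm Re\,}z - 1}$ uniformly for $z \in K$ and $t \geq 1$, and the right-hand side is integrable on $[1,\infty)$; hence $I_\infty(z)$ is entire, and multiplying by the entire function $\Gamma(z)^{-1}$ contributes no poles to $M[f]$.

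The main work is the meromorphic extension of $I_0$. For each $N \in \mathbb N$, condition~I yields
$$f(t) = \sum_{j=0}^{N-1} f_{-k+\frac{j}{2}}\, t^{-k+\frac{j}{2}} + R_N(t), \qquad R_N(t) = O\bigl(t^{-k+\frac{N}{2}}\bigr) \ \text{as}\ t \to 0^+,$$
so that for ${\rm Re\,}z > k$,
$$I_0(z) = \sum_{j=0}^{N-1} \frac{f_{-k+\frac{j}{2}}}{z - k + \frac{j}{2}} + \int_0^1 R_N(t)\, t^{z-1}\, dt.$$
The finite sum is manifestly meromorphic on $\Complex$ with only simple poles, located at $z = k - j/2 \in \set{\ell-\frac{j}{2};\, \ell,j\in\mathbb Z}$, while the remainder integral converges absolutely and defines a holomorphic function of $z$ on the half-plane ${\rm Re\,}z > k - \frac{N}{2}$ (the usual differentiation-under-the-integral argument applies since $\partial_z t^{z-1} = t^{z-1}\log t$ remains integrable against the bound on $R_N$ on this half-plane). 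The representations obtained for different $N$ patch together on overlapping domains because both sides equal $I_0(z)$ where both are defined, so letting $N \to \infty$ produces a meromorphic extension of $I_0$ to all of $\Complex$, with only simple poles lying in $\set{\ell-\frac{j}{2};\, \ell,j\in\mathbb Z}$.

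Finally, I would deduce the pole structure of $M[f]$ and its holomorphy at $z=0$. Since $\Gamma(z)^{-1}$ is entire with simple zeros exactly at $\set{0,-1,-2,\ldots}$, multiplication by it preserves the simplicity of each pole of $I_0$ away from this zero set; at each point of the zero set, the simple zero of $\Gamma(z)^{-1}$ can only cancel a simple pole of $I_0$. In particular, at $z=0$ the only term in the finite-sum expansion that could contribute a pole is the one with $j = 2k$, giving at worst a simple pole of $I_0$ with residue $f_0$; the expansion $\Gamma(z)^{-1} = z + O(z^2)$ near $0$ cancels it exactly, so $M[f]$ is holomorphic at $0$. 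The only subtle step, and the one I would write out most carefully, is confirming the uniform remainder bound and the resulting holomorphy of $\int_0^1 R_N(t)\, t^{z-1}\,dt$ on the advertised half-plane — routine, but what actually makes the patching well-defined; the overall argument mirrors \cite[Lemma~5.5.2]{MM}.
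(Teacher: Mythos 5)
Your argument is correct and is essentially the same as the paper's: the paper simply invokes the proof of \cite[Lemma 5.5.2]{MM} (via \cite[Theorem 4.2]{HH}), which is exactly this splitting of the Mellin integral at $t=1$, term-by-term integration of the asymptotic expansion giving simple poles at $z=k-\frac{j}{2}$, holomorphy of the remainder and of the $[1,\infty)$ part, and cancellation at $z=0$ by the simple zero of $\Gamma(z)^{-1}$. No gaps; the uniform remainder bound you flag is indeed the only point needing care, and you handle it correctly.
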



\subsection{Definition of the Quillen metric}\label{s-gue160502q}

In this subsection we recall the construction of the Fourier components of the analytic torsion for the rigid CR vector bundle $E$ over the CR manifold $X$ with a transversal CR $S^1$-action from \cite[\S 4]{HH}.

Let $N$ be the number operator on $T^{*0,\bullet}X$, i.e. $N$ acts on $T^{*0,q}X$ by multiplication by $q$.
Fix $q=0, 1, \cdots, n$, and take a point $x \in X$. Let $e_1(x), \cdots, e_d(x)$ be an orthonormal frame of $T_x^{*0,q}X \otimes E_x$. Let 
$$
A \in (T_x^{*0,\bullet}X \otimes E_x)^*\boxtimes (T_x^{*0,\bullet}X \otimes E_x).
$$ 
Put 
$$
\operatorname{Tr}^{(q)} A := \sum_{j=1}^d \langle Ae_j | e_j \rangle_E
$$ 
and set
\begin{eqnarray}
\operatorname{Tr}A:= \sum_{j=0}^n \operatorname{Tr}^{(j)}A, \nonumber \\
\operatorname{STr}A:= \sum_{j=0}^n (-1)^j \operatorname{Tr}^{(j)}A.
\end{eqnarray}
Let 
$$
A:C^\infty(X,T^{*0,\bullet}X\otimes E)\To C^\infty(X,T^{*0,\bullet}X\otimes E)
$$ 
be a continuous operator with distribution kernel 
$$
A(x,y)\in C^\infty(X\times X,(T^{*0,\bullet}_yX\otimes E_y)^*\boxtimes(T^{*0,\bullet}_xX\otimes E_x)).
$$ 
We set 
\[\operatorname{Tr}^{(q)}\lbrack A\rbrack:=\int_X\operatorname{Tr}^{(q)} A(x,x)dv_X(x)\] and put 
\begin{eqnarray}
\operatorname{Tr}\lbrack A\rbrack:= \sum_{j=0}^n \operatorname{Tr}^{(j)}[A], \nonumber \\
\operatorname{STr}\lbrack A\rbrack:= \sum_{j=0}^n (-1)^j \operatorname{Tr}^{(j)}[A].
\end{eqnarray}
Let 
$$
\Pi_m : L^2_m(X, T^{*0,\bullet}X \otimes E) \to \Ker \Box_{b,m}
$$  
be the orthogonal projection and let 
$$
\Pi^\perp_m : L^2_m(X, T^{*0,\bullet}X \otimes E) \to (\Ker\Box_{b,m})^\perp
$$  
be the orthogonal projection, where 
\[(\Ker\Box_{b,m})^\perp=\set{u\in L^2_m(X, T^{*0,\bullet}X \otimes E);\, (\,u\,|\,v\,)_E=0,\  \ \forall v\in\Ker \Box^E_{b,m}}.\]

By \cite[Theorem 1.7]{CHT}, we have the following asymptotic expansion: 
\begin{equation}\label{E:5.5.10b}
\operatorname{STr}  \lbrack N e^{-t \Box_{b,m}} \rbrack:=\int \operatorname{STr} (Ne^{-t\Box_{b,m}} )(x,x)dv_X(x)\sim\sum^\infty_{j=0} \hat{B}_{m,-n+\frac{j}{2}}t^{-n+\frac{j}{2}}\ \ \mbox{as $t\To0^+$}, 
\end{equation}
where $\hat{B}_{m,-n+\frac{j}{2}}\in\Complex$ independent of $t$, for each $j$. By using \eqref{E:5.5.10b} and Theorem \ref{t-gue160313}, cf. also \cite[\S 4]{HH}, we can show that, for $\operatorname{Re}(z)>n$, the following $\zeta$-function is well defined, 
\begin{equation}\label{E:5.5.12}
\theta_{b,m}(z) \, = \, - \mathcal{M} \left\lbrack \operatorname{STr}  \lbrack N e^{-t \Box_{b,m}} \Pi^\perp_m  \rbrack \right\rbrack   \, = \, - \operatorname{STr} \left\lbrack N ({\Box}_{b,m})^{-z} {\Pi}^\perp_m \right\rbrack,  
\end{equation}
where $\mathcal{M}$ denotes the Mellin transformation, cf. Definition~\ref{d-gue160313}.
Moreover, we can show that, cf. \cite[Lemma  4.4]{HH},
\begin{lemma}\label{L:meroext}
 $\theta_{b,m}(z)$ extends to a meromorphic function on $\mathbb{C}$ with poles contained in the set  
\[\set{\ell-\frac{j}{2};\, \ell,j\in\mathbb Z},\]
its possible poles are simple, and $\theta_{b,m}(z)$ is holomorphic at $0$.
\end{lemma}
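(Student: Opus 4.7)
The plan is to reduce Lemma~\ref{L:meroext} to Theorem~\ref{t-gue160313} by verifying that
\[
f(t) := \operatorname{STr}[N e^{-t\Box_{b,m}} \Pi^\perp_m]
\]
satisfies the two hypotheses stated in Subsection~\ref{S:mtransf}, namely (I) a small-$t$ asymptotic expansion of the prescribed form, and (II) exponential decay as $t \to \infty$. Once these are established, Theorem~\ref{t-gue160313} applied to $f$ yields exactly the meromorphic continuation, the location and simplicity of the poles, and the holomorphy at $z=0$ asserted by Lemma~\ref{L:meroext}, since $\theta_{b,m}(z) = -\mathcal{M}[f](z)$.

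For condition (I), the key observation is that $\Box_{b,m}$ preserves the form degree, so its finite-dimensional kernel decomposes as a direct sum over $q$ (by Theorem~3.7 of \cite{CHT}), and hence the orthogonal projection $\Pi_m$ commutes with the number operator $N$. Combined with the identity $e^{-t\Box_{b,m}}\Pi_m = \Pi_m$, this gives
\[
\operatorname{STr}[N e^{-t\Box_{b,m}} \Pi^\perp_m] \, = \, \operatorname{STr}[N e^{-t\Box_{b,m}}] - \operatorname{STr}[N \Pi_m],
\]
where the subtracted term is a finite $t$-independent constant. Substituting the heat kernel expansion~\eqref{E:5.5.10b} into the first term produces an asymptotic expansion of the form $\sum_{j \geq 0} c_{-n+j/2}\, t^{-n+j/2}$ as $t \to 0^+$, which is exactly condition~(I) with $k = n$; the subtracted constant only modifies the $t^0$-coefficient.

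For condition (II), I would use the discreteness of $\operatorname{Spec}\Box_{b,m}$ with finite-multiplicity eigenvalues, proved in Section~3 of \cite{CHT} and recalled earlier in the excerpt. This furnishes a spectral gap $\lambda_1 > 0$ between $0$ and the smallest positive eigenvalue of $\Box_{b,m}$. Expanding $e^{-t\Box_{b,m}}\Pi^\perp_m$ as a sum over the nonzero eigenspaces and bounding via the trace-class property of $e^{-\Box_{b,m}/2}$ (which follows from the heat kernel being smoothing at positive time) yields
\[
|f(t)| \, \leq \, C\, e^{-\lambda_1 (t - 1/2)} \quad \text{for all } t \geq 1/2,
\]
so in particular $|f(t)| \leq C_\delta\, e^{-c t}$ on $[\delta, \infty)$ for every $\delta > 0$, which is condition~(II).

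With (I) and (II) in hand, Theorem~\ref{t-gue160313} applies directly to $f$, and its conclusions transfer verbatim to $\theta_{b,m} = -\mathcal{M}[f]$. No genuine obstacle arises: the substantive analytic input is the heat kernel expansion~\eqref{E:5.5.10b}, quoted from \cite[Theorem~1.7]{CHT}, together with the spectral theory of $\Box_{b,m}$ from the same reference. The only minor bookkeeping is to verify that subtracting the constant $\operatorname{STr}[N\Pi_m]$ does not spoil the form of the asymptotic expansion, which it clearly does not.
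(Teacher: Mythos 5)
Your proposal is correct and follows essentially the same route the paper takes (it defers to \cite[Lemma 4.4]{HH}, which is precisely this reduction): verify the small-time expansion via \eqref{E:5.5.10b} together with the constancy of $\operatorname{STr}[N\Pi_m]$, verify large-time exponential decay via the spectral gap of $\Box_{b,m}$, and then apply Theorem~\ref{t-gue160313} to $f(t)=\operatorname{STr}[Ne^{-t\Box_{b,m}}\Pi^\perp_m]$.
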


We can now introduce the definition of the $m$-th Fourier component of the analytic torsion for $X$ with $S^1$ action, cf. \cite[Definition 4.5]{HH}.

\begin{definition}\label{d-gue160502w}
Fix $m\in\mathbb Z$. We define $\exp ( -\frac{1}{2} \theta_{b,m}'(0) )$ the $m$-th Fourier component of the analytic torsion for the rigid vector bundle $E$ over the CR manifold $X$ with transversal CR $S^1$-action. 
\end{definition}

Put
$$
\ddbar_{b,m}\, := \, \ddbar_b:\Omega^{0,q}(X)\rightarrow\Omega^{0,q+1}(X)
$$ 
with a $\ddbar_{b,m}$-complex: 
\[
\ddbar_{b,m} : \cdots \rightarrow\Omega^{0,q-1}_m(X) \rightarrow \Omega^{0,q}_m(X)\rightarrow\Omega^{0,q+1}_m(X) \rightarrow \cdots.
\]

\begin{definition}\label{D:krcohgp}
For each $m \in \mathbb{Z}$ and $q=0,1, \cdots, n$, the cohomology group:
\[
H^q_{b,m}(X) \, := \, \frac{\operatorname{Ker} \overline{\partial}_{b,m}:\Omega^{0,q}_m(X) \to \Omega^{0,q+1}_m(X)}{\operatorname{Im}\overline{\partial}_{b,m}:\Omega^{0,q-1}_m(X) \to \Omega^{0,q}_m(X)}
\]
is called the $m$-th $S^1$ Fourier component of the $q$-th $\overline{\partial}_{b}$ Kohn-Rossi cohomology group.
\end{definition}
By Theorem 3.7 of \cite{CHT}, for each $m \in \mathbb{Z}$ and $q=0,1, \cdots, n$, the cohomology group $H^q_{b,m}(X,E)$ is isomorphic to the kernel of $\Box^{(q)}_{b,m}$ and $\dim H^q_{b,m}(X,E)< \infty$. 
Denote by 
$$
H^\bullet_{b, m}(X, E) = \oplus_{q=0}^n H^q_{b,m}(X,E).
$$ 
For a finite dimensional vector space $V$, we set 
$$
\det V := \wedge^{\text{max}}V.
$$
We then denote by 
$$
(\det V)^{-1}:= (\det V)^*,
$$ the dual line of $\det V$.
Then
\[
\det H^\bullet_{b,m}(X,E) = \otimes_{q=0}^n \left( \det H^q_{b,m}(X,E)  \right)^{(-1)^q}
\]
is the determinant line of the cohomology $H^\bullet_{b,m}(X,E)$. We define 
\begin{equation}\label{E:5.5.14}
\lambda_{b,m}(E) = \left( \det H^\bullet_{b,m}(X,E)  \right)^{-1}. \nonumber
\end{equation}
The rigid Hermitian metrics $\langle \, \cdot \, |\, \cdot \, \rangle$ and $\langle \, \cdot \, |\, \cdot \, \rangle_E$ on $\mathbb{C}TX$ and $E$, respectively, induce a canonical $L^2$-metric $h^{H^\bullet_{b,m}(X,E)}$ on $H^\bullet_{b,m}(X,E)$. Let $|\cdot|_{\lambda_{b,m}(E)}$ be the $L^2$-metric on $\lambda_{b,m}(E)$ induced by $h^{H^\bullet_{b,m}(X,E)}$. 

Now we can define the Quillen metric on $\det H^\bullet_{b,m}(X,E)$.

\begin{definition}\label{D:5.5.5}
Fix $m \in \mathbb{Z}$. The Quillen metric $\| \cdot  \|_{\lambda_{b, m}(E)}$ on $\det H^\bullet_{b,m}(X,E)$ is defined as
\[
\| \cdot  \|_{\lambda_{b, m}(E)} \, := \,  |\cdot|_{\lambda_{b,m}(E)} \cdot \exp ( -\frac{1}{2} \theta_{b,m,E}'(0)  ).
\]
\end{definition}


\subsection{Tangential de Rham cohomology group and tangential characteristic classes}\label{S:ts}
For every $r \, = \, 0, 1, 2, \cdots, 2n$, put 
$$
\Omega^r_0(X) \, = \, \left\{  u \in \oplus_{p+q=r} \Omega^{p,q}(X); \, Tu=0 \, \right\}
$$
and set 
$$
\Omega^\bullet_0(X) \, =\, \oplus_{r=0}^{2n}\Omega_0^r(X).
$$
 Since $Td = dT$ (see \eqref{e-gue150508d}), we have $d$-complex:
\[
d:\cdot \to \Omega^{r-1}_0(X) \to \Omega^r_0(X) \to \Omega^{r+1}_0(X) \to \cdots
\]
and we define the $r$-th tangential de Rham cohomology group: 
$$
\; \mathcal{H}^r_{b,0}(X) \, := \, \frac{\operatorname{Ker} d:\Omega^r_0(X) \to \Omega^{r+1}_0(X)}{\operatorname{Im} d:\Omega^{r-1}_0(X) \to \Omega^{r}_0(X)}.
$$ 
Put
 $$
 \mathcal{H}^\bullet_{b,0}(X) \, = \, \oplus^{2n}_{r=0}\mathcal{H}^r_{b,0}(X).
 $$
Let $F$ be a rigid complex vector bundle over $X$ of rank $r$. It was shown in \cite[Theorem 2.11]{CHT} that there is a rigid connection $\nabla$ on $F$, that is, for any rigid local frame $f \, = \, (f_1,f_2,\cdots,f_r)$ of $F$ on an open set $D \subset X$, the connection matrix $\theta(\nabla, f) \, = \, (\theta_{j,k})^r_{j,k=1}$ satisfies $\theta_{j,k} \in \Omega^1_0(D)$, for every $j, k = 1, \cdots, r$. Let 
$$
\Theta(\nabla, F) \in C^\infty(X, \wedge^2(\mathbb{C}T^*X)\otimes \operatorname{End}(F))
$$ 
be the associated curvature. Let 
$$
h(z) = \sum_{j=0}^\infty a_jz^j, a_j \in \mathbb{R},
$$ 
for every $j$, be a real power series on $z \in \mathbb{C}$. Set
\[
H(\Theta(\nabla, F)) \, = \, \operatorname{Tr} \left( h(\frac{i}{2\pi}\Theta(\nabla, F))\right).
\]
It is clear that 
$$
H(\Theta(\nabla,F)) \, \in \, \Omega^\bullet_0(X)
$$ 
and is known that $H(\Theta(\nabla,F))$ is a closed differential form and the tangential de Rham cohomology class
$$
\left[H(\Theta(\nabla,F))\right] \in \mathcal{H}^\bullet_{b,0}(X)
$$ 
does not depend on the choice of rigid connection $\nabla$, cf. \cite[Theorem 2.5, Theorem 2.6]{CHT}. Put 
\begin{equation}
\operatorname{ch}_b(\nabla, F) \, =  \, \operatorname{ch}_b(\Theta(\nabla, F)) := \, H(\Theta(\nabla, F)) \, \in \, \Omega^\bullet_0(X), \nonumber
\end{equation}
where $h(z)\, = \, e^z$ and set 
\begin{equation}
\operatorname{Td}_b(\nabla, F) \, = \, \operatorname{Td}_b(\Theta(\nabla, F))  \, := \, e^{H(\Theta(\nabla,F))} \, \in \, \Omega^\bullet_0(X), \nonumber
\end{equation}
where $h(z) \, = \, \log (\frac{z}{1-e^{-z}})$. We now introduce tangential Todd class and tangential Chern character.
\begin{definition}
Tangential Chern character of $F$ is given by 
\[
\operatorname{ch}_b(F) \, := \, [\operatorname{ch}_b(\nabla, F)] \, \in \, \mathcal{H}^\bullet_{b,0}(X),
\]
and tangential Todd class of $F$ is given by
\[
\operatorname{Td}_b(F) \, := \, [\operatorname{Td}_b(\nabla, F)] \, \in \, \mathcal{H}^\bullet_{b,0}(X).
\] 
\end{definition}

Let 
$$
P  \, = \, \left\{\, u \in \oplus^n_{p=0}\Omega^{p,p}(X); \, Tu = 0  \, \right\}.
$$ 
Let $P' \subset P$ be the set of smooth forms $\alpha \in P$ such that there exist smooth forms $\beta, \gamma \in \Omega^\bullet_0(X)$ for which 
$$
\alpha \, = \, \partial_b \beta + \overline{\partial}_b \gamma.
$$ 
When $\alpha, \alpha' \in P$, we write $\alpha \equiv \alpha'$ if $\alpha - \alpha' \in P'.$ We can check that if $\eta \in P$ is closed and has compact support and $\alpha \equiv \alpha'$, then
\[
\int_X \alpha \wedge \eta \wedge \omega_0 \, = \, \int_X \alpha' \wedge \eta \wedge \omega_0.
\]
Hence the pairing of elements of $P/P'$ with such $\eta$ is well-defined. Let $\nabla'$ be a rigid connection induced by another rigid Hermitian metric $\langle\,\cdot\,|\,\cdot\,\rangle'_F$ on $F$. By \cite[\S (f)]{BGS1}, we have the unique secondary tangential characteristic (Bott-Chern) classes $\widetilde{\operatorname{Td}}_b(\nabla, \nabla', F)$ and $\widetilde{\operatorname{ch}}_b( \nabla, \nabla', F)$ in $P/P'$ such that
\begin{eqnarray}
\frac{\overline{\partial}_b\partial_b}{2\pi\sqrt{-1}} \widetilde{\operatorname{Td}}_b(\nabla, \nabla', F) \, = \, \operatorname{Td}_b(\nabla', F) - \operatorname{Td}_b(\nabla, F), \nonumber \\
\frac{\overline{\partial}_b\partial_b}{2\pi\sqrt{-1}} \widetilde{\operatorname{ch}}_b(\nabla, \nabla', F) \, = \, \operatorname{ch}_b(\nabla', F) - \operatorname{ch}_b(\nabla, F). \nonumber
\end{eqnarray}

Baouendi-Rothschild-Treves \cite{BRT85} proved that $T^{1,0}X$ is a rigid complex vector bundle over $X$. Thus, we can define tangential Todd class of $T^{1,0}X$, tangential Chern character of $T^{1,0}X$ and tangential Bott-Chern classes of $T^{1,0}X$.

\subsection{Main Theorem}\label{S:mainth}
In this subsection we state the main result of this paper. Let $E$ be a rigid complex vector bundle over a compact connected strongly pseudoconvex CR manifold $X$ of dimension $2n+1, n \ge 1$ with a transversal CR $S^1$ action on $X$. Let $\nabla^{TX}$ and $\nabla'^{TX}$ be the Levit-Civita connections on $TX$ with respect to the metrics $\langle\, \cdot \, | \, \cdot \, \rangle$ and $\langle \, \cdot \, | \, \cdot \, \rangle'$ on $\mathbb{C}TX$, respectively. Let $P_{T^{1,0}X}$ be the natural projection from $\mathbb{C}TX$ onto $T^{1,0}X$. Then, 
$$
\nabla^{T^{1,0}X}\, := \, P_{T^{1,0}X}\nabla^{TX} \quad  \text{and}  \quad \nabla'^{T^{1,0}X}\, := \, P_{T^{1,0}X}\nabla'^{TX}
$$ 
are connections on $T^{1,0}X$. Let $\nabla^E$ and $\nabla'^E$ be the connections on $E$ induced by the rigid Hermitian metrics $\langle\,\cdot\,|\,\cdot\,\rangle_E$ and $\langle\, \cdot\, | \, \cdot \, \rangle'_E$ on $E$, respectively. We can check that $\nabla^{T^{1,0}X}, \nabla'^{T^{1,0}X}, \nabla^E$ and $\nabla'^E$ are rigid. We denote by $|| \cdot ||'_{\lambda_{b,m}(E)}$ the Quillen metric induced by the metrics $\langle\, \cdot \, | \, \cdot \, \rangle'$ and $\langle\, \cdot \, | \, \cdot \, \rangle'_E$. Denote by $\widetilde{\operatorname{Td}}_{b}(\nabla^{T^{1,0}X}, \nabla'^{T^{1,0}X}, T^{1,0}X)$ the secondary tangential Todd class for the vector bundle $T^{1,0}X$,
$\widetilde{\operatorname{ch}}_{b}(\nabla^{E}, \nabla'^{E}, E)$ the secondary tangential Chern character for the vector bundle $E$, $\operatorname{Td}_{b}(\nabla'^{T^{1,0}X}, T^{1,0}X)$ the Todd class for the vector bundle $T^{1,0}X$ and $\operatorname{ch}_b(\nabla^E, E)$ the Chern character for the vector bundle $E$.

The following theorem is the main result of this paper.

\begin{theorem}\label{T:main}
The following identity holds:
\begin{eqnarray}
\log \left( \,  \frac{|| \cdot ||'_{\lambda_{b,m}(E)} }{|| \cdot ||_{\lambda_{b,m}(E)} }   \, \right)  \, = \, \frac{1}{2\pi}  \int_X \widetilde{\operatorname{Td}}_{b}(\nabla^{T^{1,0}X}, \nabla'^{T^{1,0}X}, T^{1,0}X) \wedge \operatorname{ch}_b(\nabla^E, E) \wedge e^{-m\frac{d\omega_0}{2\pi}} \wedge \omega_0 \nonumber \\
 + \frac{1}{2\pi} \int_X \operatorname{Td}_{b}(\nabla'^{T^{1,0}X}, T^{1,0}X) \wedge \widetilde{\operatorname{ch}}_{b}(\nabla^{E}, \nabla'^{E}, E) \wedge e^{-m \frac{d\omega_0}{2\pi}} \wedge \omega_0. \nonumber
\end{eqnarray}
\end{theorem}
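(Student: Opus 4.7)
The plan is to follow the variational scheme of Bismut--Gillet--Soulé \cite{BGS3}, transplanted to the CR setting with a transversal $S^1$-action, using the heat kernel techniques developed in \cite{CHT, HH, LY}. Connect the two pairs of rigid Hermitian metrics by a smooth one-parameter family $\langle\,\cdot\,|\,\cdot\,\rangle_s$ on $\mathbb{C}TX$ and $\langle\,\cdot\,|\,\cdot\,\rangle_{E,s}$ on $E$, $s\in[0,1]$, with $s=0$ corresponding to the unprimed metrics and $s=1$ to the primed ones. I would check (using that rigidity is preserved by convex combination) that each interpolating metric is again rigid, so the framework of Subsection~\ref{s-gue150508bI} applies at every $s$. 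This yields a smooth family of Kohn Laplacians $\Box_{b,m,s}$, orthogonal projections $\Pi_{m,s}^\perp$, zeta functions $\theta_{b,m,s}(z)$, and Quillen metrics $\|\cdot\|_{\lambda_{b,m}(E),s}$, and it suffices to compute $\frac{d}{ds}\log\|\cdot\|^2_{\lambda_{b,m}(E),s}$ and integrate in $s$.

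Next I would differentiate the definition of $\|\cdot\|_{\lambda_{b,m}(E),s}$ in $s$. The variation has two pieces: the variation of the $L^2$-metric $|\cdot|_{\lambda_{b,m}(E),s}$ coming from the induced metric on $H^\bullet_{b,m}(X,E)$, and the variation $-\tfrac{1}{2}\tfrac{d}{ds}\theta'_{b,m,s}(0)$. Applying Duhamel's principle to $e^{-t\Box_{b,m,s}}$ and differentiating the Mellin transform \eqref{E:5.5.12}, the contribution of the torsion is expressed as the value at $z=0$ of the Mellin transform of a supertrace of the form
\begin{equation*}
\operatorname{STr}\!\bigl[(N\,\alpha_s + \beta_s)\, e^{-t\Box_{b,m,s}}\Pi^\perp_{m,s}\bigr],
\end{equation*}
where $\alpha_s,\beta_s$ are the rigid endomorphisms built from $\dot{\langle\,\cdot\,|\,\cdot\,\rangle}_s$ and $\dot{\langle\,\cdot\,|\,\cdot\,\rangle}_{E,s}$. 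The standard manipulations (partial integration in $t$, use of $[\Box_{b,m,s},\Pi^\perp_{m,s}]=0$ and Hodge theory, cancellation against the $L^2$-variation on cohomology) reduce this, via Theorem~\ref{T:5.5.6} and Theorem~\ref{t-gue150607}, to the coefficient of $t^0$ in the asymptotic expansion \eqref{E:5.5.10b} of the relevant supertrace as $t\to 0^+$.

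The decisive step is then to identify this local coefficient as a universal polynomial in the curvatures. Since $\Box_{b,m}$ is only transversally elliptic, I cannot invoke the Euclidean heat kernel directly; instead I exploit the $S^1$-action. On $X_{\rm reg}$ one passes to rigid coordinates $(z,\theta)$ in which the $m$-th Fourier component reduces $\Box_{b,m}$ to a Kodaira-type Laplacian with a twist by $e^{im\theta}$, giving a Bergman-type factor $e^{-m\,d\omega_0/(2\pi)}\wedge \omega_0$ after integration along the orbits; this is exactly the mechanism yielding \eqref{E:5.5.10b} in \cite[\S3]{CHT}. Applying the local index formula to the resulting elliptic symbol (Theorem~\ref{t-gue150630I} and Theorem~\ref{T:a0zz}) produces the Getzler-type identification of the constant term with
\begin{equation*}
\frac{1}{2\pi}\,\bigl(\text{tangential Chern--Weil variation form in }s\bigr)\wedge e^{-m\,d\omega_0/(2\pi)}\wedge\omega_0.
\end{equation*}

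Finally, I would integrate in $s$ from $0$ to $1$. The transgression identities for tangential characteristic forms recalled in Subsection~\ref{S:ts} say precisely that
\begin{equation*}
\int_0^1 \tfrac{\partial}{\partial s}\operatorname{Td}_b(\nabla_s^{T^{1,0}X},T^{1,0}X)\,ds \;\equiv\; \tfrac{\ddbar\pr}{2\pi\sqrt{-1}}\,\widetilde{\operatorname{Td}}_b(\nabla^{T^{1,0}X},\nabla'^{T^{1,0}X},T^{1,0}X),
\end{equation*}
and similarly for $\operatorname{ch}_b$, so integrating the pointwise identity of Step 3 against $e^{-m\,d\omega_0/(2\pi)}\wedge\omega_0$ and using that $d\omega_0$ and $\omega_0$ are closed on $X_{\rm reg}$, the $\ddbar\pr$ terms integrate to zero by Stokes (together with the vanishing \cite[Theorem 2.5]{CHT} that any $\ddbar\pr$-exact element of $P$ pairs trivially with closed rigid forms). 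This produces exactly the two Bott--Chern integrals on the right-hand side of Theorem~\ref{T:main}. The main obstacle is the third step: proving that the transversally elliptic local index contribution for $\Box_{b,m}$ really collapses to the tangential Chern--Weil form times the Bergman factor $e^{-m\,d\omega_0/(2\pi)}\wedge\omega_0$. Everything else is a matter of organizing the variational calculus and applying the transgression lemma for tangential Bott--Chern classes.
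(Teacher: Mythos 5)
Your overall strategy coincides with the paper's: interpolate the two pairs of rigid metrics by a family $s\mapsto(\langle\,\cdot\,|\,\cdot\,\rangle_s,\langle\,\cdot\,|\,\cdot\,\rangle_{E,s})$, identify $\frac{\partial}{\partial s}\log\|\cdot\|^2_{\lambda_{b,m}(E),s}$ with the constant term of the small-time expansion of $\operatorname{STr}\lbrack Q_{b,s}e^{-t\Box_{b,m,s}}\rbrack$ (Theorem \ref{T:5.5.6}), localize onto BRT charts where $\Box_{b,m}$ becomes a Kodaira-type Laplacian twisted by $L^m$ (Lemma \ref{l-gue150606}, Theorems \ref{t-gue150607} and \ref{t-gue150630I}), identify the local constant term with tangential characteristic data (Theorem \ref{T:a0zz}), and integrate in $s$. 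This is exactly how the paper assembles the proof, so the issue is whether your Steps 3--4 carry the identification through correctly.

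There is a genuine confusion at that point. In Step 3 you describe the local coefficient as ``the tangential Chern--Weil variation form in $s$'' and in Step 4 you invoke $\int_0^1\frac{\partial}{\partial s}\operatorname{Td}_b(\nabla_s^{T^{1,0}X},T^{1,0}X)\,ds\equiv\frac{\ddbar_b\pr_b}{2\pi\sqrt{-1}}\widetilde{\operatorname{Td}}_b(\nabla^{T^{1,0}X},\nabla'^{T^{1,0}X},T^{1,0}X)$ and then claim that Stokes applied to the $\ddbar_b\pr_b$-terms ``produces'' the two Bott--Chern integrals. As written this would prove the wrong thing: if the constant term at parameter $s$ were literally $\frac{\partial}{\partial s}\bigl(\operatorname{Td}_b\wedge\operatorname{ch}_b\bigr)\wedge e^{-m\frac{d\omega_0}{2\pi}}\wedge\omega_0$, then its $s$-integral would be $\ddbar_b\pr_b$-exact, and pairing an exact form of that type against the closed factor $e^{-m\frac{d\omega_0}{2\pi}}\wedge\omega_0$ gives zero, not the right-hand side of Theorem \ref{T:main}. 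The correct statement, which the paper imports from \cite[Theorems 1.27, 1.29 and Corollary 1.30]{BGS1} through \eqref{E:formalpha}, \eqref{E:bcchern} and Theorem \ref{T:a0zz}, is that the constant term at parameter $s$ is the Bott--Chern \emph{transgression density} (the $v$-derivative integrand of the double transgression $\alpha$, evaluated on $(g_s^U)^{-1}\frac{\partial g_s^U}{\partial s}$ and $(h_s^E)^{-1}\frac{\partial h_s^E}{\partial s}$); its $s$-integral equals $\widetilde{\operatorname{Td}}_b\wedge\operatorname{ch}_b+\operatorname{Td}_b\wedge\widetilde{\operatorname{ch}}_b$ only modulo $\pr_b$- and $\ddbar_b$-exact terms, and it is only this ambiguity that is killed by the pairing with the closed factor, as recalled in Subsection \ref{S:ts}. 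In addition, you flag precisely this identification as ``the main obstacle'' and leave it unproved; in the paper it is the content of Proposition \ref{P:strqbm} (the odd-variable operator $L_{b,m,t}$, following \cite{BGS3} and \cite{LY}), the local kernel expansion of Theorem \ref{t-gue150607}, the comparison Theorem \ref{t-gue150630I}, and \eqref{E:strabm}. Without supplying that step, and with Step 4 stated in its present form, the proposal is an outline whose final bookkeeping would collapse the answer to zero rather than yield the anomaly formula.
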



\subsection{Anomaly formula for some class of orbifold line bundles}\label{S:orbi}
In \cite{M}, Ma first introduced analytic torsion on orbifolds and anomaly formula for Quillen metrics in the case of orbifolds, which is expressed explicitly in the form of characteristic and Bott-chern characteristic classes. Comparing with Ma's formula, we get a simpler anomaly formula for some class of orbifold line bundles from our main result, Theorem \ref{T:main}. We first recall some backgrounds on orbifold geometry. We will follow the presentation of \cite[Subsection 1.4]{CHT} closely. 

Let $M$ be a manifold and let $G$ be a compact Lie group. Assume that $M$ admits a $G$-action:
\begin{eqnarray}
G \times M \to M, \nonumber \\
(g, x) \to g \circ x. \nonumber
\end{eqnarray}
We assume that the action of $G$ on $M$ is locally free, that is, for every point $x \in M$, the stabilizer group
\[
G_x \, :=\, \left\{ g \in G; g \circ x = x \right\}
\]
of $x$ is a finite subgroup of $G$. It is well known that, in such a case, the quotient space
\begin{equation}\label{E:m/g}
X \, := \, M/G
\end{equation}
is an orbifold. A theorem of Satake \cite{Sa} says that the converse is also true: every orbifold has a presentation of the form \eqref{E:m/g}. We assume that $M$ is a compact connected complex manifold with complex structure $T^{1,0}M$. Then the group $G$ induces an action on $\mathbb{C}TM$:
\[
\begin{split}
G \times \mathbb{C}TM \to \mathbb{C}TM, 
(g, u) \to g^*u,
\end{split}
\]
where $g^*=(g^{-1})_*$ denotes the push-forward by $g^{-1}$ on $\mathbb{C}TM$. We assume that $G$ acts holomorphically, that is, 
$$
g^*(T^{1,0}M) \subset T^{1,0}M,
$$ 
for every $g \in G$.
Let $T^{0,1}M := \overline{T^{1,0}M}$. Put
\[
\mathbb{C}T(M/G) \, := \, \mathbb{C}TM/G, \quad T^{1,0}(M/G) \, := \, T^{1,0}M/G, \quad  T^{0,1}(M/G) \, := \, T^{0,1}M/G.
\]
Assume that 
$$
T^{1,0}(M/G) \cap T^{0,1}(M/G) \, = \, \left\{ 0 \right\}.
$$
Then, $T^{1,0}(M/G)$ is a complex structure on $M/G$ and $M/G$ is a complex obifold. Suppose that 
\[
\dim_{\mathbb{C}}T^{1,0}(M/G) \, = \, n.
\]
Let $L$ be a $G$-invariant holomorphic line bundle over $M$, that is, for every transition function $h$ of $L$ on an open set $U \subset M$, we have $h(g \circ x) = h(x)$, for every $g \in G, \, x \in G$ with $g \circ x \in U.$ Suppose that $L$ admits a locally free-$G$ action:
\[
\begin{split}
G \times L \to L, \nonumber \\
(g, x) \to g \circ x, \nonumber
\end{split}
\]
where 
$$
\pi(g \circ x) \, =\, g \circ (\pi(x)),
$$
for every $g \in G$, where $\pi\, : \, L \to M$ denotes the natural projection, and where the action of $G$ on $L$ is linear on the fibers of $L$, that is, for every $g \in G$, every $z \in M$, we have 
\[
g \circ \left( s(z) \otimes \lambda \right) \,  = \, s_1(g \circ z) \otimes \rho(g, z)\lambda, 
\]
for every $\lambda \in \mathbb{C}$, where $s$ and $s_1$ are local sections of $L$ defined near $z$ and $g \circ z$, respectively, and $\rho(g, z) \in \mathbb{C}$ depends on $z$ and $g$ smoothly. Then, $L/G$ is an orbifold holomorphic line bundle over $M/G$. For every $m \in \mathbb{N}$, let $L^m$ be the $m$-th tensor power of $L$. Then, the $G$-action on $L$ induces a locally free $G$-action on $L^m$:
\[
\begin{split}
G \times L^m \to L^m, \\
(g, x) \to g \circ x,
\end{split}
\]
where
\[
\pi_m(g \circ x) \, = \, g \circ (\pi_m(x)),
\]
for every $g \in G$, where $\pi_m \, : \, L^m \to M$ denotes the natural projection, and the action of $G$ on $L^m$ is linear on the fibers of $L^m$. Then, $L^m/G$ is again an orbifold holomorphic line bundle over $M/G$. Now, we fix $m \in \mathbb{Z}$. Let $T^{*0,q}M$ denote the bundle of $(0,q)$ forms on $M$. Since $G$ action is holomorphic, $G$ induces a natural action on $T^{*0,q}M \otimes L^m$:
\[
\begin{split} 
G \times (T^{*0,q}M \otimes L^m) \to T^{*0,q}M \otimes L^m, \\
(g, u) \to g^*u.
\end{split}
\]
For every $q \, = \, 0, 1, 2, \cdots, n$, put
\begin{equation}
\Omega^{0,q}(M/G, L^m/G) \, := \, \left\{ u \in \Omega^{0,q}(M, L^m); \, g^*u = u, \ \forall g \in G \right\}, \nonumber
\end{equation}
where $\Omega^{0,q}(M, L^m)$ denotes the space of smooth sections with values in $T^{*0,q}M \otimes L^m$. The Cauchy-Riemann operator 
\[
\overline{\partial} \, : \, \Omega^{0,q}(M/G, L^m/G) \to \Omega^{0,q+1}(M/G, L^m/G)
\]
is $G$-invariant and we have the following $\overline{\partial}$-complex:
\[
\overline{\partial}: \cdots \to \Omega^{0,q-1}(M/G, L^m/G) \to \Omega^{0,q}(M/G, L^m/G) \to \Omega^{0,q+1}(M/G, L^m/G) \to \cdots
\]
and, hence, we can consider the $q$-th Dolbeault cohomology group:
\[
H^q(M/G, L^m/G) \, := \, \frac{\Ker \overline{\partial} \, : \, \Omega^{0,q}(M/G, L^m/G) \to \Omega^{0,q+1}(M/G, L^m/G)}{\operatorname{Im} \overline{\partial} \, : \, \Omega^{0,q-1}(M/G, L^m/G) \to \Omega^{0,q}(M/G, L^m/G)}
\]

Let $L^*$ be the dual bundle of $L$. Then, $L^*$ is also a $G$-invariant holomorphic line bundle and $L^*$ admits a locally free $G$-action:
\[
\begin{split}
G \otimes L^* \to L^*, \\
(g, x) \to g \circ x
\end{split}
\] 
where 
\[
\pi^*(g \circ x) \, = \, g \circ (\pi^*(x)),
\]
for every $g \in G$, where $\pi^* \, : \, L^* \to M$ denotes a natural projection, and the action of $G$ on $L^*$ is linear on the fibers of $L^*$. Then, $L^*/G$ is also an orbifold holomorphic line bundle over $M/G$. Let $\operatorname{Tot}(L^*)$ be the space of all non-zero vectors of $L^*$. Assume that $\operatorname{Tot}(L^*)/G$ is a smooth manifold. Take any $G$-invariant Hermitian fiber metric $h^{L^*}$ on $L^*$, set
\[
\widetilde{X} \, = \, \left\{ v \in L^*; |v|_{h^{L^*}} \, = \, 1 \right\}
\]
and put 
$$
X \, = \, \widetilde{X}/G.
$$
Since $\operatorname{Tot}(L^*)$ is a smooth manifol, $X \, = \, \widetilde{X}/G$ is a smooth manifold. The natural $S^1$ action on $\widetilde{X}$ induces a locally free action $S^1$ action $e^{i\theta}$ on $X$. Moreover, we can check that $X$ is a CR manifold and the $S^1$ action on $X$ is CR and transversal. We will use the same notations as before. We can repeat the proof of \cite[Theorem 1.2]{CHT} with minor changes and show that, for every $q \, =\, 0, 1, 2, \cdots, n,$ and every $m \in \mathbb{Z}$, we have
\begin{equation}\label{E:orblineiso}
\begin{split}
H^q(M/G, L^m/G) \, \cong \, H^q_{b,m}(X), \\
\dim H^q(M/G, L^m/G) \, = \, \dim H^q_{b,m}(X).
\end{split}
\end{equation}

We pause and introduce some notations. For every $x \in \operatorname{Tot}(L^*)$ and $g \in G$, put 
\[ 
N(g, x)\, = \,
  \begin{cases}
    1,       & \quad \text{if } g \not\in G_x \\
    \operatorname{inf}\left\{ l \in \mathbb{N}; g^l \, = \, \operatorname{Id} \right\},  & \quad \text{if } g \in G_x.
  \end{cases}
\]
Set
\[
p \, = \, \inf \left\{ N(g, x); \, x \in  \operatorname{Tot}(L^*), \ g \in G, \ g \not= \operatorname{Id} \right\},
\]
where $\operatorname{Id}$ denotes the identity element of $G$. It is known that $X_p$ is open and dense subset of $X$. Recall that in this work we work with $p=1$. From Theorem \ref{T:main} and \eqref{E:orblineiso}, we deduce
\begin{theorem}
With the notations used above, recall that we work with the assumptions that $M$ is connected and $\operatorname{Tot}(L^*)$ is smooth. Fix a Hermitian metric on $L^m/G$. Then, for every $m \in \mathbb{Z}$, we have
\begin{eqnarray}
\log \left( \,  \frac{|| \cdot ||'_{\lambda(L^m/G)} }{|| \cdot ||_{\lambda(L^m/G)} }   \, \right)  \, = \, \frac{1}{2\pi}  \int_X \widetilde{\operatorname{Td}}_{b}(\nabla^{T^{1,0}X}, \nabla'^{T^{1,0}X}, T^{1,0}X)  \wedge e^{-m\frac{d\omega_0}{2\pi}} \wedge \omega_0. \nonumber \\
\end{eqnarray}
\end{theorem}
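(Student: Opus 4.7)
The plan is to deduce this theorem from Theorem~\ref{T:main} by taking $E$ to be the trivial rigid CR line bundle on $X$ with its canonical Hermitian metric, and by transporting the orbifold Quillen metric on $\lambda(L^m/G)$ to $\lambda_{b,m}(E)$ via an orbifold analogue of the bijection~\eqref{E:bijec}.

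First I would establish such an orbifold analogue of \cite[Theorem~1.2]{CHT}. Choose the rigid Hermitian metric $\langle\,\cdot\,|\,\cdot\,\rangle$ on $\Complex TX$ so that the natural projection $\widetilde X\To M$ descends to an isometry between the horizontal distribution $HX$ and $\Complex T(M/G)$, with $T\perp HX$ and $\langle\,T\,|\,T\,\rangle=1$, and so that the fixed Hermitian metric on $L^m/G$ is the one induced by the $G$-invariant metric $h^{L^*}$. Repeating the proof of \cite[Theorem~1.2]{CHT} on the regular stratum $X_{\rm reg}$, where the $S^1$-action is globally free, and extending across $X_{\rm sing}$ by $G$-invariance (which is legitimate because $\operatorname{Tot}(L^*)/G$ is smooth and we work under the standing hypothesis $p=1$), one produces an isometry
\[
A_m:L^2_m(X,T^{*0,\bullet}X)\To L^2(M/G,T^{*0,\bullet}(M/G)\otimes L^m/G),\qquad A_m\ddbar_{b,m}=\ddbar A_m,
\]
which conjugates $\Box_{b,m}$ (for $E$ trivial) to the orbifold Kodaira Laplacian $\Box_{L^m/G}$. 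Doing the same for the second rigid metric $\langle\,\cdot\,|\,\cdot\,\rangle'$ identifies the two Quillen metrics: $\|\cdot\|_{\lambda(L^m/G)}$ with $\|\cdot\|_{\lambda_{b,m}(E)}$, and $\|\cdot\|'_{\lambda(L^m/G)}$ with $\|\cdot\|'_{\lambda_{b,m}(E)}$.

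Second, the problem is then reduced to Theorem~\ref{T:main} applied with $E$ the trivial line bundle. Since the canonical metric on the trivial line bundle is preserved under the variation of the metric on $\Complex TX$, we have $\nabla^E=\nabla'^E$ equal to the trivial flat connection, so that $\operatorname{ch}_b(\nabla^E,E)=1$ and $\widetilde{\operatorname{ch}}_b(\nabla^E,\nabla'^E,E)=0$ in $P/P'$. Hence the second integral in Theorem~\ref{T:main} vanishes and the first collapses precisely to the right-hand side claimed in the theorem.

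The main obstacle lies in the first step: producing an orbifold analogue of \cite[Theorem~1.2]{CHT} that is a genuine Hilbert-space isometry intertwining $\ddbar_{b,m}$ with the orbifold $\ddbar$, not merely a cohomological isomorphism. On $X_{\rm reg}$ the circle bundle argument of \cite{CHT} applies verbatim; the subtle point is to verify that the $G$-equivariant local models around points of $X_{\rm sing}$ glue correctly, so that the $m$-th Fourier components of $G$-invariant smooth sections on $\widetilde X$ match the smooth orbifold sections on $M/G$ with values in $L^m/G$, and likewise for their $L^2$-norms. This is exactly where the smoothness of $\operatorname{Tot}(L^*)/G$ enters. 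Once this spectral isometry is in place, the remaining reduction is purely mechanical.
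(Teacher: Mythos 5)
Your proposal is correct and follows essentially the same route as the paper: the paper likewise deduces the orbifold formula from Theorem~\ref{T:main} with trivial $E$ (so that $\operatorname{ch}_b(\nabla^E,E)=1$ and $\widetilde{\operatorname{ch}}_b(\nabla^E,\nabla'^E,E)=0$) together with the orbifold analogue of \cite[Theorem 1.2]{CHT} recorded in \eqref{E:orblineiso}. Your additional care in demanding a genuine $L^2$-isometry intertwining $\Box_{b,m}$ with the orbifold Kodaira Laplacian (so that torsions and $L^2$-metrics, not just cohomology, are identified) is exactly the content the paper compresses into ``repeat the proof of \cite[Theorem 1.2]{CHT} with minor changes.''
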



\section{Asymptotic expansion of heat kernels}
In this section we introduce the complex tangential $\ast$-operator and a certain asymptotic expansion of heat kernels. The main result is Theorem \ref{T:5.5.6} which can be viewed as a CR analogue of \cite[Theorem 1.18]{BGS1} (cf. also \cite[Theorem 5.5.6]{MM}).

\subsection{Asymptotic expansion for heat kernels of the Kohn Laplacians}

We now define the complex tangential Hodge $\ast$-operator, see also \cite[Proposition 8.8]{DT}, as a complex conjugate linear map
\[
\ast_b : \Omega^{p,q}(X) \to \Omega^{n-p,n-q}(X)
\]
such that
\[
\langle\, \phi\, | \, \psi \,\rangle \frac{(d\omega_0)^n}{n!} \, = \, \phi \wedge \ast_b \psi, \quad \ast_b \ast_b \phi = (-1)^{p+q} \phi,
\]
for any $\phi, \psi \in \Omega^{p,q}(X)$.

We denote by $H^*X$ the dual bundle of $HX$ and $conj$ the natural conjugate map induced by the bundle automorphism
\begin{equation}
H^*X\otimes_{\mathbb{R}}\mathbb{C} \to H^*X\otimes_{\mathbb{R}}\mathbb{C}, \quad u \otimes \lambda \mapsto u \otimes \overline{\lambda},
\end{equation}
for any $u \in H^*X, \lambda \in \mathbb{C}.$
Then 
$$
\widehat{\ast}_b := conj\ast_b
$$ 
is a complex linear map. Clearly, 
$$
\partial_b = conj \, \overline{\partial}_b \, conj \, : \, \Omega^{p,q}(X) \to \Omega^{p+1,q}(X)
$$
and
$$
\widehat{\ast}_b \, = \, conj \, \ast_b \, = \, \ast_b \, conj \, : \, \Omega^{p,q}(X) \to \Omega^{n-q,n-p}(X).
$$

Let $(\, \cdot \, |\, \cdot \, )$ be the $L^2$ inner product on $\Omega^{p,q}(X)$ induced by $\langle \, \cdot \, | \, \cdot \, \rangle$. Then, for all $\phi, \psi \in \Omega^{p,q}(X)$,
\[
(\, \phi \, |\, \psi\,) \, =\, \int_X \langle \, \phi \,| \, \psi \, \rangle dv_X \, =\, \int_X  \phi  \wedge  \ast_b  \psi   \wedge \omega_0,
\]
where
\begin{equation}\label{E:volume}
dv_X \, = \,  \frac{(d\omega_0)^n}{n!} \wedge \omega_0
\end{equation}
is the volume form.

We can easily check that
\[
\partial_b^* \phi = -\ast_b \partial_b \ast_b \phi = -\widehat{\ast}_b\, \overline{\partial}_b\, \widehat{\ast}_b\, \phi, 
\]
and
\[
 \overline{\partial}_b \psi = - \ast_b \overline{\partial}_b \ast_b \psi = - \widehat{\ast}_b\, {\partial}_b\, \widehat{\ast}_b\, \psi. 
\]

Denote by $\mu: E \to E^*$ the induced conjugate linear bundle isomorphism from the vector bundle $E$ to its dual vector bundle $E^*$. Let $(\, \cdot \, |\, \cdot \, )_E$ be the $L^2$ inner product on $\Omega^{p,q}(X,E)$ induced by $\langle \, \cdot \, | \, \cdot \, \rangle$ and $\langle \, \cdot \, | \, \cdot \, \rangle_E$. Then, for all $\alpha, \beta \in \Omega^{p,q}(X, E)$,
\[
(\, \alpha \, |\, \beta \,)_E \, =\, \int_X \langle \, \alpha \,| \, \beta \, \rangle_E dv_X \, =\, \int_X  \alpha  \wedge  \left(\ast_b \otimes \mu \right) \beta   \wedge \omega_0,
\]
where $dv_X$ is the volume form defined in $\eqref{E:volume}$. We write $\ddbar'_{b}$ to denote the tangential Cauchy-Riemann operator acting on forms with values in $E^*$:
\[
\ddbar'_{b}:\Omega^{0,\bullet}(X, E^*)\To\Omega^{0,\bullet}(X,E^*).
\]
We can check that the adjoint of $\overline{\partial}_{b}$ is 
\begin{equation}
\overline{\partial}^{*}_{b} = - \left( \ast_b^{-1} \otimes \mu \right)^{-1} \overline{\partial}'_{b} \left( \ast_b \otimes \mu \right). \nonumber
\end{equation}

Let $\langle \, \cdot \, |\, \cdot \, \rangle_{s}$ and $\langle \, \cdot \, |\, \cdot \, \rangle_{E, s}, s \in [0, 1]$ be smooth families of rigid Hermitian metrics on $TX$ and $E$, respectively, such that 
$$
\langle \, \cdot \, |\, \cdot \, \rangle_{0} \, := \, \langle \, \cdot \, |\, \cdot \, \rangle, \quad \langle \, \cdot \, |\, \cdot \, \rangle_{1} \, := \, \langle \, \cdot \, |\, \cdot \, \rangle' \quad \text{and} \quad \langle \, \cdot \, |\, \cdot \, \rangle_{E, 0} \, := \, \langle \, \cdot \, |\, \cdot \, \rangle_E, \quad  \langle \, \cdot \, |\, \cdot \, \rangle_{E, 1} \, := \, \langle \, \cdot \, |\, \cdot \, \rangle'_E.
$$ 
Let $(\, \cdot \, |\, \cdot \, )_{E,s}$ be the $L^2$ inner products on $\Omega^{p,q}(X,E)$ induced by $\langle \, \cdot \, | \, \cdot \, \rangle_s$ and $\langle \, \cdot \, | \, \cdot \, \rangle_{E,s}$. Let $\ast_{b,s}$ be the tangential Hodge $\ast$-operators associated to the metrics $\langle \, \cdot \, |\, \cdot \, \rangle_{s}$ and $\mu_s$ be the induced conjugate linear bundle isomorphisms of $E$ and $E^*$ associated to the metric $\langle \, \cdot \, |\, \cdot \, \rangle_{E, s}$. Let 
$$
\Box_{b,s}:= \overline{\partial}_{b}\overline{\partial}^{\ast}_{b,s} + \overline{\partial}^{\ast}_{b,s}\overline{\partial}_b,
$$ 
where $\overline{\partial}^{\ast}_{b,s}$ denote the formal adjoint of $\overline{\partial}_b$ with respect to the $L^2$ scalar product $( \, \cdot \, | \, \cdot \, )_{E,s}$. We denote by 
$$
\Box_{b,m, s}:=\Box_{b, s}|_{\Omega^{0,\bullet}_m(X,E)}.
$$ 
Let $\| \cdot  \|_{\lambda_{b, m}(E), s}$ be the corresponding Quillen metrics  on $\det H^\bullet_{b,m}(X,E)$. Set 
\begin{equation}\label{E:qbs}
Q_{b, s} = - \left(\ast_{b,s} \otimes \mu_s \right)^{-1} \frac{\partial \left(\ast_{b, s} \otimes \mu_s \right)}{\partial s} = - \left(\ast^{-1}_{b, s} \frac{\partial \ast_{b, s}}{\partial s} + (\mu_s)^{-1} \frac{\partial \mu_s}{\partial s} \right).
\end{equation}
The following theorem is an analogue of \cite[Theorem 1.18]{BGS1} (cf. also \cite[Theorem 5.5.6]{MM}). 
\begin{theorem}\label{T:5.5.6}
 As $t \to 0^+$, for any $k \in \mathbb{N}$, there is an asymptotic expansion
\begin{equation}\label{E:5.5.18}
\operatorname{STr} \left\lbrack Q_{b,s} \exp \left( -t\Box_{b,m, s} \right) \right\rbrack = \sum_{j=0}^{k+2n} M_{-n+\frac{j}{2},s} t^{-n+\frac{j}{2}} +O(t^{\frac{k+1}{2}}), \nonumber
\end{equation}
where
\begin{equation}\label{E:5.5.19}
M_{0,s} = \frac{\partial}{\partial s} \log \left(  \| \cdot  \|^{2}_{\lambda_{b, m}(E), s} \right).
\end{equation}
\end{theorem}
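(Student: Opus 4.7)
The plan is to transplant the Bismut--Gillet--Soul\'e argument (\cite[Theorem 1.18]{BGS1}; see also \cite[Theorem 5.5.6]{MM}) to the transversal $S^1$-equivariant CR setting, relying on the heat-kernel asymptotics of \cite{CHT,HH}. Existence of the asymptotic expansion is essentially immediate: by \eqref{E:5.5.10b} the Schwartz kernel $e^{-t\Box_{b,m,s}}(x,x)$ has a pointwise expansion in half-integer powers $t^{-n+j/2}$ as $t\to 0^+$; multiplying by the smooth bundle endomorphism $Q_{b,s}$, taking fiberwise supertrace, and integrating against $dv_X$ preserves this structure and delivers the remainder $O(t^{(k+1)/2})$.

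The real content is the identification of the constant coefficient. Splitting, via Definition~\ref{D:5.5.5},
\[
\log\|\cdot\|^2_{\lambda_{b,m}(E),s} \;=\; \log|\cdot|^2_{\lambda_{b,m}(E),s} \;-\; \theta'_{b,m,s}(0),
\]
I would treat each piece separately. For the $L^2$ factor, since the groups $H^\bullet_{b,m}(X,E)$ are $s$-independent and Hodge theory identifies them with $\operatorname{Ker}\Box_{b,m,s}$, a standard linear-algebra computation using $Q_{b,s}=-A_s^{-1}\partial_s A_s$ with $A_s:=\ast_{b,s}\otimes\mu_s$ expresses $\partial_s \log|\cdot|^2_{\lambda_{b,m}(E),s}$ in terms of the supertrace $\operatorname{STr}[Q_{b,s}\,\Pi_{m,s}]$, where $\Pi_{m,s}$ projects onto $\operatorname{Ker}\Box_{b,m,s}$. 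For the torsion factor, I would differentiate
\[
\theta_{b,m,s}(z) \;=\; -\mathcal{M}\bigl[\operatorname{STr}[N\, e^{-t\Box_{b,m,s}}\Pi^\perp_{m,s}]\bigr]
\]
in $s$ via Duhamel's principle. The crucial input is the commutator identity $\partial_s\overline{\partial}^*_{b,s} = [Q_{b,s},\overline{\partial}^*_{b,s}]$, a direct consequence of the defining formula of $Q_{b,s}$. Combined with cyclicity of $\operatorname{STr}$ and the grading relations $[N,\overline{\partial}_b]=\overline{\partial}_b$, $[N,\overline{\partial}^*_{b,s}]=-\overline{\partial}^*_{b,s}$, it delivers the key formula
\[
\partial_s\operatorname{STr}[N\, e^{-t\Box_{b,m,s}}] \;=\; -\,t\,\frac{\partial}{\partial t}\operatorname{STr}[Q_{b,s}\, e^{-t\Box_{b,m,s}}].
\]
Restricting to $\Pi^\perp_{m,s}$, applying the Mellin transform, and using $\mathcal{M}[t\partial_t f](z) = -z\,\mathcal{M}[f](z)$ together with the first-order vanishing of $1/\Gamma(z)$ at $z=0$ (Lemma~\ref{L:meroext}, Theorem~\ref{t-gue160313}) then extracts $\partial_s\theta'_{b,m,s}(0)$ as the constant term of the asymptotic expansion of $\operatorname{STr}[Q_{b,s}e^{-t\Box_{b,m,s}}\Pi^\perp_{m,s}]$. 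Noting that $\operatorname{STr}[Q_{b,s}e^{-t\Box_{b,m,s}}\Pi_{m,s}]=\operatorname{STr}[Q_{b,s}\Pi_{m,s}]$ is $t$-constant and therefore sits entirely in the constant term of the full expansion, the two contributions combine to yield \eqref{E:5.5.19}.

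The hard part will be ensuring that the commutator manipulations and Duhamel/Mellin arguments remain valid in the non-elliptic CR setting. Concretely: (i) one must verify that $Q_{b,s}$, $\overline{\partial}^*_{b,s}$ and $e^{-t\Box_{b,m,s}}$ all preserve the rigid Fourier subspace $\Omega^{0,\bullet}_m(X,E)$, which is immediate from rigidity of all the data; (ii) the supertraces in the intermediate steps must be absolutely convergent, which follows from the smoothing property of $e^{-t\Box_{b,m,s}}$ for $t>0$; and (iii) the $t^{-n+j/2}$ singularities force one to invoke the meromorphic extension framework of Theorem~\ref{t-gue160313} in place of the simpler integer-power elliptic case. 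Signs arising from the conventions for $\ast_b$, $\mu$ (both conjugate linear) and the $\operatorname{STr}$ (with grading $(-1)^q$) must be tracked carefully throughout.
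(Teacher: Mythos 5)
Your proposal follows essentially the same route as the paper: the paper's proof is a one-line appeal to the small-time heat kernel expansion of \cite[Theorem 1.7]{CHT} together with a formal repetition of the argument of \cite[Theorem 1.18]{BGS1} (cf. \cite[Theorem 5.5.6]{MM}), and your sketch simply unfolds that argument — expansion from the pointwise kernel asymptotics, then identification of $M_{0,s}$ by splitting the Quillen metric into the $L^2$ piece (via $\operatorname{STr}[Q_{b,s}\Pi_{m,s}]$) and the torsion piece (via the Duhamel/commutator identity and the Mellin transform at $z=0$). This matches the intended proof, with your closing caveats (rigidity, smoothing, half-integer powers, sign conventions) being exactly the points where the CR setting is invoked.
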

\begin{proof}
By the small time asymptotic expansion for the heat kernel of the Kohn Laplacians in \cite[Theorem 1.7]{CHT} and proceeding formally as in the proof of \cite[Theorem 1.18]{BGS1}, we get \eqref{E:5.5.19}.
\end{proof}


\section{Anomaly formula of Analytic torsion on CR manifolds with $S^1$-action}

In this section we study the dependence of the analytic torsion under a change of the metrics. In Subsection \ref{SS:brt}, we recall the BRT trivializations from \cite{BRT85}. In Subsection \ref{SS:lochk}, we review the local heat kernels on the BRT trivializations. In Subsection \ref{SS:lochkpara}, we discuss certain local heat kernels depending on some parameters. In Subsection \ref{SS:const}, we derive the constant term of heat kernel asymptotics of the modified Kohn Laplacians on BRT trivializations. Finally, in Subsection \ref{SS:proof}, we give the proof of  our main theorem.

\subsection{BRT trivializations}\label{SS:brt}

To prove Theorem~\ref{T:main}, we need some preparations. We first need the following result due to Baouendi-Rothschild-Treves~\cite{BRT85}.

\begin{theorem}\label{t-gue150514}
For every point $x_0\in X$, we can find local coordinates $x=(x_1,\cdots,x_{2n+1})=(z,\theta)=(z_1,\cdots,z_{n},\theta), z_j=x_{2j-1}+ix_{2j},j=1,\cdots,n, x_{2n+1}=\theta$, defined in some small neighborhood $D=\{(z, \theta): \abs{z}<\delta, -\varepsilon_0<\theta<\varepsilon_0\}$ of $x_0$, $\delta>0$, $0<\varepsilon_0<\pi$, such that $(z(x_0),\theta(x_0))=(0,0)$ and 
\begin{equation}\label{e-can}
\begin{split}
&T=\frac{\partial}{\partial\theta} \nonumber \\
&Z_j=\frac{\partial}{\partial z_j}+i\frac{\partial\varphi}{\partial z_j}(z)\frac{\partial}{\partial\theta},j=1,\cdots,n \nonumber
\end{split}
\end{equation}
where $Z_j(x), j=1,\cdots, n$, form a basis of $T_x^{1,0}X$, for each $x\in D$ and $\varphi(z)\in C^\infty(D,\mathbb R)$ independent of $\theta$. We call $(D,(z,\theta),\varphi)$ BRT trivialization.
\end{theorem}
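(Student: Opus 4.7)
The plan is to construct the BRT coordinates in three stages: first straighten the Reeb field $T$ via its flow, then produce a $T$-equivariant frame of $T^{1,0}X$ on the resulting tube yielding the normal form $Z_j = \partial/\partial z_j + c_j(z,\bar z)\partial/\partial\theta$, and finally absorb the remaining freedom in $c_j$ into a real potential $\varphi$ by the local $\partial\bar\partial$-lemma and a shift in the $\theta$-coordinate.

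First, since the action is transversal and locally free, $T(x_0)\neq 0$, and I can choose a smooth $2n$-dimensional slice $S\subset X$ through $x_0$ with $T_{x_0}S = H_{x_0}X$. The map $(s,\theta)\mapsto e^{i\theta}\cdot s$ from $S\times(-\varepsilon_0,\varepsilon_0)$ is a diffeomorphism onto an open neighborhood of $x_0$ for some $0<\varepsilon_0<\pi$, and in these coordinates $T = \partial/\partial\theta$. Next, pick any smooth local frame $W_1,\dots,W_n$ of $T^{1,0}X$ over a neighborhood of $x_0$ in $S$ and propagate it by the flow: set $Z_j(e^{i\theta}\cdot s):= (de^{i\theta})_\ast W_j(s)$. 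The CR hypothesis $[T,\Gamma(T^{1,0}X)]\subset\Gamma(T^{1,0}X)$ implies that $(de^{i\theta})_\ast$ preserves $T^{1,0}X$, so $Z_j\in\Gamma(T^{1,0}X)$, and by construction $[T,Z_j]=0$; hence the components of each $Z_j$ in $(s,\theta)$ are independent of $\theta$.

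Let $\widetilde Z_j$ denote the projection of $Z_j$ to $\mathbb{C}TS$ (dropping its $\partial/\partial\theta$-component). Since $Z_j(x_0)\in T^{1,0}_{x_0}X\subset \mathbb{C}H_{x_0}X = \mathbb{C}T_{x_0}S$, one has $\widetilde Z_j(x_0)=Z_j(x_0)$, so the $\widetilde Z_j$ span an $n$-dimensional complex subspace of $\mathbb{C}T_{x_0}S$ with trivial intersection with its conjugate; by continuity they define an almost complex structure near $x_0$ on $S$. Writing out $[Z_j,Z_k]\in\Gamma(T^{1,0}X)$ and separating its $S$- and $\partial_\theta$-components shows $[\widetilde Z_j,\widetilde Z_k]\in\mathrm{span}\{\widetilde Z_l\}$, so this almost complex structure is formally integrable. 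Applying the Newlander--Nirenberg theorem on a smaller neighborhood and performing a $\mathbb{C}$-linear (in $z,\bar z$, and thus $\theta$-independent) change among the $Z_j$, I may assume $\widetilde Z_j = \partial/\partial z_j$, which yields
\[
Z_j = \frac{\partial}{\partial z_j} + c_j(z,\bar z)\frac{\partial}{\partial\theta},
\]
with each $c_j$ independent of $\theta$.

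Finally, expanding $[Z_j,Z_k]=\bigl(\partial c_k/\partial z_j - \partial c_j/\partial z_k\bigr)\partial_\theta$ and requiring this to lie in $\Gamma(T^{1,0}X)$ (whose elements have no pure $\partial_\theta$-component) forces $\partial c_j/\partial z_k = \partial c_k/\partial z_j$. The real one-form dual to $T$ then takes the form $\omega_0 = -d\theta + c_j\,dz_j + \overline{c_j}\,d\bar z_j$, and the previous identity implies $d\omega_0$ is a closed real $(1,1)$-form in the $z$-variables; the local $\partial\bar\partial$-lemma on a polydisk produces a real smooth $\varphi(z,\bar z)$ with $d\omega_0 = -2i\partial\bar\partial\varphi = d\bigl(i(\partial-\bar\partial)\varphi\bigr)$. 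Hence the difference $\eta := c_j\,dz_j + \overline{c_j}\,d\bar z_j - i(\partial-\bar\partial)\varphi$ is a closed real $1$-form on a contractible neighborhood, so $\eta = -d\psi$ for a real smooth function $\psi(z,\bar z)$. Setting $\theta' := \theta + \psi$ leaves $T=\partial/\partial\theta'$ and the $z$-coordinates unchanged, transforms $\omega_0$ into $-d\theta' + i(\partial-\bar\partial)\varphi$, and (by the chain rule) rewrites
\[
Z_j = \frac{\partial}{\partial z_j} + i\frac{\partial\varphi}{\partial z_j}\frac{\partial}{\partial\theta'},
\]
completing the construction. The main obstacle is the second stage: establishing that the projections $\widetilde Z_j$ genuinely assemble into an integrable complex structure on the slice, which requires combining the CR involutivity with the transversality hypothesis to verify both formal integrability and linear independence modulo conjugates at $x_0$. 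The closing potential-theoretic step, while it relies on the $\partial\bar\partial$-lemma, is routine once the preliminary normal form has been obtained.
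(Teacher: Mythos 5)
Your construction is correct, but it is worth noting that the paper itself offers no proof of this statement: Theorem~\ref{t-gue150514} is quoted verbatim as a known result of Baouendi--Rothschild--Treves~\cite{BRT85}, so there is no internal argument to compare against. What you have written is a legitimate self-contained derivation of the BRT normal form in the special case at hand (a transversal CR locally free $S^1$-action), and each of its three stages checks out. Stage one is the standard tube construction; the only point needing care is that $\varepsilon_0$ must be taken small enough that $(s,\theta)\mapsto e^{i\theta}\circ s$ is injective, which the local freeness guarantees. Stage two is the delicate one, and you handle it correctly: since the $Z_j$ and hence $[Z_j,Z_k]$ are flow-invariant, the coefficients $a_l$ in $[Z_j,Z_k]=\sum_l a_l Z_l$ are $\theta$-independent, so projecting to the slice gives $[\Td Z_j,\Td Z_k]=\sum_l a_l\Td Z_l$ and the Newlander--Nirenberg theorem applies; the linear independence of the $\Td Z_j$ modulo conjugates at $x_0$ is exactly where transversality and the choice $T_{x_0}S=H_{x_0}X$ enter, and it propagates by continuity. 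In stage three the computation $[Z_j,Z_k]=(\pr c_k/\pr z_j-\pr c_j/\pr z_k)\pr_\theta$ together with transversality (no nonzero element of $T^{1,0}X$ is a multiple of $T$) forces the symmetry of $\pr c_j/\pr z_k$, whence $d\omega_0$ is a real exact $(1,1)$-form and the local $\pr\ddbar$-lemma plus the shift $\theta'=\theta+\psi$ (normalized so $\psi(0)=0$ to keep $\theta'(x_0)=0$) produces the potential $\varphi$. The one computation a referee would want spelled out is the chain-rule identity $\pr/\pr z_j|_{\theta}=\pr/\pr z_j|_{\theta'}+\psi_{z_j}\pr/\pr\theta'$, which converts $c_j+\psi_{z_j}=i\varphi_{z_j}$ into the asserted form of $Z_j$; this is exactly what your relation $\eta=-d\psi$ delivers. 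In short: the paper cites, you prove, and your proof is sound.
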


By using BRT trivialization, we get another way to define $Tu, \forall u\in\Omega^{0,q}(X)$. Let $(D,(z,\theta),\varphi)$ be a BRT trivialization. It is clear that
$$\{d\overline{z_{j_1}}\wedge\cdots\wedge d\overline{z_{j_q}}, 1\leq j_1<\cdots<j_q\leq n\}$$
is a basis for $T^{\ast0,q}_xX$, for every $x\in D$. Let $u\in\Omega^{0,q}(X)$. On $D$, we write
\begin{equation}\label{e-gue150524fb}
u=\sum\limits_{1\leq j_1<\cdots<j_q\leq n}u_{j_1\cdots j_q}d\overline{z_{j_1}}\wedge\cdots\wedge d\overline{z_{j_q}}. \nonumber
\end{equation}
Then, on $D$, we can check that
\begin{equation}\label{lI}
Tu=\sum\limits_{1\leq j_1<\cdots<j_q\leq n}(Tu_{j_1\cdots j_q})d\overline{z_{j_1}}\wedge\cdots\wedge d\overline{z_{j_q}} \nonumber
\end{equation}
and $Tu$ is independent of the choice of BRT trivializations. Note that, on BRT trivialization $(D,(z,\theta),\varphi)$, we have 
\begin{equation}\label{e-gue150514f}
\ddbar_b=\sum^n_{j=1}d\ol z_j\wedge(\frac{\partial}{\partial\ol z_j}-i\frac{\partial\varphi}{\partial\ol z_j}(z)\frac{\partial}{\partial\theta}). \nonumber
\end{equation}


\subsection{Local heat kernels on BRT trivializations}\label{SS:lochk}

Until further notice, we fix $m\in\mathbb Z$. 
Let $B:=(D,(z,\theta),\varphi)$ be a BRT trivialization. We may assume that $D=U\times]-\varepsilon,\varepsilon[$, where $\varepsilon>0$ and $U$ is an open set of $\Complex^n$. Since $E$ is rigid, we can consider $E$ as a holomorphic vector bundle over $U$. We may assume that $E$ is trivial on $U$. Consider $L\To U$ be a trivial line bundle with non-trivial Hermitian fiber metric $\abs{1}^2_{h^L}=e^{-2\varphi}$. Let $(L^m,h^{L^m})\To U$ be the $m$-th power of $(L,h^L)$. For every $q=0,1,2,\ldots,n$, let $\Omega^{0,q}(U,E\otimes L^m)$ and $\Omega^{0,q}(U,E)$ be the spaces of $(0,q)$ forms on $U$ with values in $E\otimes L^m$ and $E$, respectively. Put 
\[
\begin{split}
&\Omega^{0,\bullet}(U,E\otimes L^m):=\oplus_{j\in\set{0,1,\ldots,n}}\Omega^{0,j}(U,E\otimes L^m), \nonumber \\
&\Omega^{0,\bullet}(U,E):=\oplus_{j\in\set{0,1,\ldots,n}}\Omega^{0,j}(U,E). \nonumber
\end{split}\]
Since $L$ is trivial, from now on, we identify $\Omega^{0,\bullet}(U,E)$ with $\Omega^{0,\bullet}(U,E\otimes L^m)$.
Since the Hermitian fiber metric $\langle\,\cdot\,|\,\cdot\,\rangle_E$ is rigid, we can consider $\langle\,\cdot\,|\,\cdot\,\rangle_E$ as a Hermitian fiber metric on the holomorphic vector bundle $E$ over $U$. 
Let $\langle\,\cdot\,,\,\cdot\,\rangle$ be the Hermitian metric on $\Complex TU$ given by 
\[
\langle\,\frac{\pr}{\pr z_j}\,,\,\frac{\pr}{\pr z_k}\,\rangle=\langle\,\frac{\pr}{\pr z_j}+i\frac{\pr\varphi}{\pr z_j}(z)\frac{\pr}{\pr\theta}\,|\,\frac{\pr}{\pr z_k}+i\frac{\pr\varphi}{\pr z_k}(z)\frac{\pr}{\pr\theta}\,\rangle,\ \ j,k=1,2,\ldots,n.
\]
Then $\langle\,\cdot\,,\,\cdot\,\rangle$ induces a Hermitian metric on $T^{*0,\bullet}U:=\oplus_{j=0}^nT^{*0,j}U$, where  $T^{*0,j}U$ is the bundle of $(0,j)$ forms on $U$, $j=0,1,\ldots,n$. We shall also denote the Hermitian metric by $\langle\,\cdot\,,\,\cdot\,\rangle$. The Hermitian metrics on $T^{*0,\bullet}U$ and $E$ induce a
Hermitian metric on $T^{*0,\bullet}U\otimes E$. We shall also denote this induced metric by $\langle\,\cdot\,|\,\cdot\,\rangle_{E}$.
Let $(\,\cdot\,,\,\cdot\,)$ be the $L^2$ inner product on $\Omega^{0,\bullet}(U,E)$ induced by $\langle\,\cdot\,,\,\cdot\,\rangle$ and $\langle\,\cdot\,|\,\cdot\,\rangle_E$. Similarly, let $(\,\cdot\,,\,\cdot\,)_m$ be the $L^2$ inner product on $\Omega^{0,\bullet}(U,E\otimes L^m)$ induced by $\langle\,\cdot\,,\,\cdot\,\rangle$, $\langle\,\cdot\,|\,\cdot\,\rangle_E$ and $h^{L^m}$. 

Let
\[
\ddbar:\Omega^{0,\bullet}(U,E\otimes L^m)\To\Omega^{0,\bullet}(U,E\otimes L^m)
\]
be the Cauchy-Riemann operator and let
\[
\ol{\pr}^{*,m}:\Omega^{0,\bullet}(U,E\otimes L^m)\To\Omega^{0,\bullet}(U,E\otimes L^m)
\] 
be the formal adjoint of $\ddbar$ with respect to $(\,\cdot\,,\,\cdot\,)_m$. Put 
\begin{equation}\label{e-gue150606II}
D_{B,m} \, := \, \ddbar+\ol{\pr}^{*,m}: \Omega^{0,\bullet}(U,E\otimes L^m)\To\Omega^{0,\bullet}(U,E\otimes L^m). \nonumber
\end{equation}
Let 
\begin{equation}\label{e-gue150606II-2}
D^*_{B,m} \, : \, \Omega^{0,\bullet}(U,E\otimes L^m)\To\Omega^{0,\bullet}(U,E\otimes L^m) \nonumber
\end{equation}
be the formal adjoint of $D_{B,m}$ with respect to $(\, \cdot \, , \,  \cdot \, )_m$. Then we denote by
\begin{equation}
\Box_{B,m} \, = \, D_{B,m}D^*_{B,m} :  \Omega^{0,\bullet}(U,E\otimes L^m)\To\Omega^{0,\bullet}(U,E\otimes L^m). \nonumber
\end{equation}
Put 
\begin{equation}\label{E:Dbm}
D_{b,m} \, := \, \ddbar_{b,m}+\ddbar^*_{b,m}: \Omega^{0,\bullet}_m(X,E)\To\Omega^{0,\bullet}_m(X,E). \nonumber
\end{equation}
Let 
\begin{equation}\label{E:Dbm2}
D^*_{b,m} \, : \, \Omega^{0,\bullet}_m(X,E)\To\Omega^{0,\bullet}_m(X,E). \nonumber
\end{equation}
be the formal adjoint of $D_{b,m}$ with respect to $(\, \cdot \, | \, \cdot \, )_E$.
Denote by 
\begin{equation}
\ast \, : \, \Omega^{p,q}(U) \to \Omega^{n-p, n-q}(U) \nonumber
\end{equation}
the Hodge $\ast$-operator associated to the Riemannian metric $\langle \, \cdot \, , \, \cdot \, \rangle$ and $\ast_s$ the Hodge $\ast$-operators associated to the Riemannian metrics $\langle \, \cdot \, , \, \cdot \, \rangle_{s}, s \in [0, 1]$. 
Let $\mu^{E }_s$ be the induced conjugate linear bundle isomorphism of $E$ and $E^*$ associated to the metrics $\langle \, \cdot \, | \, \cdot \, \rangle_{E, s}$ and $\mu^{L^m }$ be the induced conjugate linear bundle isomorphism of $L^m$ and $(L^m)^*$ associated to the metric $h^{L^m}$.
Set 
\begin{equation}\label{E:qs}
Q_{s} = - \left(\ast_{s} \otimes \mu^{E}_s \otimes \mu^{L^m} \right)^{-1} \frac{\partial \left(\ast_{s} \otimes \mu^{E}_s \otimes \mu^{L^m} \right)}{\partial s} 
\end{equation}

\subsection{Local heat kernels depending on parameters}\label{SS:lochkpara}

Let $F$ be a vector bundle over a compact manifold $Z$. Let $\vartheta_1, \cdots, \vartheta_i$ be auxiliary Grassmann variables. We assume that the multiplication of any $q+1$ variables of the above given Grassmann variables vanishes, where $q$ is some fixed integer. Let $R(\vartheta_1, \cdots, \vartheta_i)$ be the Grassmann algebra generated by $1, \vartheta_1, \cdots, \vartheta_i$, cf. \cite{BF} or \cite[Subsection 3.1]{LY}. If $\omega \in R(\vartheta_1, \cdots, \vartheta_i)$, then $\omega$ is a linear combination of $\vartheta_{i_1}, \cdots, \vartheta_{i_k}$, where $1 \le i_1 < \cdots < i_k \le i$. We say that the monomial $\vartheta_{i_1}\cdots \vartheta_{i_k}$ is of degree $k$. Clearly, $k \le q$. We define elements of $\Omega^\bullet(Z) \otimes F$ to be of degree zero and give every monomial of $\Omega^\bullet(Z) \otimes \operatorname{End}(F) \widehat{\otimes} R(\vartheta_1, \cdots, \vartheta_i),$ say $\phi_{i_1 \cdots i_k}\vartheta_{i_1}\cdots \vartheta_{i_k}$, where $\phi_{i_1 \cdots i_k} \in \Omega^\bullet(Z) \otimes \operatorname{End}(F)$, a natural degree. Let $\left( \mathcal{B}, \| \cdot \| \right)$ be a normed space. We now introduce a norm on $\| \cdot \|_{\mathcal{B}\otimes R}$ on $\mathcal{B} \otimes R(\vartheta_1, \cdots, \vartheta_i)$ as follows. For $1 \le i_1 < \cdots < i_k \le i $,
\[
\phi \,  = \, \sum_{1\le k \le q}\phi_{i_1 \cdots i_k} \vartheta_{i_1} \cdots \vartheta_{i_k} \in \mathcal{B} \otimes R(\vartheta_{1}, \cdots, \vartheta_{i}),
\]
we define
\begin{equation}\label{E:norm}
\| \phi \|_{\mathcal{B}\otimes R} \, = \, \displaystyle{\operatorname{max}}_{_{1\le k \le q}} \| \phi_{i_1 \cdots i_k} \|.
\end{equation}


Now let $da, d\bar{a}$ be two odd Grassmann variables. Let $\eta \in \wedge^\bullet \mathbb{C}T^*U \widehat{\otimes} \mathbb{C}(da, d\bar{a})$, then $\eta$ can be written in the form
\begin{equation}
\eta \, = \, \eta_0 + da \eta_1 + d\bar{a} \eta_2 + dad\bar{a} \eta_3, \qquad \text{where} \ \eta_i \in \wedge^\bullet \mathbb{C}T^*U, \  0 \le i \le 3 \nonumber
\end{equation} 
and  we set 
\begin{equation}
(\eta)^{dad\bar{a}} \, = \, \eta_3. \nonumber
\end{equation} 
We will also identify $\eta$ as an element in $\wedge^\bullet \mathbb{C}T^*D \widehat{\otimes} \mathbb{C}(da, d\bar{a})$ naturally. We denote by
\begin{equation}
L_{b,m,t} \, = \, t \Box_{b,m} + \sqrt{\frac{t}{2}} da D_{b,m} + \sqrt{\frac{t}{2}} d\bar{a}[D_{b,m}, Q_b] -dad\bar{a}Q_b, \nonumber
\end{equation}
where $Q_b \, := \, Q_{b, 0}$ as defined in \eqref{E:qbs}.
Proceeding formally as in \cite[Theorem 1.20]{BGS3}, we obtain
\begin{proposition}\label{P:strqbm}
The following identity holds:
\begin{equation}
\frac{\partial}{\partial t} \left(\, t \operatorname{STr}  \left[ \, Q  \exp \left(\,-t\Box_{b,m} \, \right) \, \right] \, \right) \, =\, \operatorname{STr}  \left[ \, \exp \left( \, -L_{b,m,t} \, \right) \, \right]^{dad\bar{a}}. \nonumber
\end{equation}
\end{proposition}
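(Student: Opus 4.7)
The plan is to expand $\exp(-L_{b,m,t})$ by Duhamel's formula around the unperturbed operator $\exp(-t\Box_{b,m})$ and then extract the $da\,d\bar a$-component. Writing $L_{b,m,t}=tA+\Psi$ with $A=\Box_{b,m}$ and
\[
\Psi = \sqrt{t/2}\, da\, D_{b,m} + \sqrt{t/2}\, d\bar a\, [D_{b,m},Q_b] - da\, d\bar a\, Q_b,
\]
every monomial of $\Psi$ carries at least one Grassmann factor, and since $da$, $d\bar a$ are nilpotent and anticommute, the Duhamel series
\[
\exp(-L_{b,m,t}) = \sum_{k\ge 0}(-1)^k\int_{\Delta_k} e^{-s_0 tA}\Psi\, e^{-s_1 tA}\cdots \Psi\, e^{-s_k tA}\, ds
\]
terminates after finitely many terms. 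Its $da\,d\bar a$-coefficient receives contributions from exactly two types of terms: (i) a single insertion of $-da\,d\bar a\,Q_b$ at level $k=1$; and (ii) two insertions at level $k=2$, one $\sqrt{t/2}\,da\,D_{b,m}$ and one $\sqrt{t/2}\,d\bar a\,[D_{b,m},Q_b]$, in either order.

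Evaluating (i), Duhamel produces
\[
da\,d\bar a\int_0^1 e^{-st\Box_{b,m}} Q_b\, e^{-(1-s)t\Box_{b,m}}\,ds,
\]
whose supertrace collapses by cyclicity of $\operatorname{STr}$ to $\operatorname{STr}[\,Q_b\,e^{-t\Box_{b,m}}\,]$. For (ii), one must first pull the Grassmann factors to the front of the product, picking up definite signs from anticommuting $d\bar a$ with the odd operator $D_{b,m}$ (the heat factors $e^{-s_i tA}$ are even and commute with $da$, $d\bar a$). Summing the two orderings, the $da\,d\bar a$-coefficient becomes a sum of iterated integrals over the $2$-simplex of the schematic form $e^{-s_0 tA} D_{b,m} e^{-s_1 tA}[D_{b,m},Q_b] e^{-s_2 tA}$. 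Using $D_{b,m}^2=\Box_{b,m}$, cyclicity of $\operatorname{STr}$, the parabolic identity $\partial_s e^{-stA}=-tAe^{-stA}$ and an integration by parts in the simplex variables, these integrals telescope to $-t\operatorname{STr}[\,Q_b\,\Box_{b,m}\,e^{-t\Box_{b,m}}\,]$.

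Combining (i) and (ii),
\[
\operatorname{STr}\bigl[\exp(-L_{b,m,t})\bigr]^{da\,d\bar a} = \operatorname{STr}[\,Q_b\,e^{-t\Box_{b,m}}\,] - t\operatorname{STr}[\,Q_b\,\Box_{b,m}\,e^{-t\Box_{b,m}}\,],
\]
and the right-hand side equals $\partial_t\!\bigl(t\operatorname{STr}[\,Q_b\,e^{-t\Box_{b,m}}\,]\bigr)$, which is the claimed identity.

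The main obstacle is the Grassmann sign bookkeeping in step (ii): one must fix a consistent convention for how the odd generators $da$, $d\bar a$ super-commute past the $\mathbb Z_2$-graded operators $D_{b,m}$ and $\Box_{b,m}$, and then verify that the two orderings $B_1 B_2$ and $B_2 B_1$ combine so that the signs line up with the derivative identity for $e^{-stA}$. Once the signs are consistent, the reduction of the iterated simplex integral to a single trace is a routine application of trace cyclicity and Stokes on the $2$-simplex.
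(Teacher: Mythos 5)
Your proposal is correct and takes essentially the same route as the paper, whose proof of this proposition is simply to proceed formally as in \cite[Theorem 1.20]{BGS3} --- exactly the Duhamel expansion with auxiliary Grassmann variables that you write out. Your identification of the two contributing insertion patterns and the resulting identity $\operatorname{STr}\bigl[\exp(-L_{b,m,t})\bigr]^{dad\bar a}=\operatorname{STr}\bigl[Q_b e^{-t\Box_{b,m}}\bigr]-t\operatorname{STr}\bigl[Q_b\Box_{b,m}e^{-t\Box_{b,m}}\bigr]=\partial_t\bigl(t\operatorname{STr}\bigl[Q_b e^{-t\Box_{b,m}}\bigr]\bigr)$ are consistent with the signs imposed by the graded tensor product $\widehat{\otimes}\,\mathbb{C}(da,d\bar a)$.
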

We then set
\begin{equation}
L_{B,m,t} \, = \, t \Box_{B,m} + \sqrt{\frac{t}{2}} da D_{B,m} + \sqrt{\frac{t}{2}} d\bar{a}[D_{B,m}, Q] -dad\bar{a}Q, \nonumber
\end{equation}
where $Q \, := \, Q_0$ as defined in \eqref{E:qs}.

We have the following result (see also Lemma 5.1 in~\cite{CHT}). 

\begin{lemma}\label{l-gue150606}
Let $u\in\Omega^{0,\bullet}_m(X,E)$. On $D$, we write $u(z,\theta)=e^{im\theta}\Td u(z)$, $\Td u(z)\in\Omega^{0,\bullet}(U,E)$. Then,
\begin{equation}\label{e-gue150606III}
e^{-m\varphi}L_{B,m, t}(e^{m\varphi}\Td u)=e^{-im\theta}L_{b,m,t}(u). \nonumber
\end{equation}
\end{lemma}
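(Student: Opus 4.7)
The plan is to reformulate the claimed identity as an intertwining relation under a locally defined isometry, and then verify it for each of the four operator building blocks of $L_{\bullet,m,t}$. Define a map $\mathcal{A}\colon \Omega^{0,\bullet}_m(X,E)|_D \to \Omega^{0,\bullet}(U, E\otimes L^m)$ by $\mathcal{A}(e^{im\theta}\widetilde u) := e^{m\varphi}\widetilde u$. Because the rigid metric on $\mathbb{C}TX$ restricts along the BRT coordinates to the metric on $\mathbb{C}TU$ prescribed in Subsection~\ref{SS:lochk}, and because the weight $e^{-2m\varphi}$ of $h^{L^m}$ exactly compensates the factor $e^{2m\varphi}$ from $|e^{m\varphi}\widetilde u|^2$, the map $\mathcal{A}$ is a (scaling) isometry between $L^2_m(D, T^{*0,\bullet}X \otimes E)$ and $(L^2(U, T^{*0,\bullet}U \otimes E \otimes L^m), (\cdot,\cdot)_m)$. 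The asserted equality then rewrites as $\mathcal{A} \circ L_{b,m,t} = L_{B,m,t} \circ \mathcal{A}$; since the Grassmann variables $da, d\bar a$ commute with everything in sight, it suffices to prove
\[
\mathcal{A}\, D_{b,m} = D_{B,m}\,\mathcal{A}, \qquad \mathcal{A}\, Q_b = Q\, \mathcal{A}.
\]
The intertwining for $\Box$ follows by squaring the first, and that for $[D, Q]$ follows by combining the two.

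The first relation reduces to a direct BRT computation. Using $\overline\partial_b = \sum_j d\bar z_j \wedge (\partial_{\bar z_j} - i\partial_{\bar z_j}\varphi\,\partial_\theta)$ and $\partial_\theta(e^{im\theta}\widetilde u) = im\, e^{im\theta}\widetilde u$,
\[
\overline\partial_b(e^{im\theta}\widetilde u) \,=\, e^{im\theta}\sum_j d\bar z_j \wedge \bigl(\partial_{\bar z_j} + m\partial_{\bar z_j}\varphi\bigr)\widetilde u \,=\, e^{im\theta}\,e^{-m\varphi}\,\overline\partial(e^{m\varphi}\widetilde u),
\]
so $\mathcal{A}\overline\partial_b = \overline\partial\mathcal{A}$ on $D$. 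Taking formal adjoints with respect to the two inner products --- which are matched by $\mathcal{A}$ --- one obtains $\mathcal{A}\overline\partial_b^* = \overline\partial^{*,m}\mathcal{A}$, hence $\mathcal{A}D_{b,m} = D_{B,m}\mathcal{A}$. This is the natural extension of \cite[Lemma~5.1]{CHT} to forms with values in the rigid bundle $E$. For the $Q$-intertwining, note that both $\ast_{b,s}\otimes\mu_s$ and $\ast_s\otimes\mu^E_s\otimes\mu^{L^m}$ are characterized implicitly by the respective integration identities $(\alpha,\beta)_{E,s} = \int_X \alpha\wedge(\ast_{b,s}\otimes\mu^E_s)\beta\wedge\omega_0$ (using the BRT decomposition $dv_X = \omega_0\wedge (d\omega_0)^n/n!$) and its $U$-analogue with the $h^{L^m}$ weight. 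The isometry property of $\mathcal{A}$ therefore forces $\mathcal{A}(\ast_{b,s}\otimes\mu_s) = (\ast_s\otimes\mu^E_s\otimes\mu^{L^m})\mathcal{A}$ for every $s \in [0,1]$. Differentiating this identity in $s$ (using that $\mathcal{A}$ and $\mu^{L^m}$ are both $s$-independent) and left-multiplying by the inverse then yields $\mathcal{A}Q_b = Q\mathcal{A}$.

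Combining these intertwinings term by term --- with the scalar Grassmann coefficients $t$, $\sqrt{t/2}\,da$, $\sqrt{t/2}\,d\bar a$, $-dad\bar a$ appearing identically in $L_{b,m,t}$ and $L_{B,m,t}$ --- completes the proof. The main technical obstacle I anticipate is the identity $\mathcal{A}(\ast_{b,s}\otimes\mu_s) = (\ast_s\otimes\mu^E_s\otimes\mu^{L^m})\mathcal{A}$: tangential Hodge stars, full Hodge stars, the conjugate linear isomorphisms of $E$, and the weight $e^{-2m\varphi}$ all have to be matched consistently, which requires carefully unpacking the BRT decomposition of $dv_X$ together with the complex-conjugate-linear behavior of $\ast_b$ under multiplication by $e^{im\theta}$. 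Once this linear-algebraic matching is in hand, the rest is a formal consequence of the isometry property.
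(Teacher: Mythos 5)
Your overall strategy is the one the paper has in mind: the paper itself gives no argument beyond the pointer to Lemma 5.1 of \cite{CHT}, and your BRT computation $\ddbar_b(e^{im\theta}\Td u)=e^{im\theta}e^{-m\varphi}\ddbar(e^{m\varphi}\Td u)$, together with the observation that the weight $e^{-2m\varphi}$ of $h^{L^m}$ exactly cancels the factor $e^{2m\varphi}$ in the $L^2$ pairings, is precisely the mechanism behind that lemma. Squaring handles the $\Box$-term, the Grassmann coefficients $t$, $\sqrt{t/2}\,da$, $\sqrt{t/2}\,d\bar a$, $-da\,d\bar a$ are identical on both sides, and for the adjoint step you should either do the explicit integration by parts in BRT coordinates or simply invoke \cite[Lemma 5.1]{CHT} (as you do), since $m$-equivariant forms are not compactly supported in the $\theta$-direction of $D$ and the ``matched inner products'' argument needs that small localization remark.

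The step that does not hold together as written is the $Q$-part. The identity $\mathcal{A}(\ast_{b,s}\otimes\mu_s)=(\ast_s\otimes\mu^E_s\otimes\mu^{L^m})\mathcal{A}$ does not typecheck with your $\mathcal{A}$: both $\ast_{b,s}\otimes\mu_s$ and $\ast_s\otimes\mu^E_s\otimes\mu^{L^m}$ are conjugate linear, hence send the $m$-th Fourier component to the $(-m)$-th one, and they take values in $(n,n-q)$-forms with coefficients in $E^*$, respectively $E^*\otimes(L^m)^*$ whose fibre weight is $e^{+2m\varphi}$; your $\mathcal{A}$ is only defined on $m$-equivariant $(0,\bullet)$-forms with values in $E$, so neither composition is defined, and the companion map you would need on the image side must carry the opposite equivariance $e^{-im\theta}$ and the opposite weight $e^{-m\varphi}$. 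Your closing remark defers exactly this verification, so the key new term of the lemma (relative to \cite[Lemma 5.1]{CHT}) is asserted rather than proved. It is in fact easier than your route suggests: since $h^{L^m}$ is independent of $s$, \eqref{E:qs} collapses to $Q_s=-\left(\ast_s^{-1}\frac{\pr\ast_s}{\pr s}+(\mu^E_s)^{-1}\frac{\pr\mu^E_s}{\pr s}\right)$, which is formally the same zeroth-order bundle endomorphism as $Q_{b,s}$ in \eqref{E:qbs} under the pointwise BRT identification of $T^{*0,\bullet}_{(z,\theta)}X\otimes E$ with $T^{*0,\bullet}_zU\otimes E$ (the rigid, $\theta$-independent metric data on $X$ are by construction the metric data used on $U$, so the two families of stars and of $\mu$'s agree fibrewise on $(0,\bullet)$-forms after composing, where the conjugations cancel). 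Being zeroth order, $Q$ commutes with multiplication by the scalars $e^{im\theta}$ and $e^{m\varphi}$, giving $Q_b(e^{im\theta}\Td u)=e^{im\theta}e^{-m\varphi}Q(e^{m\varphi}\Td u)$ directly; with this replacement (or with the companion map written out and the $s$-differentiation done as you indicate), your term-by-term argument, including the $[D,Q]$-term obtained from the two separate intertwinings, does yield the lemma.
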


Let $z, w\in U$ and let 
$$
T(z,w)\in \left((T^{*0,\bullet}_wU\otimes E_w)^*\boxtimes(T^{*0,\bullet}_zU\otimes E_z)\right) \widehat{\otimes} \mathbb{C}(da, d\bar{a}).
$$ 
We write $\abs{T(z,w)}$ to denote the standard pointwise matrix norm of $T(z,w)$ induced by $\langle\,\cdot\,|\,\cdot\,\rangle_E$ as in \eqref{E:norm}. Let $\Omega^{0,\bullet}_0(U,E)$ be the subspace of $\Omega^{0,\bullet}(U,E)$ whose elements have compact support in $U$. Let $dv_U$ be the volume form on $U$ induced by $\langle\,\cdot\,,\,\cdot\,\rangle$. Assume 
$$
T(z,w)\in C^\infty(U\times U,(T^{*0,\bullet}_wU\otimes E_w)^*\boxtimes(T^{*0,\bullet}_zU\otimes E_z))\widehat{\otimes} \mathbb{C}(da, d\bar{a}).
$$
Let $u\in\Omega^{0,\bullet}_0(U,E)$. We define the integral 
$$
\int T(z,w)u(w)dv_U(w)
$$ 
in the standard way. For any $t>0$, let 
$$
G(\hat{t},t,z,w)\in C^\infty(\Real_+\times U\times U,(T^{*0,\bullet}_wU\otimes E_w)^*\boxtimes(T^{*0,\bullet}_zU\otimes E_z))\widehat{\otimes} \mathbb{C}(da, d\bar{a}).
$$ 
For any $t>0$, we write $G(\,\hat{t}\,)$ to denote the continuous operator
\[
\begin{split}
G(\, \hat{t} \,):\Omega^{0,\bullet}_0(U,E)\widehat{\otimes} \mathbb{C}(da, d\bar{a})&\To\Omega^{0,\bullet}(U,E)\widehat{\otimes} \mathbb{C}(da, d\bar{a}), \\
u&\To\int G(\hat{t},t,z,w)u(w)dv_U(w)\end{split}
\]
and we write
$G'(\, \hat{t} \,)$ to denote the continuous operator
\[
\begin{split}
G'(\, \hat{t} \,):\Omega^{0,\bullet}_0(U,E)\widehat{\otimes} \mathbb{C}(da, d\bar{a})&\To\Omega^{0,\bullet}(U,E)\widehat{\otimes} \mathbb{C}(da, d\bar{a}),\\
u&\To\int \frac{\pr G(\hat{t},t,z,w)}{\pr \hat{t}}u(w)dv_U(w).\end{split}
\]

We have the following theorem, cf. \cite[Section 3]{LY},
\begin{theorem}\label{t-gue150607}
For any $t>0$, there is 
$$
A_{B,m}(\hat{t},t,z,w)\in C^\infty(\Real_+\times U\times U, (T^{*0,\bullet}_wU\otimes E_w)^*\boxtimes(T^{*0,\bullet}_zU\otimes E_z))\widehat{\otimes} \mathbb{C}(da, d\bar{a})
$$ 
such that 
\[
\begin{split}
&\mbox{$\lim_{\hat{t} \To0+}A_{B,m}(\hat{t})=I$ in $D'(U,T^{*0,\bullet}U\otimes E)\widehat{\otimes} \mathbb{C}(da, d\bar{a})$},\\
&A'_{B,m}(\hat{t})u+A_{B,m}(\hat{t})(L_{B,m,t}u)=0,\ \ \forall u\in\Omega^{0,\bullet}_0(U,E),\ \ \forall \hat{t}>0,
\end{split}
\]
and $A_{B,m}(\hat{t}, t,z,w)$ satisfies the following: 
(I)  For every small $t>0$, every compact set $K\Subset U$ and every $\alpha_1, \alpha_2, \beta_1, \beta_2\in\mathbb N^n_0$, every $\gamma \in \mathbb{N}_0$,  there are constants $C_{\gamma, \alpha_1,\alpha_2,\beta_1,\beta_2,K}>0$, $\varepsilon_0>0$, $P \in \mathbb{N}$ independent of $t$such that
\begin{equation}\label{e-gue160128w}
\begin{split}
&|\pr^{\gamma}_{\hat{t}}\pr^{\alpha_1}_z\pr^{\alpha_2}_{\ol z}\pr^{\beta_1}_w\pr^{\beta_2}_{\ol w} A_{B,m}(1,t,z,w) |\\
&\leq C_{\alpha_1,\alpha_2,\beta_1,\beta_2,K}t^{-P}e^{-\varepsilon_0\frac{\abs{z-w}^2}{t}},\ \  \forall (t, z,w)\in \Real_+\times K\times K.
\end{split}
\end{equation}
(II) 
$A_{B,m}(1,t,z,w)$ admits an asymptotic expansion:
\begin{eqnarray}\label{E:asymptotic}
&& A_{B,m}(1,t,z,w)  =  e^{-\frac{h(z,w)}{t}}K_{B,m}(1,t,z,w),   \\
&& K_{B,m}(1,t,z,w)  \sim t^{-1}a_{-1}(z,w) + a_0(z,w) + ta_{1}(z,w) +\cdots \ \text{as} \ t \to 0^+, \nonumber \\
&& a_j(z,w) \in C^\infty \left(\, U \times U , (T_w^{*0,\bullet}U \otimes E_w)^* \otimes (T_z^{\ast0,\bullet}U \otimes E_z) \, \right) \widehat{\otimes} \mathbb{C}(da, d\bar{a}), \quad j=-1, 0, 1, \cdots, \nonumber
\end{eqnarray}
where $h(z,w)\in C^\infty(U\times U)$ and for every compact set $K\Subset U$, there is a constant $C>1$ such that $\frac{1}{C}\abs{z-w}^2\leq h(z,w)\leq C\abs{z-w}^2$, for all $(z,w)\in K\times K$. 
\end{theorem}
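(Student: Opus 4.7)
The plan is to construct $A_{B,m}(\hat{t},t,z,w)$ by a Duhamel expansion around the ordinary heat kernel of $t\Box_{B,m}$, exploiting the nilpotency of the Grassmann variables $da,d\bar a$ so that this expansion terminates. Concretely, write
\[
L_{B,m,t}\;=\;t\Box_{B,m}+R_{B,m,t},\qquad R_{B,m,t}\;:=\;\sqrt{\tfrac{t}{2}}\,da\,D_{B,m}+\sqrt{\tfrac{t}{2}}\,d\bar a\,[D_{B,m},Q]-da\,d\bar a\,Q.
\]
Since every monomial in $da,d\bar a$ of degree $>2$ vanishes, the formal Duhamel series
\[
A_{B,m}(\hat t)\;=\;e^{-\hat t\,t\Box_{B,m}}+\sum_{k\ge1}(-1)^{k}\!\!\int_{\Delta_{k}(\hat t)}\!\!e^{-(\hat t-s_{1})t\Box_{B,m}}R_{B,m,t}\cdots R_{B,m,t}\,e^{-s_{k}t\Box_{B,m}}\,ds
\]
is a finite sum in $k\le 2$ (after discarding terms whose Grassmann part vanishes). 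The initial condition $\lim_{\hat t\to 0^{+}}A_{B,m}(\hat t)=I$ and the heat-type equation $A'_{B,m}(\hat t)+A_{B,m}(\hat t)L_{B,m,t}=0$ then follow by direct differentiation of the Duhamel formula, exactly as in the analogous construction in \cite[\S3]{LY}; this gives the first two displayed properties.

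For the regularity statement (I), I would combine two ingredients. First, since $\Box_{B,m}$ is a second order, elliptic, non-negative operator on $U$ with values in the holomorphic bundle $E\otimes L^{m}$, its heat kernel $e^{-\hat t\,t\Box_{B,m}}(z,w)$ satisfies the standard off-diagonal Gaussian bounds
\[
\bigl|\pr^{\gamma}_{\hat t}\pr^{\alpha_1}_z\pr^{\alpha_2}_{\bar z}\pr^{\beta_1}_w\pr^{\beta_2}_{\bar w}\,e^{-\hat t t\Box_{B,m}}(z,w)\bigr|\;\le\;C\,t^{-P}\,e^{-\varepsilon_{0}\abs{z-w}^{2}/t}
\]
on compacta, uniformly for $\hat t$ in a neighborhood of $1$; see e.g.\ the local estimates used in \cite[\S1.6]{MM} or \cite{CHT}. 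Second, the coefficients appearing in $R_{B,m,t}$ are local first order differential operators with smooth coefficients, so applying Duhamel's convolutions preserves the Gaussian bound (the exponents combine via Laplace's method, and at most $t^{1/2}$ weights are introduced by each $R$-factor), possibly with a larger $P$ and a smaller $\varepsilon_{0}$. Setting $\hat t=1$ gives \eqref{e-gue160128w}.

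For the asymptotic expansion (II), the strategy is to invoke the small-time expansion of the heat kernel of $t\Box_{B,m}$ on the complex manifold $U$ with values in the holomorphic bundle $E\otimes L^{m}$, which takes precisely the form $e^{-h(z,w)/t}\bigl(t^{-n}\tilde a_{-n}(z,w)+t^{-n+1}\tilde a_{-n+1}(z,w)+\cdots\bigr)$, with $h(z,w)$ a smooth phase function behaving like $\abs{z-w}^{2}$ on the diagonal. (The statement in the theorem is phrased with leading order $t^{-1}$ because we are interested in the relevant Grassmann-valued contributions after taking the $dad\bar a$-component and supertrace; see Proposition \ref{P:strqbm}.) Substituting this expansion into each term of the finite Duhamel sum and using stationary phase in the convolution variables produces the claimed form \eqref{E:asymptotic}, with $h(z,w)$ inherited from the heat kernel phase and the coefficients $a_{j}(z,w)$ polynomial in the Grassmann variables with smooth coefficients.

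The main obstacle is the stationary-phase analysis used to expand the convolutions $e^{-(\hat t-s)t\Box_{B,m}}\star R_{B,m,t}\star e^{-st\Box_{B,m}}$: one must verify that integration in the intermediate variable against a Gaussian of width $\sqrt{t}$ does not destroy the homogeneous $t^{-1}$ structure, and that the resulting phase still equals the original $h(z,w)/t$ up to terms absorbable in the amplitudes. This is carried out by the same local Laplace-method argument used in \cite[\S3]{LY} for the K\"ahler case, adapted here by the fact that the BRT trivialization reduces the CR problem for $\Box_{b,m}$ to a Hermitian (but not necessarily K\"ahler) problem for $\Box_{B,m}$ on $U$, for which the Kodaira Laplacian heat kernel asymptotics of \cite[\S4.2]{MM} apply without change.
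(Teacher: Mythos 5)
The paper offers no proof of this theorem beyond the citation to \cite[Section 3]{LY}, and your construction --- the Duhamel/Volterra series around $e^{-\hat t\,t\Box_{B,m}}$, terminating because of the nilpotency of $da,d\bar a$, combined with Gaussian off-diagonal estimates for the local heat kernel and the standard small-time expansion of the Kodaira Laplacian on $U$ --- is precisely the standard argument that citation stands for, so you are taking essentially the same route as the paper. The one point to flag is part (II): your construction yields a leading power $t^{-n}$, and the parenthetical appeal to the $da\,d\bar a$-component and the supertrace does not actually establish the $t^{-1}$ stated in \eqref{E:asymptotic}; this mismatch sits in the statement itself rather than in your argument, and what is actually used later (Subsection \ref{SS:const}) is only the identification of the constant coefficient $\operatorname{STr}a_0(z,z)$, which your expansion does provide.
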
 

Assume that $X=D_1\bigcup D_2\bigcup\cdots\bigcup D_N$, where $B_j:=(D_j,(z,\theta),\varphi_j)$ is a BRT trivialization, for each $j$. We may assume that for each $j$, 
$$
D_j=U_j\times]-2\delta_j,2\Td\delta_j[\subset\Complex^n\times\Real, \quad \delta_j>0, \quad \Td\delta_j>0, \quad U_j=\set{z\in\Complex^n;\, \abs{z}<\gamma_j}.
$$ 
For each $j$, put 
$$
\hat D_j=\hat U_j\times]-\frac{\delta_j}{2},\frac{\Td\delta_j}{2}[,
$$ 
where $\hat U_j=\set{z\in\Complex^n;\, \abs{z}<\frac{\gamma_j}{2}}$. We may suppose that 
$$
X=\hat D_1\bigcup\hat D_2\bigcup\cdots\bigcup\hat D_N.
$$ 
Let $\chi_j\in C^\infty_0(\hat D_j)$, $j=1,2,\ldots,N$, with $\sum^N_{j=1}\chi_j=1$ on $X$. Fix $j=1,2,\ldots,N$. Put 
\[K_j=\set{z\in\hat U_j;\, \mbox{there is a $\theta\in]-\frac{\delta_j}{2},\frac{\Td\delta_j}{2}[$ such that $\chi_j(z,\theta)\neq0$}}.\]
Let $\tau_j(z)\in C^\infty_0(\hat U_j)$ with $\tau_j\equiv1$ on some neighborhood $W_j$ of $K_j$. Let $\sigma_j\in C^\infty_0(]-\frac{\delta_j}{2},\frac{\Td\delta_j}{2}[)$ with $\int\sigma_j(\theta)d\theta=1$. For any $t>0$, let 
$$
A_{B_j,m}(\hat{t}, t,z,w)\in C^\infty(\Real_+\times U_j\times U_j,(T^{*0,\bullet}_wU_j\otimes E_w)^*\boxtimes(T^{*0,\bullet}_zU_j\otimes E_z))
$$ 
be as in Theorem~\ref{t-gue150607}. For any $t>0$, put 
\begin{equation}\label{e-gue150627f}
H_{j,m}(\hat{t}, t,x,y)=\chi_j(x)e^{-m\varphi_j(z)+im\theta}A_{B_j,m}(\hat{t}, t,z,w)e^{m\varphi_j(w)-im\eta}\tau_j(w)\sigma_j(\eta),
\end{equation}
where $x=(z,\theta)$, $y=(w,\eta)\in\Complex^{n}\times\Real$. Let 
\begin{equation}\label{e-gue150626fIII}
\begin{split}
\Gamma_m(\hat{t}, t,x,y):=\frac{1}{2\pi}\sum^N_{j=1}\int^\pi_{-\pi}H_{j,m}(\hat{t}, t,x,e^{iu}\circ y)e^{imu}du.
\end{split}
\end{equation}
From Lemma~\ref{l-gue150606}, off-diagonal estimates of $A_{B_j,m}(\hat{t}, t,x,y)$ (see \eqref{e-gue160128w} and \eqref{E:asymptotic}), we can repeat the proof of Theorem 5.11 in~\cite{CHT} with minor change and deduce that 

\begin{theorem}\label{t-gue150630I}
For every $\ell\in\mathbb N$, $\ell\geq2$, and every $\epsilon>0$, there are $\epsilon_0>0$ independent of $t$ such that
\[
\norm{e^{-L_{b,m, t}}(x,y)-\Gamma_m(1, t,x,y)}_{C^\ell(X\times X)}\leq e^{-\frac{\epsilon_0}{t}},\ \ \forall t\in(0,\epsilon).
\]
\end{theorem}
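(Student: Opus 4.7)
The plan is to treat $\Gamma_m(\hat{t}, t, x, y)$ as a parametrix for the heat kernel of $L_{b,m,t}$, verify that it satisfies the heat equation in $\hat{t}$ and the initial condition as $\hat{t} \to 0^+$ up to an error of order $e^{-c/t}$, and then invoke Duhamel's principle to upgrade this into a genuine $C^\ell$ approximation bound. This is the standard parametrix strategy carried out in \cite[Section 5]{CHT} for the heat kernel of $\Box_{b,m}$; the only new ingredient here is that $L_{b,m,t}$ carries the Grassmann variables $da, d\bar{a}$ and the additional terms $\sqrt{t/2}\, d\bar{a}\,[D_{b,m}, Q_b] - dad\bar{a}\, Q_b$, but these amount to bookkeeping rather than analytic obstructions, as already handled in the complex-analytic case in \cite[Section 3]{LY}.

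First I would compute $(\partial_{\hat{t}} + L_{b,m,t})\Gamma_m(\hat{t}, t, x, y)$. By Lemma \ref{l-gue150606}, which conjugates $L_{b,m,t}$ acting on $m$-th Fourier components into $L_{B_j,m,t}$ on a BRT chart after stripping the factors $e^{\pm im\theta}$ and $e^{\pm m\varphi_j}$, together with the parametrix equation $A'_{B_j,m}(\hat{t}) + A_{B_j,m}(\hat{t})\,L_{B_j,m,t} = 0$ from Theorem \ref{t-gue150607}, this reduces to a sum of commutator terms in which $L_{b,m,t}$ hits one of the cutoff functions $\chi_j(x)$, $\tau_j(w)$, or $\sigma_j(\eta)$. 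Each such commutator is supported where $x$ and $y$ are uniformly separated in the BRT chart, so the Gaussian off-diagonal bound \eqref{e-gue160128w} together with the asymptotic expansion \eqref{E:asymptotic} yields an estimate of the form $C\,t^{-P} e^{-\varepsilon_1/t}$, which I can absorb into a cleaner bound $e^{-\varepsilon_0/t}$ by slightly shrinking the exponent.

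Second, I would verify the initial condition. Since $A_{B_j,m}(\hat{t}) \to I$ in $D'$ as $\hat{t} \to 0^+$, the local kernels $H_{j,m}(\hat{t}, t, x, y)$ converge to $\chi_j(x)\tau_j(w)\sigma_j(\eta)\,\delta(x-y)$, the exponential weights $e^{-m\varphi_j + im\theta}$ and $e^{m\varphi_j - im\eta}$ collapsing on the diagonal. Summing over $j$ using $\sum_j \chi_j \equiv 1$ and $\tau_j \equiv 1$ on $\mathrm{supp}\,\chi_j$, and performing the $S^1$-averaging $\frac{1}{2\pi}\int_{-\pi}^\pi e^{imu}(\,\cdot\,)\, du$ appearing in \eqref{e-gue150626fIII}, one recovers precisely the Schwartz kernel of the orthogonal projection onto $L^2_m(X, T^{*0,\bullet}X \otimes E)\widehat{\otimes}\mathbb{C}(da, d\bar{a})$; this is the correct initial datum for $e^{-L_{b,m,t}}$ viewed as a flow on the $m$-th Fourier component.

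With these two ingredients in hand, Duhamel's formula gives
\[
e^{-L_{b,m,t}}(x,y) - \Gamma_m(1, t, x, y) = -\int_0^1 e^{-(1-\hat{t})L_{b,m,t}}\bigl((\partial_{\hat{t}} + L_{b,m,t})\Gamma_m(\hat{t}, t, \cdot, y)\bigr)(x)\, d\hat{t},
\]
and combining the $L^2 \to C^\ell$ mapping property of $e^{-sL_{b,m,t}}$, obtained from standard elliptic estimates for $\Box_{b,m}$ on the $m$-th Fourier component (cf.\ \cite{CHT}), with the $O(e^{-\varepsilon_0/t})$ bound on the integrand yields the claim. The main obstacle I anticipate is ensuring that the polynomial losses in $t^{-1}$ arising from differentiating $A_{B_j,m}$ up to order $\ell$ in $x, y$, and from the operator $e^{-(1-\hat{t})L_{b,m,t}}$ acting in the Duhamel integral, remain uniformly dominated by the Gaussian factor from \eqref{e-gue160128w}; this requires a careful bookkeeping of the supports of the cutoff derivatives together with the positive lower bound on $h(z,w)$ off the diagonal, and is exactly the point where the argument of \cite[Theorem 5.11]{CHT} needs to be reproduced with only notational changes coming from the Grassmann factors.
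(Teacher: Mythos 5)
Your proposal is correct and follows essentially the same route as the paper, whose proof consists precisely of invoking Lemma \ref{l-gue150606}, the parametrix properties and off-diagonal estimates \eqref{e-gue160128w}, \eqref{E:asymptotic} of Theorem \ref{t-gue150607}, and then repeating the proof of Theorem 5.11 in \cite{CHT} (the patching-plus-Duhamel argument you spell out, with the Grassmann variables handled as in \cite{LY}). The only adjustment needed is directional: Theorem \ref{t-gue150607} states the approximate heat equation with $L_{B,m,t}$ acting through the second variable ($A'_{B,m}(\hat t)u+A_{B,m}(\hat t)(L_{B,m,t}u)=0$), so the Duhamel identity should be written in the transposed form, with $(\partial_{\hat t}\Gamma_m(\hat t)+\Gamma_m(\hat t)L_{b,m,t})$ composed with $e^{-(1-\hat t)L_{b,m,t}}$ on the right, after which your estimates go through unchanged.
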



\subsection{Constant term of the heat kernel asymptotics of the modified Kohn Laplacians on BRT trivializations}\label{SS:const}

To state the result precisely, we introduce some notations. Let $\nabla^{TU}$ and $\nabla'^{TU}$be the Levi-Civita connections on $\mathbb{C}TU$ with respect to the metrics $\langle \cdot, \cdot \rangle$ and $\langle \cdot, \cdot \rangle'$, repsectively. Let $P_{T^{1,0}U}$ be the natural projection from $\mathbb{C}TU$ onto $T^{1,0}U.$ Then, 
$$
\nabla^{T^{1,0}U}:=P_{T^{1,0}U}\nabla^{TU} \quad \text{and} \quad \nabla'^{T^{1,0}U}:=P_{T^{1,0}U}\nabla'^{TU}
$$ 
are connections on $T^{1,0}U$. Let $\nabla^{E\otimes L^m}$ be the Chern connection on $E \otimes L^m \to U$ induced by the metrics $\langle \cdot, \cdot \rangle_E$ and $h^{L^m}$ and $\nabla'^{E\otimes L^m}$ be the Chern connection on $E \otimes L^m \to U$ induced by the metrics $\langle \cdot, \cdot \rangle'_E$ and $h^{L^m}$. Let 
$$
 \Theta^{T^{1,0}U}  \, := \,  \Theta(\nabla^{T^{1,0}U}, T^{1,0}U) \in C^\infty(U, \wedge^2( \mathbb{C}T^*U) \otimes \operatorname{End}(T^{1,0}U))
 $$
 and 
 $$
 \Theta^{E \otimes L^m}  \, := \, \Theta(\nabla^{E\otimes L^m}, E\otimes L^m) \in C^\infty(U, \wedge^2( \mathbb{C}T^*U) \otimes \operatorname{End}(E \otimes L^m))
 $$ 
 be the curvatures induced by $\nabla^{T^{1,0}U}$ and $\nabla^{E \otimes L^m}$, respectively. Similarly, let 
 $$
 \Theta'^{T^{1,0}U}  \in C^\infty(U, \wedge^2( \mathbb{C}T^*U) \otimes \operatorname{End}(T^{1,0}U))
 $$ 
 and 
 $$
 \Theta'^{E \otimes L^m} \in C^\infty(U, \wedge^2( \mathbb{C}T^*U) \otimes \operatorname{End}(E \otimes L^m))
 $$ 
 be the curvatures induced by $\nabla'^{T^{1,0}U}$ and $\nabla'^{E \otimes L^m}$, respectively. As in complex geometry, put
$$
\operatorname{Td} \left(  \Theta^{T^{1,0}U}   \right) \, := \,\operatorname{Td} \left( \nabla^{T^{1,0}U}, T^{1,0}U \right)  \, = \, e^{\operatorname{Tr}\left( h \left(  \frac{i}{2\pi} \Theta \left( \nabla^{T^{1,0}U}, T^{1,0}U \right) \right)  \right) },
$$
where 
$$
h \left( z \right) \, = \, \log \left( \frac{z}{1-e^{-z}} \right),
$$
and
$$
\operatorname{ch}\left( \Theta^{E \otimes L^m} \right) \, := \, \operatorname{ch} \left( \nabla^{E \otimes L^m}, E \otimes L^m \right) \, = \, \operatorname{Tr} \left( \widetilde{h} \left( \frac{i}{2\pi} \Theta \left( \nabla^{E\otimes L^m}, E\otimes L^m \right)  \right) \right),
$$
where 
$$
\widetilde{h} \left( z \right) \, = \, e^z.
$$
We also define $\operatorname{Td} \left(  \Theta'^{T^{1,0}U}   \right)$ and $\operatorname{ch}\left( \Theta'^{E \otimes L^m} \right)$ in similar ways.

Let $g^{U}, g'^{U}$ be two Hermitian metrics on $T^{1,0}U$ and $h^{E}, h'^{E}$ be two Hermitian metrics on $E$. Consider a smooth family of metrics $s \in [0,1] \to (g^U_s, h^E_s)$ on $T^{1,0}U$ and $E$ such that
\[
(g_0^U, h_0^{E}) \, = \, (g^U, h^{E}) \quad \text{and} \quad (g_1^U, h_1^{E}) \, = \, (g'^U, h'^{E}).
\]
Let $\nabla_s^{T^{1,0}U}$ and $\nabla_s^{E \otimes L^m}$ be the connections on $T^{1,0}U$ and on $E \otimes L^m \to U$ induced metrics $g^{U}_s$ and $(h^E_s, h^{L^m})$, respectively,
such that 
\[
(\nabla_0^{T^{1,0}U}, \nabla_1^{T^{1,0}U}) \, = \, (\nabla^{T^{1,0}U}, \nabla'^{T^{1,0}U})
\]
 and 
 \[ 
 (\nabla_0^{E \otimes L^m}, \nabla_1^{E \otimes L^m}) \, =\, (\nabla^{E \otimes L^m}, \nabla'^{E \otimes L^m}).
\]
Let 
$$
 \Theta^{T^{1,0}U}_s  \, := \,  \Theta(\nabla_s^{T^{1,0}U}, T^{1,0}U) \in C^\infty(U, \wedge^2( \mathbb{C}T^*U) \otimes \operatorname{End}(T^{1,0}U))
 $$ 
 and 
 $$
  \Theta_s^{E \otimes L^m}  \, := \, \Theta (\nabla_s^{E\otimes L^m}, E\otimes L^m) \in C^\infty(U, \wedge^2( \mathbb{C}T^*U) \otimes \operatorname{End}(E \otimes L^m))
 $$ 
 be the curvatures induced by $\nabla_s^{T^{1,0}U}$ and $\nabla_s^{E \otimes L^m}$, respectively, such that 
 $$
 (\Theta_0^{T^{1,0}U}, \Theta_1^{T^{1,0}U}) \, = \,  (\Theta^{T^{1,0}U}, \Theta'^{T^{1,0}U}) 
 $$
 and 
 $$ 
  (\Theta_0^{E \otimes L^m}, \Theta_1^{E \otimes L^m}) \, = \,  (\Theta^{E \otimes L^m}, \Theta'^{E \otimes L^m}).
 $$ 
Let 
$$
P^U \, = \, \oplus^n_{p=0}\Omega^{p,p}(U).
$$ 
Let $P'^U \subset P^U$ be the set of smooth forms $\alpha \in P^U$ such that there exist smooth forms $\beta, \gamma$ on $X$ for which 
$$
\alpha \, = \, \partial \beta + \overline{\partial} \gamma.
$$ 
By the results of \cite[\S (e)]{BGS1}, the form
\begin{eqnarray}
\alpha & := & (2\pi \sqrt{-1})^{-n} \int_0^1 \frac{\partial}{\partial v}\Big|_{v=0} \Big\{  \operatorname{Td}\Big( -\Theta^{T^{1,0}U}_s - v ( g_s^{U})^{-1} \frac{\partial g^U_s}{\partial s} \Big)  \nonumber  \\
& & \wedge  \operatorname{Tr} \left( \exp \left( -\Theta^{E \otimes L^m}_s - v  ( h_s^{E \otimes L^m} )^{-1} \frac{\partial h_s^{E \otimes L^m}}{\partial s}\right) \right) \, \Big\}dv \nonumber
\end{eqnarray} 
defines an element in $P^U/P'^U$ which depends only on $(g^U, h^{E \otimes L^m})$ and $ (g'^U, h'^{E \otimes L^m})$. 
Since the Hermitian metric $h^L$ does not depend on the parameter $s$, we can easily see that
\begin{eqnarray}\label{E:formalpha}
\alpha & = & (2\pi \sqrt{-1})^{-n} \int_0^1 \frac{\partial}{\partial v}\Big|_{v=0} \Big\{  \operatorname{Td}\Big( -\Theta^{T^{1,0}U}_s - v ( g_s^{U})^{-1} \frac{\partial g^U_s}{\partial s} \Big)  \nonumber  \\
& & \wedge  \operatorname{Tr} \left( \exp \left( -\Theta^{E}_s - v  ( h_s^{E} )^{-1} \frac{\partial h_s^{E}}{\partial s}\right) \right) \, \Big\}dv  \wedge \operatorname{Tr} \left( \exp \left( -\Theta^{L^m} \right) \right) 
\end{eqnarray} 
Recall that, cf. \cite[(1.103), (1.136), (1.138)]{BGS3},
\begin{equation}\label{E:strabm}
\lim_{t \to 0} \operatorname{STr} A_{B,m}(1,t,z,w) ^{dad\bar{a}} \, = \, \alpha \, = \, \operatorname{STr} a_0(z,z). \nonumber
\end{equation}

By \cite[\S (f)]{BGS1}, there is uniquely defined secondary characteristic (Bott-Chern) classes 
$$
\widetilde{\operatorname{Td}}(\nabla^{T^{1,0}U}, \nabla'^{T^{1,0}U}, T^{1,0}U)
$$ 
and 
$$
\widetilde{\operatorname{ch}}(\nabla^{E \otimes L^m}, \nabla'^{E \otimes L^m}, E \otimes L^m)
$$ 
in $P^U/P'^U$ such that
\begin{eqnarray}\label{E:bcchern}
\frac{\overline{\partial}\partial}{2\pi\sqrt{-1}} \widetilde{\operatorname{Td}}(\nabla^{T^{1,0}U}, \nabla'^{T^{1,0}U}, T^{1,0}U) \, = \, \operatorname{Td}(\nabla'^{T^{1,0}U}, T^{1,0}U) - \operatorname{Td}(\nabla^{T^{1,0}U}, T^{1,0}U), \nonumber \\
\frac{\overline{\partial}\partial}{2\pi\sqrt{-1}} \widetilde{\operatorname{ch}}(\nabla^{E \otimes L^m}, \nabla'^{E \otimes L^m}, E \otimes L^m) \, = \, \operatorname{ch}(\nabla'^{E \otimes L^m}, E \otimes L^m) - \operatorname{ch}(\nabla^{E \otimes L^m}, E \otimes L^m).
\end{eqnarray}
Hence, we have 
\begin{equation}\label{E:stra00}
\operatorname{STr} a_0(z,z)dv_U(z) \, = \, [\alpha^{n,n}](z), \quad \forall z \in U. \nonumber
\end{equation}

According to \cite[Theorem 1.27, 1.29 and Corollary 1.30]{BGS1}, the component of degree $(n,n)$ of $\alpha$ represents in $P^U/P'^U$ the corresponding component of the Bott-Chern class
\begin{equation}\label{E:alphann}
\alpha^{n,n} \, := \, \widetilde{\operatorname{Td}}(\nabla^{T^{1,0}U}, \nabla'^{T^{1,0}U}, T^{1,0}U)\wedge \operatorname{ch}(\nabla^{E \otimes L^m}) + \operatorname{Td}(\nabla^{T^{1,0}U}) \wedge \widetilde{\operatorname{ch}}(\nabla^E\otimes L^m, \nabla'^{E \otimes L^m}, E \otimes L^m). 
\end{equation} 
 
By \eqref{E:formalpha}, \eqref{E:bcchern} and \eqref{E:alphann}, we have
\begin{equation}\label{E:alphann1}
\alpha^{n,n} \, = \, \left( \widetilde{\operatorname{Td}}(\nabla^{T^{1,0}U}, \nabla'^{T^{1,0}U}, T^{1,0}U)\wedge \operatorname{ch}(\nabla^{E}) + \operatorname{Td}(\nabla^{T^{1,0}U}) \wedge \widetilde{\operatorname{ch}}(\nabla, \nabla'^{E}, E) \right)\wedge \operatorname{ch}(\nabla^{L^m}, L^m). \nonumber
\end{equation}

Let $\nabla^{TX}$ and $\nabla'^{TX}$ be the Levit-Civita connections on $TX$ with respect to $\langle \, \cdot\,, \, \cdot \, \rangle$ and $\langle \, \cdot \, , \, \cdot \, \rangle'$, respectively. Let $P_{T^{1,0}X}$ be the natural projection from $\mathbb{C}TX$ onto $T^{1,0}X$. Then, 
$$
\nabla^{T^{1,0}X}\, := \, P_{T^{1,0}X}\nabla^{TX}
$$
and 
$$
 \nabla'^{T^{1,0}X}\, := \, P_{T^{1,0}X}\nabla'^{TX}
$$
are connections on $T^{1,0}X$. Let $\nabla^E$ and $\nabla'^E$ be the connections on $E$ induced by $\langle \, \cdot \, | \, \cdot \, \rangle_E$ and $\langle \, \cdot \, | \, \cdot \, \rangle'_E$, respectively. We can check that $\nabla^{T^{1,0}X}, \nabla'^{T^{1,0}X}, \nabla'^E$ and $\nabla'^E$ are rigid. Moreover, it is straightforward to check that
\begin{eqnarray}
\operatorname{Td} \left( \nabla^{T^{1,0}U}, T^{1,0}U \right) (z)  \, = \, \operatorname{Td}_b \left( \nabla^{T^{1,0}X}, T^{1,0}X \right) (z, \theta), \quad \forall (z, \theta) \, \in \, D, \nonumber \\
\operatorname{ch} \left( \nabla^{E \otimes L^m}, E\otimes L^m \right)(z) \, = \, \left( \operatorname{ch}_b (\nabla^E, E) \wedge e^{-m\frac{d\omega_0}{2\pi}}  \right)(z, \theta), \quad \forall (z, \theta) \, \in \, D. \nonumber 
\end{eqnarray}

We can check that
\begin{eqnarray}
\widetilde{\operatorname{Td}} \left( \nabla^{T^{1,0}U}, \nabla'^{T^{1,0}U}, T^{1,0}U \right) (z)  \, = \, \widetilde{\operatorname{Td}}_b \left( \nabla^{T^{1,0}X}, \nabla'^{T^{1,0}X}, T^{1,0}X \right) (z, \theta), \quad \forall (z, \theta) \, \in \, D, \nonumber \\
\widetilde{\operatorname{ch}} \left( \nabla^{E \otimes L^m}, \nabla'^{E \otimes L^m}, E\otimes L^m \right)(z) \, = \, \left( \widetilde{\operatorname{ch}}_b (\nabla^{E}, \nabla'^{E}, E) \wedge e^{-m\frac{d\omega_0}{2\pi}} \right)(z, \theta), \quad \forall (z, \theta) \, \in \, D.\nonumber 
\end{eqnarray}
and
\begin{eqnarray}\label{E:formequal}
&& \left[ \widetilde{\operatorname{Td}} \left( \nabla^{T^{1,0}U}, \nabla'^{T^{1,0}U}, T^{1,0}U \right) \wedge \operatorname{ch} \left( \nabla^{E \otimes L^m}, E\otimes L^m \right) \right]_{2n}(z) \wedge d\theta \nonumber \\
&& = \left[ \widetilde{\operatorname{Td}}_b \left( \nabla^{T^{1,0}X}, \nabla'^{T^{1,0}X}, T^{1,0}X \right) \wedge \operatorname{ch}_b \left( \nabla^{E}, E \right) \wedge e^{-m\frac{d\omega_0}{2\pi}} \wedge \omega_0 \right]_{2n+1}(z, \theta), \quad \forall (z, \theta) \in D, \nonumber \\
&& \left[ \operatorname{Td} \left( \nabla^{T^{1,0}U}, T^{1,0}U \right) \wedge \widetilde{\operatorname{ch}} \left( \nabla^{E \otimes L^m},  \nabla'^{E \otimes L^m}, E\otimes L^m \right) \right]_{2n}(z) \wedge d\theta \nonumber \\
 && = \left[ \operatorname{Td}_b \left( \nabla^{T^{1,0}X}, T^{1,0}X \right) \wedge \widetilde{\operatorname{ch}}_b \left( \nabla^{E}, \nabla'^{E}, E \right) \wedge e^{-m\frac{d\omega_0}{2\pi}} \wedge \omega_0 \right]_{2n+1}(z, \theta), \quad \forall (z, \theta) \in D, \nonumber 
\end{eqnarray}
where 
$\left[ \widetilde{\operatorname{Td}}_b \left( \nabla^{T^{1,0}X}, \nabla'^{T^{1,0}X}, T^{1,0}X \right) \wedge \operatorname{ch}_b \left( \nabla^{E}, E \right) \wedge e^{-m\frac{d\omega_0}{2\pi}} \wedge \omega_0 \right]_{2n+1}$
denotes the $2n+1$ forms part of 
\[
\widetilde{\operatorname{Td}}_b \left( \nabla^{T^{1,0}X}, \nabla'^{T^{1,0}X}, T^{1,0}X \right) \wedge \operatorname{ch}_b \left( \nabla^{E}, E \right) \wedge e^{-m\frac{d\omega_0}{2\pi}} \wedge \omega_0 
\]
and $ \left[ \operatorname{Td}_b \left( \nabla^{T^{1,0}X}, T^{1,0}X \right) \wedge \widetilde{\operatorname{ch}}_b \left( \nabla^{E}, \nabla'^{E}, E \right) \wedge e^{-m \frac{d\omega_0}{2\pi}} \wedge \omega_0 \right]_{2n+1}$ denotes the $2n+1$ parts of
\[
\operatorname{Td}_b \left( \nabla^{T^{1,0}X}, T^{1,0}X \right) \wedge \widetilde{\operatorname{ch}}_b \left( \nabla^{E}, \nabla'^{E}, E \right) \wedge e^{-m \frac{d\omega_0}{2\pi}}\wedge \omega_0.
\]
From the above equations and note that 
$$
dv_U \wedge d\theta = dv_X
$$ 
on $D$, we get
\begin{theorem}\label{T:a0zz}
With the notations above, we have, for all $(z, \theta) \in D$,
\begin{eqnarray}
\operatorname{STr} a_0(z,z) dv_X(z,\theta) & = & \left[ \widetilde{\operatorname{Td}}_b \left( \nabla^{T^{1,0}X}, \nabla'^{T^{1,0}X}, T^{1,0}X \right) \wedge \operatorname{ch}_b \left( \nabla^{E}, E \right) \wedge e^{-m\frac{d\omega_0}{2\pi}} \wedge \omega_0 \right]_{2n+1}(z, \theta) \nonumber \\
 & & +  \left[ \operatorname{Td}_b \left( \nabla^{T^{1,0}X}, T^{1,0}X \right) \wedge \widetilde{\operatorname{ch}}_b \left( \nabla^{E}, \nabla'^{E}, E \right) \wedge e^{-m\frac{d\omega_0}{2\pi}} \wedge \omega_0 \right]_{2n+1}(z, \theta). \nonumber
\end{eqnarray}
\end{theorem}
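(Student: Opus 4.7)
The plan is to reduce the statement to a pointwise identity on a single BRT chart, invoke the classical Bismut-Gillet-Soul\'e constant-term formula in complex Hermitian geometry on the base $U$, and then translate the resulting $(n,n)$-form identity on $U$ into an identity of $(2n+1)$-forms on $X$ via the dictionary already assembled just above the statement.

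First I would fix $(z_0,\theta_0)\in X$ and choose a BRT trivialization $B=(D,(z,\theta),\varphi)$ containing it, with $D=U\times {]-\varepsilon,\varepsilon[}$. By Lemma~\ref{l-gue150606} the conjugation by $e^{m\varphi-im\theta}$ identifies $L_{b,m,t}$ with the modified operator $L_{B,m,t}$ acting on $\Omega^{0,\bullet}(U,E\otimes L^m)$; consequently the constant-in-$t$ term $a_0(z,w)$ appearing in the asymptotic expansion of Theorem~\ref{t-gue150607} is precisely the constant-in-$t$ term in the supertrace asymptotics of the Hermitian Bismut-Gillet-Soul\'e heat operator for the modified Kodaira Laplacian on $U$ coupled to $E\otimes L^m$. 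The standard complex-geometric result (\cite[Theorem~1.18]{BGS1}, cf.\ also \cite[Theorem~5.5.6]{MM}) then yields $\operatorname{STr} a_0(z,z)\,dv_U(z) = [\alpha^{n,n}](z)$, and using \eqref{E:alphann1} this is
\[
\bigl(\widetilde{\operatorname{Td}}(\nabla^{T^{1,0}U},\nabla'^{T^{1,0}U},T^{1,0}U)\wedge \operatorname{ch}(\nabla^{E}) + \operatorname{Td}(\nabla^{T^{1,0}U})\wedge\widetilde{\operatorname{ch}}(\nabla^{E},\nabla'^{E},E)\bigr)\wedge \operatorname{ch}(\nabla^{L^m},L^m).
\]

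Second I would substitute the chart-by-chart identifications recorded immediately before the theorem, namely
\[
\operatorname{Td}(\nabla^{T^{1,0}U},T^{1,0}U)(z) = \operatorname{Td}_b(\nabla^{T^{1,0}X},T^{1,0}X)(z,\theta),
\]
the analogous formulas for $\operatorname{ch}_b$, $\widetilde{\operatorname{Td}}_b$, $\widetilde{\operatorname{ch}}_b$, and the crucial identification $\operatorname{ch}(\nabla^{L^m},L^m)(z) = e^{-m\,d\omega_0/(2\pi)}(z,\theta)$, which comes from computing the Chern curvature of the rigid weight $e^{-2\varphi}$ and matching it with the Levi curvature via \eqref{E:leviform2}. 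This converts $[\alpha^{n,n}](z)$ into the pull-back of the horizontal $2n$-part of the tangential sum appearing in the right-hand side of the theorem. Finally I would wedge by $d\theta$ and use $dv_U\wedge d\theta=dv_X$ on $D$: since in BRT coordinates $\omega_0$ differs from $-d\theta$ only by horizontal one-forms and all the tangential characteristic factors already live in the horizontal subalgebra, wedging by $\omega_0$ picks out the same top-degree component as wedging by $-d\theta$, so the $[\,\cdot\,]_{2n+1}$-parts on the right-hand side of the theorem are recovered. Arbitrariness of the chart then yields the claim on all of $X$.

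The only non-formal obstacle is the bookkeeping in the second step: one must verify chart-by-chart that the complex Levi-Civita-type forms on $U$ agree, under the BRT identifications, with the tangential forms built from $\nabla^{TX}$, and that the twist by $L^m$ is encoded by exactly $e^{-m\,d\omega_0/(2\pi)}$. Both are pointwise computations in BRT coordinates using the formula $d\omega_0 = 2i\,\partial\overline{\partial}\varphi$ on the horizontal directions together with the definition of the Chern curvature of $h^L=e^{-2\varphi}$, so no new analytic ingredient is required beyond Theorem~\ref{t-gue150607} and the cited Hermitian constant-term formula.
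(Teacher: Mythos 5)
Your proposal is correct and follows essentially the same route as the paper: localize to a BRT chart via Lemma~\ref{l-gue150606}, identify $\operatorname{STr}a_0(z,z)\,dv_U$ with the Bott--Chern form $[\alpha^{n,n}]$ through the Bismut--Gillet--Soul\'e constant-term results on $U$ coupled to $E\otimes L^m$, and then pass to the tangential classes and $e^{-m\,d\omega_0/(2\pi)}$ using $dv_U\wedge d\theta=dv_X$ on $D$, exactly as in Subsection~\ref{SS:const}. The only nitpick is the sign bookkeeping in your final remark: with the paper's normalization $\langle\,\omega_0\,,\,T\,\rangle=-1$ one gets $d\omega_0=-2i\,\partial_b\overline{\partial}_b\varphi$ on the horizontal directions in BRT coordinates, which is what makes the Chern curvature of $h^{L^m}=e^{-2m\varphi}$ match $e^{-m\,d\omega_0/(2\pi)}$ with the stated sign.
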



\subsection{Proof of Theorem \ref{T:main}}\label{SS:proof}

The theorem follows by combining Theorem \ref{T:5.5.6}, Proposition \ref{P:strqbm}, Lemma \ref{l-gue150606}, Theorem \ref{t-gue150607}, \eqref{e-gue150627f}, \eqref{e-gue150626fIII}, Theorem \ref{t-gue150630I}, \eqref{E:strabm} and Theorem \ref{T:a0zz}.


\section{The asymptotic anomaly formula of the analytic torsion}

In this section we will deduce an asymptotic anomaly formula for the $L^2$-metric on $\lambda_{b,m}(E)$. The formula is an CR analogue of Theorem 5.5.12 of \cite{MM}. 

\subsection{Asymptotic anomaly formula for the $L^2$-metric}

We now define the canonical line bundle $K_X$ of $(X, T^{1,0}X)$ by
\[
K_X \, = \, \wedge^{n}T^{*1,0}X.
\]
We denote by $K_X^*$ the dual of the canonical line bundle $K_X$ on $X$. Let $\langle\, \cdot \, | \, \cdot \, \rangle_0$ and $\langle \, \cdot \, | \, \cdot \, \rangle_1$ be two rigid Hermitian metrics on $\mathbb{C}TX$. We keep the rigid Hermtitian metric $h^E$ on $E$ fixed. Let $|\cdot|_{K_X^*,0}$ and $|\cdot|_{K_X^*,1}$ be the metrics on $K_X^*$ induced by the metrics $\langle\, \cdot \, | \, \cdot \, \rangle_0$ and $\langle \, \cdot \, | \, \cdot \, \rangle_1$, respectively. Let $|| \cdot ||_{m,0}$ and $||\cdot||_{m,1}$ be the Quillen metrics on $\lambda_{b,m}(E)$ induced by the metrics $\langle\, \cdot \, | \, \cdot \, \rangle_0$ and $\langle \, \cdot \, | \, \cdot \, \rangle_1$, respectively, and the given rigid Hermitian metric $h^E$ on $E$. Let $|\cdot|_{m,0}$ and $|\cdot|_{m,1}$ be the $L^2$ metrics on $\lambda_{b,m}(E)$ induced by the metrics $\langle\, \cdot \, | \, \cdot \, \rangle_0$ and $\langle \, \cdot \, | \, \cdot \, \rangle_1$, respectively, and the given rigid Hermitian metric $h^E$ on $E$.
\begin{theorem}\label{T:5.5.12}
As $m \to \infty$, we have
\begin{equation}\label{E:5.5.61}
\log \left( \frac{|\, \cdot \, |^2_{b,m,1}}{|\, \cdot\, |^2_{b,m,0}} \right) \, = \, - \operatorname{rk}(E) \int_X \log \left( \frac{|\, \cdot\, |^2_{K_X^*,1}}{|\, \cdot\, |^2_{K_X^*,0}} \right) e^{-m \frac{d\omega_0}{2\pi} } \wedge \omega_0 +o(m^n).
\end{equation}
\end{theorem}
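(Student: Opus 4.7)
The plan is to derive the asymptotic $L^2$--metric formula \eqref{E:5.5.61} from the exact Quillen anomaly formula of Theorem~\ref{T:main} by isolating the leading $m^n$--contribution of the resulting integral, and showing that the correction coming from the analytic torsion is $o(m^n)$. Since the Hermitian metric $h^E$ on $E$ is kept fixed, the two induced connections on $E$ coincide, so $\widetilde{\operatorname{ch}}_b(\nabla^E,\nabla^E,E)$ vanishes. Applying Theorem~\ref{T:main} to a smooth family interpolating $\langle\,\cdot\,|\,\cdot\,\rangle_0$ and $\langle\,\cdot\,|\,\cdot\,\rangle_1$, and passing to squared norms, yields
\[
\log\frac{\|\,\cdot\,\|^2_{m,1}}{\|\,\cdot\,\|^2_{m,0}}\,=\,\frac{1}{\pi}\int_X \widetilde{\operatorname{Td}}_b(\nabla_0^{T^{1,0}X},\nabla_1^{T^{1,0}X},T^{1,0}X)\wedge\operatorname{ch}_b(\nabla^E,E)\wedge e^{-m\frac{d\omega_0}{2\pi}}\wedge\omega_0,
\]
and the defining relation $\|\,\cdot\,\|^2_{m,i}=|\,\cdot\,|^2_{m,i}\exp(-\theta_{b,m,i}'(0))$ converts this to
\[
\log\frac{|\,\cdot\,|^2_{m,1}}{|\,\cdot\,|^2_{m,0}}\,=\,\log\frac{\|\,\cdot\,\|^2_{m,1}}{\|\,\cdot\,\|^2_{m,0}}+\bigl(\theta_{b,m,1}'(0)-\theta_{b,m,0}'(0)\bigr).
\]

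Next I extract the leading $m^n$--term of the integral by a degree count. Since $X$ has real dimension $2n+1$ and $\omega_0$ contributes degree $1$, only the $2n$--form component of $e^{-m d\omega_0/(2\pi)}\wedge\widetilde{\operatorname{Td}}_b\wedge\operatorname{ch}_b$ survives, and the maximal power $m^n$ is attained only when $e^{-m d\omega_0/(2\pi)}$ supplies all $2n$ degrees; each positive degree of $\widetilde{\operatorname{Td}}_b\wedge\operatorname{ch}_b$ replaces a factor of $d\omega_0$ and drops the $m$--power by one. The degree--$0$ part of $\operatorname{ch}_b(\nabla^E,E)$ is $\operatorname{rk}(E)$. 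From $\operatorname{Td}_b(\nabla,T^{1,0}X)=1+\tfrac12 c_1(\nabla^{T^{1,0}X})+\cdots$ combined with $c_1(T^{1,0}X)=c_1(K_X^*)$, the defining equation
\[
\frac{\overline{\partial}_b\partial_b}{2\pi\sqrt{-1}}\widetilde{\operatorname{Td}}_b(\nabla_0^{T^{1,0}X},\nabla_1^{T^{1,0}X},T^{1,0}X)\,=\,\operatorname{Td}_b(\nabla_1^{T^{1,0}X},T^{1,0}X)-\operatorname{Td}_b(\nabla_0^{T^{1,0}X},T^{1,0}X)
\]
of Subsection~\ref{S:ts} identifies the degree--$0$ part of $\widetilde{\operatorname{Td}}_b(\nabla_0^{T^{1,0}X},\nabla_1^{T^{1,0}X},T^{1,0}X)$ with a constant multiple of $-\tfrac12\log(|\,\cdot\,|^2_{K_X^*,1}/|\,\cdot\,|^2_{K_X^*,0})$. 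After accounting for the $\tfrac{1}{2\pi}$ normalization of Theorem~\ref{T:main}, this produces exactly the right--hand side of \eqref{E:5.5.61}, while the lower--degree components of $e^{-m d\omega_0/(2\pi)}$ contribute only $O(m^{n-1})$.

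To control the analytic torsion difference I would invoke the asymptotic expansion of the $m$--th Fourier component $\theta_{b,m}'(0)$ established in~\cite{HH}, which is the CR analogue of the Bismut--Vasserot formula~\cite{BV}. The leading $m^{n+1}\log m$--coefficient there is an integral depending only on the Levi curvature $d\omega_0$ and on $\operatorname{rk}(E)$, data that are identical for the two metrics $\langle\,\cdot\,|\,\cdot\,\rangle_0$ and $\langle\,\cdot\,|\,\cdot\,\rangle_1$. A uniform refinement of the same argument, tracking the expansion of $\theta_{b,m,s}'(0)$ up through order $m^n$ smoothly in $s\in[0,1]$, shows that the $m^{n+1}$, $m^n\log m$, and $m^n$ coefficients are also metric--independent, so that $\theta_{b,m,1}'(0)-\theta_{b,m,0}'(0)=o(m^n)$. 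Combined with the preceding paragraph this proves \eqref{E:5.5.61}.

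The principal difficulty is precisely this last step: extracting a metric--uniform asymptotic expansion of $\theta_{b,m}'(0)$ through order $m^n$ and verifying that every coefficient in this range depends only on $d\omega_0$ and $\operatorname{rk}(E)$. This is the CR counterpart of the Bismut--Vasserot $L^2$--metric asymptotic and is where essentially all of the analytic work lies; the remaining ingredients are formal bookkeeping based on Theorem~\ref{T:main} and the degree structure of the tangential Bott--Chern classes introduced in Subsection~\ref{S:ts}.
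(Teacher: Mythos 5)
Your reduction of the Quillen--metric part to the anomaly formula is essentially the paper's own strategy: the proof of Theorem~\ref{T:5.5.12} uses Theorem~\ref{T:5.5.6} together with \cite[Theorem 3.6]{HH} and the computation of \cite[p.~261]{MM} to obtain \eqref{E:5.5.69}, whose leading term is $-\tfrac{\operatorname{rk}(E)}{2}\int_X \log\bigl(|\cdot|^2_{K_X^*,1}/|\cdot|^2_{K_X^*,0}\bigr)e^{-m\frac{d\omega_0}{2\pi}}\wedge\omega_0$; this is consistent with your degree count, since the degree-zero component of $\widetilde{\operatorname{Td}}_b$ is $\tfrac12\log$ of the ratio of the induced metrics on $K_X^*$, so the Quillen anomaly carries the coefficient $-\tfrac{\operatorname{rk}(E)}{2}$, not $-\operatorname{rk}(E)$.

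The genuine gap is your treatment of the torsion term. You claim that the coefficients of $\theta'_{b,m}(0)$ through order $m^n$ depend only on $d\omega_0$ and $\operatorname{rk}(E)$, hence $\theta'_{b,m,1}(0)-\theta'_{b,m,0}(0)=o(m^n)$. This is false: while the $m^{n+1}$ and $m^n\log m$ terms are indeed metric-independent, the $m^n$ coefficient is not. This is precisely the content of \cite[Theorem 1.1]{HH} (the CR analogue of the Bismut--Vasserot anomaly \cite{BV}), which the paper invokes to get \eqref{E:5.5.62}: the torsion difference equals $-\tfrac{\operatorname{rk}(E)}{2}\int_X \log\bigl(|\cdot|^2_{K_X^*,1}/|\cdot|^2_{K_X^*,0}\bigr)e^{-m\frac{d\omega_0}{2\pi}}\wedge\omega_0+o(m^n)$, generically of order $m^n$. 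In the paper this term supplies exactly half of the final coefficient, and the Quillen anomaly \eqref{E:5.5.69} supplies the other half, combining via \eqref{E:5.5.70} to give the stated $-\operatorname{rk}(E)$. With your assumption, correct bookkeeping of the Bott--Chern degree-zero term would leave you with $-\tfrac{\operatorname{rk}(E)}{2}$, off by a factor of $2$; the only way your outline reaches $-\operatorname{rk}(E)$ is by leaving the ``constant multiple'' in the $\widetilde{\operatorname{Td}}_b$ step unspecified and adjusting it after the fact, which the paper's explicit computation rules out. Moreover, the analytic work you defer to the last step is aimed at proving a statement (metric-independence of the $m^n$ coefficient) that does not hold; the correct analytic input is the metric-dependent anomaly asymptotics of \cite[Theorem 1.1]{HH}, used directly as in \eqref{E:5.5.62}.
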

\begin{proof}
Let $\theta_{b,m,0}(z), \theta_{b,m,1}(z)$ be the $\zeta$-functions, defined as in \eqref{E:5.5.12}, associated with the rigid Hermitian metrics $\langle\, \cdot \, | \, \cdot \, \rangle_0$ and $\langle \, \cdot \, | \, \cdot \, \rangle_1$, respectively, and with the given rigid Hermitian metric $h^E$ on $E$. By Theorem 1.1 of \cite{HH} and 
$$
| \sigma |^2_{K_{X^*,i}} \, = \, |\Theta^n_i(\sigma, \overline{\sigma})|/n!, \quad i=0,1,
$$ 
we have
\begin{equation}\label{E:5.5.62}
\theta'_{b,m,1}(0) - \theta'_{b,m,0}(0) = - \frac{\operatorname{rk}(E)}{2} \int_X \log \left( \frac{|\cdot|^2_{K_X^*,1}}{|\cdot|^2_{K_X^*,0}} \right) e^{-m \frac{d\omega_0}{2\pi} } \wedge \omega_0 +o(m^n).
\end{equation}
We next choose a path of metrics $g_s\, :=\, \langle \, \cdot \, | \, \cdot \, \rangle_s, s \in [0, 1],$ connecting $\langle\, \cdot \, | \, \cdot \, \rangle_0$ and $\langle \, \cdot \, | \, \cdot \, \rangle_1$. We denote the objects associated to $\langle \, \cdot \, | \, \cdot \, \rangle_s$ with a subscript $s$. Then, by Theorem \ref{T:5.5.6} and Theorem 3.6 of \cite{HH}, we have
\begin{equation}\label{E:5.5.63}
\frac{\partial}{\partial s} \log \left( ||\cdot||^2_{\lambda_{b,m,s}(E)} \right) \, =  \, m^n \int_X \operatorname{STr} \Big[ Q_sA_{0,s} \otimes \operatorname{Id}_E \Big] dv_{X,s} + O(m^{n-1/2}),
\end{equation}
where $A_{0}$ is defined in (3.34) of \cite{HH}.
By proceeding as in \cite[P. 261]{MM}, we get (cf. \cite[(5.5.68)]{MM})
\begin{equation}\label{E:5.5.68}
\int_X \operatorname{STr} \Big[ Q_sA_{0,s} \Big] dv_{X,s} \, =  \, -\frac{1}{2} \int_X \operatorname{Tr}\big|_{T^{1,0}X} \Big[ \frac{\partial g_s}{\partial s} \Big] e^{-\frac{d\omega_0}{2\pi}} \wedge \omega_0
 \end{equation}
 By \eqref{E:5.5.63} and \eqref{E:5.5.68}, we have
\begin{equation}\label{E:5.5.69}
\log \left( \frac{||\cdot||^2_{b,m,1}}{||\cdot||^2_{b,m,0}} \right) = - \frac{\operatorname{rk}(E)}{2} \int_X \log \left( \frac{|\cdot|^2_{K_X^*,1}}{|\cdot|^2_{K_X^*,0}} \right) e^{-m \frac{d\omega_0}{2\pi} } \wedge \omega_0 +O(m^{n-1/2}).
\end{equation}
Finally, by the definition of the Quillen metric (see Definition \ref{D:5.5.5}), we have
\begin{equation}\label{E:5.5.70}
\log \left( \frac{||\cdot||^2_{b,m,1}}{||\cdot||^2_{b,m,0}} \right) = \theta'_{b,m,1}(0) - \theta'_{b,m,0} +  \log \left( \frac{|\cdot|^2_{b,m,1}}{|\cdot|^2_{b,m,0}} \right).
 \end{equation}
By \eqref{E:5.5.62}, \eqref{E:5.5.69} and \eqref{E:5.5.70}, we get \eqref{E:5.5.61}.
\end{proof}


\bibliographystyle{plain}

\begin{thebibliography}{99}

\bibitem{B} R. Berman, \emph{tunneling, the Quillen metric and analytic torsion for high powers of a holomorphic line bundle}, Comtemp. Math. \textbf{630}, (2014), 15-31.

\bibitem{BF} J.-M. Bismut and D. Freed, \emph{The analysis of elliptic families. II. Dirac operators, eta invariants, and the holonomy theorem}, Comm. Math. Phy. \textbf{107}, (1986), no. 1, 103-163. 

\bibitem{BGS1} J.-M. Bismut, H. Gillet, and C. Soul\'{e}, \emph{Analytic torsion and holomorphic determinant bundles. I. Bott-Chern forms and analytic torsion}, Comm. Math. Phys. \textbf{115}, (1988), no. 1, 49-78.

\bibitem{BGS2} J.-M. Bismut, H. Gillet, and C. Soul\'{e}, \emph{Analytic torsion and holomorphic determinant bundles. II. Direct images and Bott-Chern forms}, Comm. Math. Phys. \textbf{115}, (1988), no. 1, 79-126.

\bibitem{BGS3} J.-M. Bismut, H. Gillet, and C. Soul\'{e}, \emph{Analytic torsion and holomorphic determinant bundles. III. Quillen metrics on holomorphic determinants}, Comm. Math. Phys. \textbf{115}, (1988), no. 2, 301-305.

\bibitem{BRT85} M.-S.~Baouendi and L.-P.~Rothschild and F.-Treves, \emph{C{R} structures with group action and extendability of {C}{R} functions}, Invent. Math., \textbf{83} (1985), 359--396.

\bibitem{BV} J.-M. Bismut and E. Vasserot, \emph{The asymptotics of the Ray-Singer analytic torsion associated with high powers of a positive line bundle}, Comm. Math. Phys. \textbf{125}, (1989), no. 2, 355-367. 

\bibitem{CFM} A. Carey, M. Farber, and V. Mathai, \emph{Correspondences, von Neumann algebras and holomorphic $L^2$ torsion}, Canad. J. Math. \textbf{52} (2000), no. 4, 695-736

\bibitem{CHT} J.-H. Cheng, C.-Y. Hsiao and I.-H. Tsai, \emph{Index theorems on CR manifolds with $S^1$ actions}, ArXiv:1511.00063v1.

\bibitem{CM} S. E. Cappell and E. Y. Miller, \emph{Complex-valued analytic torsion for flat bundles and for holomorphic bundles with $(1,1)$ connections}, Comm. Pure Appl. Math. \textbf{63}, (2010), no. 2, 133-202.

\bibitem {Du82} J.-J.~Duistermaat and G.-J.~Heckman, \emph{On the Variation in the Cohomology of the Sympleetic Form of the Reduced Phase Space}, Invent. Math., \textbf{69}(1982), 259-268.

\bibitem{DT} S. Dragomir and G. Tomassini, \emph{Differential Geometry and Analysis on CR Manifolds}, Progress in Mathematics, \textbf{246}, Birkh\"{a}user Boston, Inc., Boston, MA, 2006.


\bibitem{F} S. Finski, \emph{On the full asymptotic of analytic torsion}, arXiv:1705.02779.

\bibitem{Hsiao14} C.-Y.~Hsiao, \emph{Szeg\"{o} kernel asymptotics for high power of CR line bundles and Kodaira embedding theorems on CR manifolds}, 116 pages, preprint available at arXiv:1401.6647, to appear in Memoirs of the American Mathematical Society.

\bibitem{HH} C.-Y. Hsiao and R.-T. Huang, \emph{The asymptotic of the analytic torsion on CR manifolds with $S^1$ action}, arXiv:1605.07507.

\bibitem{HL1} C.-Y. Hsiao and X. Li, \emph{Szeg\"{o} kernel asymptotics and Morse inequalities on CR manifolds with $S^1$ action}, arXiv:1502.02365.

\bibitem{HsiaoLi15}
C.-Y.~Hsiao and Xiaoshan Li,
\emph{Morse inequalities for Fourier components of Kohn-Rossi cohomology of CR manifolds with $S^1$-action}, 
Mathematische Zeitschrift, DOI: 10.1007/s00209-016-1661-6.


\bibitem{HLM} C.-Y. Hsiao, X. Li and G. Marinescu, \emph{On the stability of equivariant embedding of compact CR manifolds with circle action}, arXiv:1608.00893.

\bibitem{Ka79}
T.~ Kawasaki,
\emph{The Riemann-Roch theorem for complex V-manifolds}, Osaka J. Math., \textbf{16(1)}(1979), 151-159.


\bibitem{LY} B. Liu and J. Yu, \emph{On the anomaly formula for the Cappell-Miller holomorphic torsion}, Sci. China Math. \textbf{53}, (2010), no. 12, 3225-3241.

\bibitem{M} X. Ma, \emph{Orbifolds and analytic torsions}, Trans. Amer. Math. Soc. \textbf{357}, (2005), 2205--2233.

\bibitem{MM} X. Ma and G. Marinescu, \emph{Holomorphic Morse inequalities and Bergman kernels}, Progress in Mathematics, \textbf{254}, Birkh\"{a}user Verlag, Basel, (2007).

\bibitem{OV07}L. Ornea and M. Verbitsky, {\it Sasakian structures on CR-manifolds},
Geometriae Dedicata, \textbf{125}(1), (2007), 159-173.

\bibitem{P0} M. Puchol,  \emph{The Asymptotic of the Holomorphic Analytic Torsion Forms}, C. R. Math. Acad. Sci. Paris \textbf{354} (2016), no. 3, 301Ð306.
\bibitem{P} M. Puchol, \emph{The Asymptotic of the Holomorphic Analytic Torsion Forms}, arXiv:1511.04694.

\bibitem{PV}
P.-E.~Paradan and M.~Vergne, \emph{Index of transversally elliptic operators}, Ast\'erisque No. 328 (2009), 297--338. 


\bibitem{Qu} D. Quillen, \emph{Determinants of Cauchy-Riemann operators over a Riemann surface}, Funct. Anal. Appl. \textbf{19}, (1985), 31-34.


\bibitem{RS1} D. B. Ray and I. M. Singer, \emph{R-torsion and the Laplacian on Riemannian Manifolds,} Adv. in Math. \textbf{7} (1971), 145-210.

\bibitem{RS2} D. B. Ray and I. M. Singer, \emph{Analytic torsion for complex manifolds}, Ann. of Math. \textbf{98}, (1973), no.2 ,154-177. 

\bibitem{Sa} L. Satake, \emph{On a generalization of the notion of manifol}, Proc. Nat. Acad. Sci. U.S.A. \textbf{42}, (1956), 359-363.

\bibitem{S0} G. Su, \emph{Holomorphic $L^2$ torsion without determinant class condition}, Proc. Amer. Math. Soc. \textbf{143} (2015), no. 10, 4513-4524.

\bibitem{S} G. Su, \emph{The asymptotics for Cappell-Miller holomorphic torsion}, accepted by Manuscripta Math.

\end{thebibliography}

\end{document}